\documentclass[12pt,a4page,reqno]{amsart}
\usepackage{xcolor,amsfonts,amsthm,amsmath,amssymb,amscd,comment} 

\usepackage{etoolbox,booktabs}%
\apptocmd{\thebibliography}{\fontsize{11}{15}\selectfont}{}{}%
\usepackage{newtxtext}
\usepackage[final]{pdfpages}

\newtheorem{theorem}{Theorem}[section] % Theorems numbered within sections
\newtheorem{lemma}[theorem]{Lemma}     % Lemmas share the numbering with theorems
\newtheorem{remark}[theorem]{Remark}     % Lemmas share the numbering with theorems
\newtheorem{proposition}[theorem]{Proposition}

\usepackage[margin=2.2cm]{geometry}
\definecolor{mygray}{rgb}{0.7,0.7,0.7}
\definecolor{lightblue}{rgb}{0.22,0.45,0.70}
\usepackage[colorlinks=true,breaklinks=true,linkcolor=lightblue,citecolor=lightblue]{hyperref}
\numberwithin{equation}{section}
\numberwithin{figure}{section}
\numberwithin{table}{section}

\newcommand{\bu}{\boldsymbol{u}}

\newcommand{\bn}{\boldsymbol{n}}
\newcommand{\br}{\boldsymbol{r}}

\newcommand{\bx}{\boldsymbol{x}}
\newcommand{\cero}{\boldsymbol{0}}

\newcommand{\bxi}{\boldsymbol{\xi}}
\newcommand{\bzeta}{\boldsymbol{\zeta}}
\newcommand{\btau}{\boldsymbol{\tau}}

\newcommand{\vdiv}{\mathop{\mathrm{div}}\nolimits}

\newcommand{\bH}{\mathbf{H}}

\newcommand{\bL}{\mathbf{L}}

\newcommand{\bM}{\mathbf{M}}
\newcommand{\bV}{\mathbf{V}}

\newcommand{\rL}{\mathrm{L}}
\newcommand{\rQ}{\mathrm{Q}}

\newcommand{\rH}{\mathrm{H}}

\newcommand{\rW}{\mathrm{W}}

\newcommand\cA{\mathcal{A}}

\newcommand\cE{\mathcal{E}}

\newcommand\cQ{\mathcal{Q}_h}

\newcommand\cT{\mathcal{T}}

\usepackage{mathrsfs}
\newcommand\rqu{\mathsf{q}}
\newcommand{\mP}{\mathcal{P}}
\newcommand{\bB}{\mathbf{B}}

\newcommand{\bbR}{\mathbb{R}}
\newcommand{\ds}{\displaystyle}
\def\qan{{\quad\hbox{and}\quad}}
\newcommand{\rA}{\mathrm{A}}
\newcommand{\rP}{\mathrm{P}}

\def\bRT{\mathbf{RT}}

\newcommand{\norm}[1]{\ensuremath{\left\|#1\right\|}}

\allowdisplaybreaks

\begin{document}

\title[Perturbed saddle-point problems with non-regular data]{Perturbed saddle-point problems  in $\mathbf{L}^p$ 
%non-Hilbertian setting 
with 
%$\mathrm{H}^{-1}$ 
non-regular 
loads %and application to advection-diffusion-reaction equations in mixed form
}
\author[AlSohaim]{Abeer F. AlSohaim$^{1,2}$}
\email{abeer.alsohaim@monash.edu}
\author[F\"uhrer]{Thomas F\"uhrer$^{3}$}
\email{thfuhrer@uc.cl}
\author[Ruiz-Baier]{Ricardo Ruiz-Baier$^{1,4}$}
\email{ricardo.ruizbaier@monash.edu}
\author[Villa-Fuentes]{Segundo Villa-Fuentes$^{5}$}
\email{segundo.villafuentes@ulagos.cl}
\address{$^1$ School of Mathematics, Monash University, 9 Rainforest Walk, Clayton 3800 VIC,  Australia.}
\address{$^2$ Department of Mathematics and Statistics, College of Science, IMSIU (Imam Mohammad Ibn Saud Islamic University), Riyadh, Saudi Arabia.}
\address{$^3$ Facultad de Matem\'aticas, Pontificia Universidad Cat\'olica de Chile, Santiago, Chile.}
\address{$^4$ Universidad Adventista de Chile, Casilla 7-D, Chill\'an, Chile.} 
\address{$^5$ Departamento de Ciencias Exactas, Universidad de Los Lagos, Casilla 933, Osorno, Chile.}
\date{\today}

\begin{abstract}  
In this work, we develop the discrete solvability analysis for  perturbed saddle-point problems in Banach spaces with forcing terms regularised by means of a projector constructed using the adjoint of a weighted Cl\'ement quasi-interpolation. We take as driving example the linearised Poisson--Boltzmann (an advection-diffusion-reaction problem) in mixed form. We use perturbation arguments on the continuous and discrete levels and then derive a priori estimates that remain valid when the load that appears on the right-hand side of the "second" equation is in $\mathrm{H}^{-1}$. Further, we show a supercloseness result and {analyse convergence} of an adequate adaptation of Stenberg postprocessing for mixed advection equations with non-regular data. We provide numerical results that illustrate the convergence of the proposed scheme.  
\end{abstract}

\maketitle

%%%%%%%%%%%%%%%%%%%%%%%%%%%%%%%%%%%%%%%%%%%%%%%%%%%%%%%%%%%%%%%%%%%%%%%%%%%%%%%%%%%%%%%%%%%%%%%%%%%%%%%%%%%%%%%%%%%%%%%%%%%%

\section{Introduction}
We are interested in the mixed form of reaction-diffusion equations with singular sources. The main motivation comes from electrochemical flows, in models describing the distribution of electric potential in the direction normal to charged surfaces (see, for instance, \cite{Chen2007}).  In this case, the continuous problem does not possess the usual regularity for elliptic equations. Moreover, while the discrete problem can be defined, the error in the usual energy norms lacks sense, and the typical error estimates need to be modified accordingly. 

Relevant references regarding finite element discretisation include the works for primal formulations in \cite{scott1973finite,casas,koppl2014optimal,d2012finite,drelichman2020weighted} (see also \cite{ern2020quasi} for HHO schemes), as well as  saddle-point problems such as Stokes, Darcy, elasticity, and coupled flow-transport treated in, e.g.,  
\cite{bertoluzza2018local,allendes2024darcy,boon2023analysis}. 
%In \cite{alsohaim25}, a new formulation for the Stokes--Poisson--Boltzmann equations is developed using a fixed-point strategy under smallness assumptions on the data, together with a finite element approximation based on generalised Taylor--Hood spaces. 
More broadly, one way of designing well-defined methods for problems with loads in $\rH^{-1}$ is to construct quasi-interpolation operators on piecewise polynomial spaces (see, e.g., 
\cite{fuhrer2024quasiinterpolatorsapplicationpostprocessingfinite,millar2022projection}). 
%{[need more details on the bibliography]}

Here, and because of the structure of the model equations, the singular forcing appears as the right-hand side of the constraint equation in the saddle-point problem. Problems of this type have been recently addressed in \cite{fuhrer2024mixed} for the case of mixed Poisson, and also in \cite{fuhrer2022minres} for first-order least-squares formulations. To handle the singular force, following \cite{fuhrer2024mixed}  we replace it by a regularised version using a linear and bounded {projection} operator. 
{While~\cite{fuhrer2024mixed} considers projections onto piecewise constants only, here, we extend their construction to piecewise polynomials.}
Such a map is constructed based on the adjoint of Cl\'ement quasi-interpolators having second-order approximation properties from a piecewise projection on constants.  Moreover, to analyse the resulting mixed formulation in terms of the pseudo-potential flux and the potential, we rely on the theory developed in \cite{correa22}, together with the Banach--Ne\v{c}as--Babu\v{s}ka theorem (cf.\ \cite{ern04}). This framework allows us, by means of a global inf--sup condition combined with a smallness assumption on the advecting velocity field, to prove the well-posedness of the problem at the continuous level. For the finite element approximation, we  use Raviart--Thomas spaces and, exploiting their inf--sup stability, we can appeal to the discrete perturbed saddle-point theory from   \cite{correa22}. 
%The scheme defined with the regularisation is shown optimally convergent, whereas its non-regularised counterpart is not. 
{Solutions to the regularised scheme are shown to satisfy a perturbed quasi-optimally result in Banach spaces where the perturbation includes the approximation error of the advection field and the best-approximation of a lower-order term.}

We also {consider accuracy enhancement by a} Stenberg-type postprocessing \cite{stenberg1991postprocessing, chen1995lp}, whose  analysis in the present context requires a reformulation of the typical Aubin--Nitsche duality arguments including additional consistency terms which we control with data smallness assumptions and properties of the proposed smoother. The postprocessing {technique} requires higher regularity to achieve optimal convergence \cite{stenberg1991postprocessing}.

To the best of the authors' knowledge, the present paper is the first work addressing the analysis of low-regularity forcing terms for advection-diffusion-reaction in the non-Hilbertian setting and in the presence of a lower-diagonal block perturbation. 

%%%%%%%%%%%%%%%%%%%%%%%%%%%%%%%%%%%%%%%

The contents of the paper have been organised as follows. Section~\ref{sec:cont} presents the strong form of the modified Poisson--Boltzmann equations including a load regularisation that turns the continuous problem depending on the mesh. We derive a weak formulation in Banach spaces, and establish the well-posedness using Banach--Ne\v{c}as--Babu\v{s}ka theorem, data smallness assumptions, and the theory of perturbed saddle-point problems. In Section~\ref{sec:fe} we construct a mixed finite element scheme using conforming Raviart--Thomas elements and show discrete inf-sup conditions by means of a quasi-interpolator. We prove the unique solvability of the discrete problem following a similar structure to the continuous case. Next, Section~\ref{sec:strang} briefly addresses C\'ea estimates, and we derive precise a priori error estimates with respect to the original solution of the linearised Poisson--Boltzmann equation. We also present a postprocess that allows us to achieve higher order approximation of the double layer potential (the primal unknown), and show its properties in Section \ref{sec:stenbergL2}. Section~\ref{sec:results} has a collection of numerical results that highlight the convergence properties and suitability of the proposed formulation. Finally, in Section~\ref{sec:concl} we close with a brief discussion and comment on ongoing extensions of this work. 

%%%%%%%%%%%%%%%%%%%%%%%%%%%%%%%%%%%%

\section{Continuous setting}\label{sec:cont}

Let us consider a Lipschitz and simply connected bounded domain in $\mathbb{R}^n$, $n=2,3$ with boundary $\partial\Omega = \overline{\Gamma_D} \cup \overline{\Gamma_N}$ disjointly split into two parts where different types of boundary conditions will be considered (we assume that both sub-boundaries {have positive measure}), and denote by $\bn$ the outward unit normal vector on the boundary.

We recall the standard notation for Lebesgue spaces $\rL^t(\Omega)$, $t \in (1,+\infty)$,
with norm $\|\bullet\|_{0,t;\Omega}$, and for Sobolev spaces $\rH^s(\Omega)$, $s \ge 0$, endowed 
with the norm $\norm{\cdot}_{s,\Omega}$ and seminorm $|\bullet|_{s,\Omega}$.  In
particular, ${\rH}^{1/2}(\partial\Omega)$ stands for the space of traces
of functions of $\rH^1(\Omega)$ and ${\rH}^{-1/2}(\partial\Omega)$
denotes its dual (similarly for subsets of the boundary). We recall from \cite[Section 2.4.2]{gatica14} the definition of the space $\rH^{1/2}_{00}(\Gamma_D) = \{\eta \in \rH^{1/2}(\Gamma_D): E_{00}(\eta) \in \rH^{1/2}(\partial\Omega)\}$ supplied with the norm  \[ \|\bullet\|_{1/2,00,\Gamma_D} := \| E_{00}(\bullet)\|_{1/2,\partial\Omega},\]
where 
$E_{00}:\rH^{1/2}(\Gamma_D)\to \rH^{1/2}(\partial\Omega)$ denotes the extension-by-zero operator 
\[E_{00}(\eta)= \begin{cases}\eta & \text{on $\Gamma_D$},\\ 0 & \text{on $\partial\Omega\setminus\Gamma_D$},\end{cases} \qquad \forall \eta \in \rH^{1/2}(\Gamma_D).\]
Furthermore, the restriction of $\psi\in \rH^{-1/2}(\partial\Omega)$ to $\Gamma_D$, defined as 
\[ \langle \psi|_{\Gamma_D},\eta \rangle_{\Gamma_D} = \langle\psi,E_{00}(\eta)\rangle_{\partial\Omega} \quad \forall \eta \in \rH^{1/2}_{00}(\Gamma_D),\]
belongs to the dual space, denoted as $[\rH^{1/2}_{00}(\Gamma_D)]'=:\rH^{-1/2}_{00}(\Gamma_D)$. 
Its norm is 
\begin{equation}\label{eq:norm-12}
\|\psi|_{\Gamma_D}\|_{-1/2,00,\Gamma_D} 
= \sup_{0\neq \eta\in \rH_{00}^{1/2}(\Gamma_D)}\frac{\langle \psi,E_{00}(\eta)\rangle_{\partial\Omega} }{\|E_{00}(\eta)\|_{1/2,\partial\Omega}}.\end{equation}
Finally, for any pair of normed spaces $(X, \|\bullet\|_{X})$ and  
$(Y, \|\bullet \|_{Y})$, we provide the product space $X \times Y$ with the  
natural norm $\| (x,y) \|_{X \times Y}:=\|x\|_{X}+\|y\|_{Y}$  
for all $(x,y)\in X \times Y$. 

For a Dirichlet datum $\psi_D \in \rH^{1/2}_{00}(\Gamma_D) $,  the motivating problem under consideration is the nonlinear Poisson--Boltzmann equation \cite{Chen2007}, 
\begin{subequations} \label{eq:PBstrong:primal}
    \begin{align}
        \kappa\psi  - \vdiv(\varepsilon\nabla \psi-\bu\psi) &= g && \text{in } \Omega, \label{eq:primal} \\
        \psi &= \psi_{D} && \text{on } \Gamma_D, \label{eq:bc1:primal} \\
        ( \varepsilon\nabla \psi-\bu\psi) \cdot \bn &= 0 && \text{on } \Gamma_N, \label{eq:bc2:primal}
    \end{align}
\end{subequations}
including a fixed arbitrary advecting velocity field  $\bu \in \bL^4(\Omega)$. This particular regularity comes from the specific functional structure of the Navier--Stokes equations in  mixed form from \cite{correa24}, aimed at coupling with the Poisson--Boltzmann equation as in \cite{alsohaim25}. 
The permittivity coefficient $\varepsilon$ is heterogeneous and bounded away from zero, the  reaction function $\kappa$ depends nonlinearly on the potential and it is monotone and unbounded, and the right-hand side source $g$ is a sum of Dirac measures concentrated at points, and line sources (when $d=2$).  
However, and as a preliminary step, we consider that $\kappa\in \rL^2(\Omega)$ is heterogeneous but independent of $\psi$, and that {$g \in \widetilde\rH^{-1}(\Omega):=(\rH^1(\Omega))'$}. 

{In the work at hand we are therefore interested in a mixed formulation of the linearised Poisson--Boltzmann equation} in terms of the \emph{double layer potential} $\psi$ and the \emph{pseudo potential flux} $\bzeta$. This gives  
\begin{subequations} \label{eq:PBstrong}
    \begin{align}
        \bzeta &= \varepsilon \nabla \psi - \bu \psi && \text{in } \Omega, \label{eq:PB1} \\
        \kappa\psi  - \vdiv\bzeta &= g && \text{in } \Omega, \label{eq:PB2} \\
        \psi &= \psi_{D} && \text{on } \Gamma_D, \label{eq:bc1} \\
        \bzeta \cdot \bn &= 0 && \text{on } \Gamma_N. \label{eq:bc2}
    \end{align}
\end{subequations}

\subsection{{Analysis of the primal formulation}}
%{[maybe we should write \eqref{eq:PBstrong:primal} before writing the mixed form \eqref{eq:PBstrong}, and this subsection can be called "Analysis of the primal formulation"?]} 
%Let us consider the system~\eqref{eq:PBstrong} and define the notion of a solution to this system. 
%Equation~\eqref{eq:PB1} is understood in $\rL^2(\Omega)$, whereas the second equation~\eqref{eq:PB2} is understood in the distributional sense.
%Eliminating $\bzeta$ from~\eqref{eq:PBstrong} yields the primal system
%Furthermore,~\eqref{eq:bc2:primal} is understood as the distributional conormal derivative, see~\cite[Lemma~4.3]{McLean}.

{The weak formulation of~\eqref{eq:primal}} reads as follows: Find $\psi\in \rH^1_D(\Omega)$ such that
\begin{align}\label{eq:primal:weak}
\begin{split}
  \int_\Omega (\kappa \psi \varphi + \varepsilon\nabla\psi\cdot\nabla\varphi - \bu\psi\cdot\nabla \varphi) &= \int_\Omega g\varphi 
  \quad\forall \varphi \in \rH^1_D(\Omega), \quad
  \psi|_{\Gamma_D} = \psi_D.
\end{split}
\end{align}

Throughout, we make the standing assumption that
{$\varepsilon\in \rL^\infty(\Omega)$, $0<\underline\varepsilon\leq \varepsilon\leq \overline\varepsilon :=\|\varepsilon\|_{0,\infty;\Omega}$, $\kappa\in \rL^2(\Omega)$, $\kappa\geq 0$ (a.e. in $\Omega$) and that}
the norm of $\bu$ is sufficiently small to guarantee existence and uniqueness of solutions, i.e., 
\begin{align}\label{eq:assumption:u}
  \|\bu\|_{0,4;\Omega} < \frac{\underline\varepsilon}{C_\mathrm{Sob}{(1+c_P^2)^{1/2}}},
\end{align}
where {$c_P>0$ denotes Poincar\'e's constant and} $C_\mathrm{Sob}>0$ denotes the continuity constant of the Sobolev embedding $\rH^1_D(\Omega) \to\rL^4(\Omega)$, i.e., the smallest {constants} such that 
\begin{align*}
  {\|\varphi\|_{0,2;\Omega}\leq c_P \|\nabla \varphi\|_{0,2;\Omega},} 
  \quad\|\varphi\|_{0,4;\Omega} \leq C_\mathrm{Sob} {\|\varphi\|_{1,2;\Omega}}
\end{align*}
holds for all $\varphi\in \rH^1_D(\Omega)$.
{We stress that assumption~\eqref{eq:assumption:u} implies that there exists $\alpha\in(0,1)$ such that
  \begin{align*}
    \|\bu\|_{0,4;\Omega} \leq \alpha \frac{\underline\varepsilon}{C_\mathrm{Sob}{(1+c_P^2)^{1/2}}}.
  \end{align*}
  This observation is used in the proof of several results below.} 
  In the following, we use notation $\rH^{-1}_D(\Omega) = (\rH^1_D(\Omega))'$.
And note that any $g\in \widetilde\rH^{-1}(\Omega)$ satisfies
\begin{align*}
  \|g\|_{\rH^{-1}_D(\Omega)} := \sup_{0\neq v\in \rH^1_D(\Omega)} \frac{\langle g,v\rangle}{\|v\|_{1,2;\Omega}}
  \leq \sup_{0\neq v\in \rH^1(\Omega)} \frac{\langle g,v\rangle}{\|v\|_{1,2;\Omega}} =: \|g\|_{\widetilde\rH^{-1}(\Omega)}
\end{align*}
with $\langle g,v\rangle$ denoting the duality pairing on $\widetilde\rH^{-1}(\Omega)\times \rH^1(\Omega)$.

{Using the Sobolev embedding we note that, for $\varphi\in \rL^{4/3}(\Omega)$, 
\begin{align}\label{eq:dualAndSobolevEstimate}
  \|\varphi\|_{\rH^{-1}_D(\Omega)} = \sup_{0\neq v\in \rH^1_D(\Omega)} \frac{\langle \varphi,v\rangle}{\|v\|_{1,2;\Omega}}
  \leq \sup_{0\neq v\in \rH^1_D(\Omega)} \frac{\|\varphi\|_{0,4/3;\Omega}\|v\|_{0,4;\Omega}}{\|v\|_{1,2;\Omega}}
  \leq C_\mathrm{Sob} \|\varphi\|_{0,4/3;\Omega}.
\end{align}}

\begin{proposition}\label{prop:primal}
  There exists a constant $C>0$ such that for any $g\in \widetilde\rH^{-1}(\Omega)$ and $\psi_D\in \rH^{1/2}_{00}(\Gamma_D)$, Problem~\eqref{eq:PBstrong:primal} admits a unique solution $\psi\in \rH^1_D(\Omega)$ and
  \begin{align*}
    \|\psi\|_{1,2;\Omega} \leq C(\|\psi_D\|_{1/2,00;\Gamma_D} + \|g\|_{\rH^{-1}_D(\Omega)}).
  \end{align*}
\end{proposition}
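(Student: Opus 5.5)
We will reduce to homogeneous Dirichlet data by a lifting and then apply the Lax--Milgram lemma on $\rH^1_D(\Omega)$. Here $\rH^1_D(\Omega)$ is the closed subspace of $\rH^1(\Omega)$ of functions vanishing on $\Gamma_D$. The solution space in the statement is read as $\{\psi\in\rH^1(\Omega):\psi|_{\Gamma_D}=\psi_D\}$, as in \eqref{eq:primal:weak}.

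\textbf{Step 1 (lifting).} Since $E_{00}(\psi_D)\in\rH^{1/2}(\partial\Omega)$, the right inverse of the trace operator provides $\psi_0\in\rH^1(\Omega)$ with $\psi_0|_{\partial\Omega}=E_{00}(\psi_D)$ and
\[
\|\psi_0\|_{1,2;\Omega}\le C_{\mathrm{tr}}\|\psi_D\|_{1/2,00;\Gamma_D}.
\]
We write $\psi=\psi_0+\tilde\psi$ with $\tilde\psi\in\rH^1_D(\Omega)$. Then $\tilde\psi$ must satisfy
\[
a(\tilde\psi,\varphi)=\langle g,\varphi\rangle-a(\psi_0,\varphi)\quad\forall\varphi\in\rH^1_D(\Omega),
\]
where $a(\phi,\varphi):=\int_\Omega(\kappa\phi\varphi+\varepsilon\nabla\phi\cdot\nabla\varphi-\bu\phi\cdot\nabla\varphi)$.

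\textbf{Step 2 (continuity).} Each term of $a$ is bounded on $\rH^1(\Omega)\times\rH^1(\Omega)$ by Hölder's inequality and Sobolev embeddings.
\begin{itemize}
\item The reaction term satisfies $\int\kappa\phi\varphi\le\|\kappa\|_{0,2;\Omega}\|\phi\|_{0,4;\Omega}\|\varphi\|_{0,4;\Omega}$, using $\rH^1\hookrightarrow\rL^4$ for $n\le3$.
\item The diffusion term is bounded by $\overline\varepsilon\|\nabla\phi\|_{0,2;\Omega}\|\nabla\varphi\|_{0,2;\Omega}$.
\item The advection term satisfies $\int\bu\phi\cdot\nabla\varphi\le\|\bu\|_{0,4;\Omega}\|\phi\|_{0,4;\Omega}\|\nabla\varphi\|_{0,2;\Omega}$.
\end{itemize}
Hence the right-hand side is a bounded functional on $\rH^1_D(\Omega)$, with norm at most $\|g\|_{\rH^{-1}_D(\Omega)}+C\|\psi_0\|_{1,2;\Omega}$.

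\textbf{Step 3 (coercivity on $\rH^1_D(\Omega)$; main step).} For $\varphi\in\rH^1_D(\Omega)$ we drop the reaction term, since $\kappa\ge0$. This gives
\[
a(\varphi,\varphi)\ge\underline\varepsilon\|\nabla\varphi\|_{0,2;\Omega}^2-\|\bu\|_{0,4;\Omega}\|\varphi\|_{0,4;\Omega}\|\nabla\varphi\|_{0,2;\Omega}.
\]
Poincaré's inequality gives $\|\varphi\|_{1,2;\Omega}\le(1+c_P^2)^{1/2}\|\nabla\varphi\|_{0,2;\Omega}$. Combined with the embedding constant, this yields
\[
\|\varphi\|_{0,4;\Omega}\le C_\mathrm{Sob}(1+c_P^2)^{1/2}\|\nabla\varphi\|_{0,2;\Omega}.
\]
By \eqref{eq:assumption:u} with the factor $\alpha\in(0,1)$, the advection term is therefore at most $\alpha\underline\varepsilon\|\nabla\varphi\|_{0,2;\Omega}^2$. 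Consequently
\[
a(\varphi,\varphi)\ge(1-\alpha)\underline\varepsilon\|\nabla\varphi\|_{0,2;\Omega}^2\ge\frac{(1-\alpha)\underline\varepsilon}{1+c_P^2}\|\varphi\|_{1,2;\Omega}^2.
\]
This step is where the smallness of $\bu$ enters; the non-symmetric advection term is the only genuine obstacle, since no integration by parts is available for a merely $\rL^4$, non-solenoidal $\bu$.

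\textbf{Step 4 (conclusion).} Lax--Milgram yields a unique $\tilde\psi\in\rH^1_D(\Omega)$ with
\[
\|\tilde\psi\|_{1,2;\Omega}\le\frac{1+c_P^2}{(1-\alpha)\underline\varepsilon}\big(\|g\|_{\rH^{-1}_D(\Omega)}+C\|\psi_0\|_{1,2;\Omega}\big).
\]
The triangle inequality and the bound on $\psi_0$ from Step 1 then give the stated estimate.

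Uniqueness of $\psi$ does not depend on the choice of lifting. Indeed, the difference of two solutions lies in $\rH^1_D(\Omega)$ and solves the homogeneous problem, so it vanishes by the coercivity of Step 3.
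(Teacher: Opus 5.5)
Your proposal is correct and follows essentially the same route as the paper. In both, the form is shown bounded, and coercive on $\rH^1_D(\Omega)$ by dropping the reaction term ($\kappa\geq 0$) and absorbing the advection term via H\"older, the Sobolev embedding, Poincar\'e's inequality and the smallness of $\|\bu\|_{0,4;\Omega}$. The paper then invokes McLean's Theorem~4.10, a lifting-plus-Lax--Milgram result, where you carry out the lifting and Lax--Milgram step explicitly. Doing it by hand has a small payoff: the sharper right-hand side $\|g\|_{\rH^{-1}_D(\Omega)}$, rather than $\|g\|_{\widetilde\rH^{-1}(\Omega)}$, falls out directly in your Step~4, whereas the paper only asserts that this refinement can be obtained.
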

\begin{proof}
  By~\cite[Theorem~4.10]{McLean} it remains to show that the bilinear form corresponding to the variational formulation~\eqref{eq:primal:weak} is coercive and bounded. 
  We further note that~\cite{McLean} states the bound with $\|g\|_{\widetilde\rH^{-1}(\Omega)}$ on the right-hand side. This can be improved to the present bound by following the very same arguments in the proof which are therefore omitted. 

  Let $\psi,\varphi\in \rH^1_D(\Omega)$ be given, and note that 
  \begin{align*}
    \int_\Omega |\kappa \psi \varphi + \varepsilon\nabla\psi\cdot\nabla\varphi - \bu\psi\cdot\nabla \varphi| \leq C(1+\|\bu\|_{0,4;\Omega}) \|\psi\|_{1,2;\Omega}\|\varphi\|_{1,2;\Omega},
  \end{align*}
  where $C>0$ is some generic constant. 
  Further, by our assumption on the norm of $\bu$, {there exists $\alpha\in (0,1)$ such that}
  \begin{align*}
    -\int_\Omega |\bu\psi\cdot\nabla \psi| \geq -\|\bu\|_{0,4;\Omega}\|\psi\|_{0,4;\Omega}\|\nabla\psi\|_{0,2;\Omega}
    \geq-\alpha\underline\varepsilon \|\nabla\psi\|_{0,2;\Omega}^2.
  \end{align*}
  Then, {using $\kappa\geq 0$}, we arrive at 
  \begin{align*}
    \int_\Omega (\kappa \psi^2 + \varepsilon|\nabla\psi|^2 - \bu\psi\cdot\nabla \psi) \geq (1-\alpha)\underline\varepsilon\|\nabla \psi\|_{0,2;\Omega}^2 + \int_\Omega \kappa \psi^2 \geq (1-\alpha)\underline\varepsilon\|\nabla \psi\|_{0,2;\Omega}^2,
  \end{align*}
  thus concluding the proof.
\end{proof}

%{TF: It should be possible to extend the last result to functionals $g\in \rH_D^{-1}(\Omega) = (\rH^{1}_D(\Omega))'$, but we would need to do the proof ourselves.}
{We remark that the assumption $\kappa\geq 0$ can be replaced by assuming that $\|\kappa\|_{0,2;\Omega}$ is sufficiently small.}

\subsection{{Notation and approximation spaces for mixed formulations}}
Before we define and analyse the a regularised problem we need to introduce some spaces and notation.
In view of the functional structure of the weak formulation associated with \eqref{eq:PBstrong-star} (dictated by the integrability of each term, in particular the one coming from the last term on the right-hand side of the constitutive equation \eqref{eq:PB1-star}), we consider the following spaces for the pseudo potential flux and double layer potential, respectively 
\[
\bH:= \bH_N(\vdiv_{4/3};\Omega) = \{ \bxi \in\bL^2(\Omega):\vdiv\bxi \in \rL^{4/3}(\Omega) \ \text{and}\ \bxi\cdot \bn=0 \ \text{on}\ \Gamma_N \}, \quad \rQ := \rL^4(\Omega). 
\]

Let $\cT_h$ denote a family of non-degenerate triangular / tetrahedral meshes on $\Omega$ and denote by $\cE_h$ the set of all facets (edges in 2D) in the mesh. 
By $h_K$ we denote the diameter of the element $K$ and by $h_F$ we denote the length/area of the facet $F$. As usual, by $h$ we denote the maximum of the diameters of elements in $\cT_h$. For all meshes we assume that they are sufficiently regular (there exists a uniform positive constant $\eta_1$ such that each element $K$ is star-shaped with respect to a ball of radius greater than $\eta_1 h_K$. It is also assumed that there exists $\eta_2>0$ such that for each element and every facet $F\in \partial K$, we have that $h_F\geq \eta_2 h_K$, see, e.g., \cite{ern04}). 

By $\rP_k(K)$  we will denote the scalar space of degree up to $k$, defined locally on $K\in \cT_h$. In addition, we denote by $\bRT_k(K):=[\rP_k(K)]^d\oplus \rP_k(K)\,\bx$ the local Raviart--Thomas space and by $\bRT_k(\cT_h)$ its global counterpart. 

Spaces $\bH_h$ and $\rQ_h$ are suitable subspaces of the functional spaces $\bH$ and $\rQ$, respectively.
{For the analysis of the discretised mixed problem below}, we require kernel characterisation and discrete inf-sup conditions and so we can simply take, for example, Raviart--Thomas elements  
\[\bH_h = \bRT_k(\cT_h)\cap \bH = \{ \bxi_h \in \bH_N(\vdiv_{4/3};\Omega): \bxi_h|_K \in \bRT_k(K),\ \forall\, K\in \cT_h\}, \quad \rQ_h = \rP_k(\cT_h). \]
{Note that $\vdiv(\bH_h)\subset\rQ_h$ which is a property that will be used in the analysis.
In particular, we even have that $\vdiv(\bH_h)=\rQ_h$.
We denote by $\mP^k_h\colon \rL^2(\Omega)\to \rQ_h\subset \rL^2(\Omega)$ the $\rL^2(\Omega)$ orthogonal projection.}

\subsection{The perturbed regularised primal problem}
In order to deal with the singular force $g\in \widetilde\rH^{-1}(\Omega)$ we replace it by a regularised one, $\cQ g$, where
$\cQ\colon \rH^{-1}_D(\Omega)\to \rQ_h$ denotes a bounded linear operator with the properties
\begin{equation}\label{eq:prop0-Q}
  \cQ^2 = \cQ, \quad \|\cQ\|_{\rH^{-1}_D(\Omega)\to \rH^{-1}_D(\Omega)} + \|\cQ\|_{\rL^2(\Omega)\to \rL^2(\Omega)} \leq C_{\cQ}<\infty.
\end{equation}
{Additionally, $\bu$ is replaced} by an approximation {$\bu_h\in\rL^4(\Omega)$}.
This yields an auxiliary problem which in its (primal) weak form reads as follows: Find $\psi^\star\in \rH^1(\Omega)$ such that
\begin{align}\label{eq:aux:weak}
  \int_\Omega (\kappa \psi^\star \varphi + \varepsilon\nabla\psi^\star\cdot\nabla\varphi - \bu_h\psi^\star\cdot\nabla \varphi) &= \int_\Omega \cQ g\varphi 
  \quad\forall \varphi \in \rH^1_D(\Omega), \qquad
  \psi^\star|_{\Gamma_D} = \psi_D.
\end{align}
Existence and uniqueness of solutions follows as in Proposition~\ref{prop:primal} assuming that
\begin{align}\label{eq:assumption:uh}
  \|\bu_h\|_{0,4;\Omega} < \frac{\underline\varepsilon}{C_\mathrm{Sob}{(1+c_P^2)^{1/2}}}.
\end{align}
Note that if $\bu_h\to \bu$ {in the sense of $\rL^4(\Omega)$ when $h\to 0$}, it follows that there exists a sufficiently small $h_0$ such that~\eqref{eq:assumption:uh} is satisfied for $h\leq h_0$ provided that assumption~\eqref{eq:assumption:u} holds. 

\begin{lemma}\label{lem:quasiopt:primal}
  Let $\psi$ and $\psi^\star$ denote the unique solutions of Problems~\eqref{eq:primal:weak} and~\eqref{eq:aux:weak}, respectively.
  Then, there exist constants $C_1>0$, $C_2>0$ such that
  \begin{align*}
    \|\psi-\psi^\star\|_{1,2;\Omega} &\leq C_1(\|g\|_{\rH^{-1}_D(\Omega)}+\|\psi_D\|_{1/2,00;\Gamma_D})\|\bu-\bu_h\|_{0,4;\Omega}
  \\
  &\qquad + C_2(\min_{\btau_h\in\bH_h}\|\bzeta-\btau_h\|_{0,2;\Omega} + {\min_{\chi_h\in\rQ_h}\|\kappa\psi-\chi_h\|_{\rH^{-1}_D(\Omega)}}),
  \end{align*}
  where $\bzeta = \varepsilon\nabla \psi - \bu \psi $, and, with $\bzeta^\star = \varepsilon\nabla\psi^\star-\bu_h\psi^\star$,
  \begin{align*}
    \|\psi-\psi^\star\|_{0,4;\Omega} + \|\bzeta-\bzeta^\star\|_{0,2;\Omega} &\leq C_1(\|g\|_{\rH^{-1}_D(\Omega)}+\|\psi_D\|_{1/2,00;\Gamma_D})\|\bu-\bu_h\|_{0,4;\Omega}
  \\
  &\qquad + C_2(\min_{\btau_h\in\bH_h}\|\bzeta-\btau_h\|_{0,2;\Omega} + {\min_{\chi_h\in\rQ_h}\|\kappa\psi-\chi_h\|_{\rH^{-1}_D(\Omega)}}).
  \end{align*}
\end{lemma}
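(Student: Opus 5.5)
The plan is to compare the two weak problems directly through the error $e:=\psi-\psi^\star$. Since both $\psi$ and $\psi^\star$ have trace $\psi_D$ on $\Gamma_D$, we have $e\in \rH^1_D(\Omega)$. Write $a_h(\cdot,\cdot)$ for the bilinear form of~\eqref{eq:aux:weak} (with $\bu_h$). Subtracting~\eqref{eq:aux:weak} from~\eqref{eq:primal:weak}, after adding and subtracting $\int_\Omega \bu_h\psi\cdot\nabla\varphi$, gives for all $\varphi\in \rH^1_D(\Omega)$
\begin{align*}
  a_h(e,\varphi) = \langle g-\cQ g,\varphi\rangle + \int_\Omega (\bu-\bu_h)\psi\cdot\nabla\varphi .
\end{align*}
Under~\eqref{eq:assumption:uh}, the argument in the proof of Proposition~\ref{prop:primal} shows that $a_h$ is coercive on $\rH^1_D(\Omega)$ with constant $(1-\alpha)\underline\varepsilon$ (up to Poincar\'e). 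Taking $\varphi=e$ therefore yields
\begin{align*}
  \|e\|_{1,2;\Omega}\lesssim \|g-\cQ g\|_{\rH^{-1}_D(\Omega)} + \|\bu-\bu_h\|_{0,4;\Omega}\|\psi\|_{0,4;\Omega}.
\end{align*}
For the advection term I use $\|\psi\|_{0,4;\Omega}\le C_\mathrm{Sob}\|\psi\|_{1,2;\Omega}$ and then the stability bound of Proposition~\ref{prop:primal}. This produces the term $C_1(\|g\|_{\rH^{-1}_D(\Omega)}+\|\psi_D\|_{1/2,00;\Gamma_D})\|\bu-\bu_h\|_{0,4;\Omega}$.

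The main step is to bound the consistency error $\|(I-\cQ)g\|_{\rH^{-1}_D(\Omega)}$ by best approximations. I read the primal equation as an identity in $\rH^{-1}_D(\Omega)$: with $\bzeta=\varepsilon\nabla\psi-\bu\psi\in\bL^2(\Omega)$, we have $\langle g,\varphi\rangle=\int_\Omega\kappa\psi\varphi+\int_\Omega\bzeta\cdot\nabla\varphi$. In other words $g=\kappa\psi-\vdiv\bzeta$, where for any $\boldsymbol\omega\in\bL^2(\Omega)$ the functional $\vdiv\boldsymbol\omega$ is defined on $\rH^1_D(\Omega)$ by $-\int_\Omega\boldsymbol\omega\cdot\nabla\varphi$. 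For $\btau_h\in\bH_h$ this definition agrees with the $\rL^2$ function $\vdiv\btau_h$, by integration by parts, since $\btau_h\cdot\bn=0$ on $\Gamma_N$ and $\varphi=0$ on $\Gamma_D$.

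Now take arbitrary $\chi_h\in\rQ_h$ and $\btau_h\in\bH_h$. Then $\chi_h-\vdiv\btau_h\in\rQ_h$, because $\vdiv(\bH_h)\subset\rQ_h$. Since $\cQ$ is a projection onto $\rQ_h$, it fixes this element. This is the point to check carefully: $\cQ^2=\cQ$ together with $\cQ$ mapping onto $\rQ_h$ must mean that $\cQ$ acts as the identity on $\rQ_h\subset \rL^2(\Omega)\hookrightarrow \rH^{-1}_D(\Omega)$. If only the range inclusion is available, I would assume that range equals $\rQ_h$, as the construction via adjoint Cl\'ement operators should give. Hence
\begin{align*}
  (I-\cQ)g = (I-\cQ)(\kappa\psi-\chi_h) - (I-\cQ)\vdiv(\bzeta-\btau_h).
\end{align*}
Using~\eqref{eq:prop0-Q} and $\|\vdiv\boldsymbol\omega\|_{\rH^{-1}_D(\Omega)}\le\|\boldsymbol\omega\|_{0,2;\Omega}$, this gives $\|(I-\cQ)g\|_{\rH^{-1}_D}\le(1+C_\cQ)\big(\|\kappa\psi-\chi_h\|_{\rH^{-1}_D}+\|\bzeta-\btau_h\|_{0,2}\big)$. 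Minimising over $\chi_h$ and $\btau_h$ proves the first estimate. The minima exist because $\rQ_h$ and $\bH_h$ are finite-dimensional.

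For the second estimate, the Sobolev embedding gives $\|e\|_{0,4;\Omega}\le C_\mathrm{Sob}\|e\|_{1,2;\Omega}$. For the flux, I write
\begin{align*}
  \bzeta-\bzeta^\star=\varepsilon\nabla e-\bu_h e-(\bu-\bu_h)\psi .
\end{align*}
By H\"older's inequality its $\bL^2$ norm is at most $\overline\varepsilon\|\nabla e\|_{0,2}+\|\bu_h\|_{0,4}\|e\|_{0,4}+\|\bu-\bu_h\|_{0,4}\|\psi\|_{0,4}$. Here $\|\bu_h\|_{0,4}$ is uniformly bounded by~\eqref{eq:assumption:uh}, and $\|\psi\|_{0,4}$ is again controlled through Proposition~\ref{prop:primal}. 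Inserting the first estimate then concludes the proof, with suitably enlarged constants $C_1$ and $C_2$.
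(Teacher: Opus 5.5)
Your proof is correct, and it takes a more direct route than the paper's.

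\textbf{The paper's route.} The paper inserts an intermediate solution $\widetilde\psi^\star$, which uses the advection field $\bu_h$ but the unregularised load $g$. It bounds $\psi-\widetilde\psi^\star$ by the $\|\bu-\bu_h\|_{0,4;\Omega}$ term via coercivity and Young's inequality, and bounds $\widetilde\psi^\star-\psi^\star$ by $\|(1-\cQ)g\|_{\rH^{-1}_D(\Omega)}$. It then estimates this consistency error through $g=\kappa\widetilde\psi^\star-\vdiv\widetilde\bzeta^\star$. This produces best-approximation errors of $\widetilde\bzeta^\star$ and $\kappa\widetilde\psi^\star$, which must be transferred back to $\bzeta$ and $\kappa\psi$ by triangle inequalities. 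That transfer creates the extra terms $\|\bzeta-\widetilde\bzeta^\star\|_{0,2;\Omega}$ and $\|\kappa\|_{0,2;\Omega}\|\psi-\widetilde\psi^\star\|_{1,2;\Omega}$, which are again absorbed into the advection-perturbation term.

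\textbf{Your route.} You subtract the two problems at once, so both perturbations enter a single error equation for $e=\psi-\psi^\star$. You then write $g=\kappa\psi-\vdiv\bzeta$ in $\rH^{-1}_D(\Omega)$ with the \emph{exact} pair, so the best approximations of $\bzeta$ and $\kappa\psi$ appear immediately. The transfer step disappears, the argument is shorter, and it also avoids the small inconsistency in the paper's proof, where $g$ is written once with $\kappa\psi^\star$ and once with $\kappa\widetilde\psi^\star$.

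\textbf{What the paper's splitting buys.} It isolates the pure regularisation error $\widetilde\psi^\star-\psi^\star$, which solves the $\bu_h$-problem with homogeneous Dirichlet data and right-hand side $(1-\cQ)g$ only. This is the object reused in the Aubin--Nitsche argument of Lemma~\ref{lem:adjoint:continuous}, where $(1-\cQ)^2=1-\cQ$ gives the factor $h^{s_\Omega}$. Your error equation would also work there, since the additional term $\int_\Omega(\bu-\bu_h)\psi\cdot\nabla\phi$ is harmless. Your identity $g=\kappa\psi-\vdiv\bzeta$ would likewise streamline the bound on $\|(1-\cQ)g\|_{\rH^{-1}_D(\Omega)}$ in that lemma.

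\textbf{The point you flagged.} The paper relies on exactly the same fact, that $\cQ$ acts as the identity on $\rQ_h$, when it asserts $(1-\cQ)\vdiv\btau_h=0$. It verifies this for the concrete construction in Lemma~\ref{lem:prop-Q}, so your assumption matches the paper's setting.
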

\begin{proof}
  Let us start by first considering the perturbation in $\bu$. Let $\widetilde\psi^\star\in \rH^1(\Omega)$ denote the unique solution to
\begin{align*}
  \int_\Omega (\kappa \widetilde\psi^\star \varphi + \varepsilon\nabla\widetilde\psi^\star\cdot\nabla\varphi - \bu_h\widetilde\psi^\star\cdot\nabla \varphi) &= \int_\Omega g\varphi
  \quad\forall \varphi \in \rH^1_D(\Omega), \qquad
  \widetilde\psi^\star|_{\Gamma_D} = \psi_D.
\end{align*}
Set $\psi_d = \psi-\widetilde\psi^\star \in \rH^1_D(\Omega)$. Then, 
\begin{align*}
  \int_\Omega(\kappa \psi_d^2 + \varepsilon|\nabla \psi_d|^2) &= \int_\Omega(\bu\psi-\bu_h\widetilde\psi^\star)\cdot\nabla \psi_d
  = \int_\Omega(\bu-\bu_h)\psi\cdot\nabla\psi_d + \int_\Omega \bu_h\psi_d\cdot\nabla \psi_d.
\end{align*}
Noting that $\|(\bu-\bu_h)\psi\cdot\nabla\psi_d\|_{0,1;\Omega} \leq \|\bu-\bu_h\|_{0,4;\Omega}\|\psi\|_{0,4;\Omega}\|\nabla\psi_d\|_{0,2;\Omega}$ as well as
\begin{align*}
  \|\bu_h\psi_d\cdot\nabla \psi_d\|_{0,1;\Omega} &\leq \|\bu_h\|_{0,4;\Omega}\|\psi_d\|_{0,4;\Omega}\|\nabla\psi_d\|_{0,2;\Omega}.
\end{align*}
Using the continuity of the Sobolev embedding $\rH^1(\Omega)\subset \rL^4(\Omega)$ and assumption~\eqref{eq:assumption:uh} further yields
\begin{align*}
  \|\bu_h\|_{0,4;\Omega}\|\psi_d\|_{0,4;\Omega} \leq {\alpha} \underline\varepsilon \|\nabla\psi_d\|_{0,2;\Omega},
\end{align*}
{where $\alpha\in(0,1)$.} 
Together with $\|\psi\|_{0,4;\Omega}\lesssim \|\psi_D\|_{1/2,00;\Gamma_D}+\|g\|_{\rH^{-1}_D(\Omega)}$ (Proposition~\ref{prop:primal})
\begin{align*}
  \int_\Omega(\kappa \psi_d^2 + \varepsilon|\nabla \psi_d|^2) &\leq
  C(\|u_D\|_{1/2,00;\Gamma_D} + \|g\|_{\rH^{-1}_D(\Omega)})\|\bu-\bu_h\|_{0,4;\Omega}\|\nabla\psi_d\|_{0,2;\Omega} + {\alpha} \underline\varepsilon\|\nabla \psi_d\|_{0,2;\Omega}^2
  \\
  &\leq \delta^{-1}/2C^2(\|u_D\|_{1/2,00;\Gamma_D} + \|g\|_{\rH^{-1}_D(\Omega)})^2\|\bu-\bu_h\|_{0,4;\Omega}^2 
  \\ &\qquad+ \delta/2 \|\nabla\psi_d\|_{0,2;\Omega}^2 + {\alpha}\underline\varepsilon\|\nabla \psi_d\|_{0,2;\Omega}^2
\end{align*}
where in the last estimate we have employed Young's inequality with paramater $\delta>0$.
{Subtracting the last two terms on the right-hand side and using that $\kappa\geq 0$, it follows that
\begin{align*}
  (1-\alpha-\delta/2)\|\nabla\psi_d\|_{0,2;\Omega}^2 \leq \delta^{-1}/2C^2(\|u_D\|_{1/2,00;\Gamma_D} + \|g\|_{\rH^{-1}_D(\Omega)})^2\|\bu-\bu_h\|_{0,4;\Omega}^2.
\end{align*}
}
Choosing $\delta$ sufficiently small we conclude that
\begin{align*}
  \|\psi_d\|_{1,2;\Omega} \lesssim (\|u_D\|_{1/2,00;\Gamma_D} + \|g\|_{\rH^{-1}_D(\Omega)})\|\bu-\bu_h\|_{0,4;\Omega}.
\end{align*}
It remains to estimate $\|\widetilde\psi^\star-\psi^\star\|_{1,2;\Omega}$. 
Note that by continuous dependence on data, which can be shown as in Proposition~\ref{prop:primal}, it follows that
\begin{align*}
  \|\widetilde\psi^\star-\psi^\star\|_{1,2;\Omega} \lesssim \|g-\cQ g\|_{\rH^{-1}_D(\Omega)}.
\end{align*}
Noting that $g = -\vdiv\widetilde\bzeta^\star+\kappa \psi^\star$, where $\widetilde\bzeta^\star = \varepsilon\nabla\widetilde\psi^\star-\bu_h\widetilde\psi^\star$,
we have that $(1-\cQ)g = -(1-\cQ)\vdiv\widetilde\bzeta^\star + (1-\cQ)\kappa\widetilde\psi^\star$.
We stress that $(1-\cQ)\vdiv\btau_h = 0$ for all $\btau_h\in \bH_h$, {which follows from the projection property of $\cQ$ and the fact that $\vdiv(\bH_h)\subset \rQ_h$}, hence, for all $\btau_h\in\bH_h$,
\begin{align*}
  \|(1-\cQ)\vdiv\widetilde\bzeta^\star\|_{\rH^{-1}_D(\Omega)} &=
  \|(1-\cQ)\vdiv(\widetilde\bzeta^\star-\btau_h)\|_{\rH^{-1}_D(\Omega)}
  \lesssim \|\vdiv(\widetilde\bzeta^\star-\btau_h)\|_{\rH^{-1}_D(\Omega)} \\
  &= \sup_{0\neq v\in \rH^1_D(\Omega)} \frac{\langle{\vdiv(\widetilde\bzeta^\star-\btau_h)},v\rangle}{\|v\|_{1,2;\Omega}}  
  = \sup_{0\neq v\in \rH^1_D(\Omega)} \frac{-\ds \int_\Omega(\widetilde\bzeta^\star-\btau_h)\cdot\nabla v}{\|v\|_{1,2;\Omega}}\\
  &\leq \|\widetilde\bzeta^\star-\btau_h\|_{0,2;\Omega}.
\end{align*}
Here, we have used that $\btau_h\cdot\bn|_{\Gamma_N} = 0$ for $\btau_h\in \bH_h$ and the definition of the conormal derivative, which formally reads $\widetilde\bzeta^\star\cdot\bn|_{\Gamma_N} =0$.
Further, by the projection {and boundedness property of $\cQ$} we can assert that {for any $\chi_h\in\rQ_h$}
\begin{align*}
  \|(1-\cQ)(\kappa\widetilde\psi^\star)\|_{\rH^{-1}_D(\Omega)} & \lesssim {\|\kappa\widetilde\psi^\star-\chi_h\|_{\rH^{-1}_D(\Omega)}}.
\end{align*}
Combining the latter estimates with the triangle inequality gives
% \begin{align*}
%   \|\widetilde\psi^\star-\psi^\star\|_{1,2;\Omega} \lesssim \|g-\cQ g\|_{\rH^{-1}_D(\Omega)}
%   &\lesssim \|\widetilde\bzeta^\star-\btau_h\|_{0,2;\Omega} + h\|(1-\mP_h)(\kappa\widetilde\psi^\star)\|_{0,2;\Omega} \\
%   &\lesssim \|\bzeta-\btau_h\|_{0,2;\Omega} + h\|(1-\mP_h)(\kappa\psi)\|_{0,2;\Omega}
%   \\
%   &\qquad  + \|\bzeta-\widetilde\bzeta^\star\|_{0,2;\Omega} + \|\psi-\widetilde\psi^\star\|_{0,2;\Omega}.
% \end{align*}
\begin{align*}
  \|\widetilde\psi^\star-\psi^\star\|_{1,2;\Omega} & \lesssim \|g-\cQ g\|_{\rH^{-1}_D(\Omega)}
  \lesssim \|\widetilde\bzeta^\star-\btau_h\|_{0,2;\Omega} + {\|\kappa\widetilde\psi^\star-\chi_h\|_{\rH^{-1}_D(\Omega)}} \\
  &\lesssim \|\bzeta-\btau_h\|_{0,2;\Omega} + {\|\kappa\psi-\chi_h\|_{\rH^{-1}_D(\Omega)}}
   + \|\bzeta-\widetilde\bzeta^\star\|_{0,2;\Omega} +  {\|\kappa(\psi-\widetilde\psi^\star)\|_{\rH^{-1}_D(\Omega)}}
  \\
  &\lesssim \|\bzeta-\btau_h\|_{0,2;\Omega} + {\|\kappa\psi-\chi_h\|_{\rH^{-1}_D(\Omega)}}
     + \|\bzeta-\widetilde\bzeta^\star\|_{0,2;\Omega} + {\|\kappa(\psi-\widetilde\psi^\star)\|_{0,4/3;\Omega}}
  \\
  &\lesssim \|\bzeta-\btau_h\|_{0,2;\Omega} + {\|\kappa\psi-\chi_h\|_{\rH^{-1}_D(\Omega)}}
   + \|\bzeta-\widetilde\bzeta^\star\|_{0,2;\Omega} + {\|\kappa\|_{0,2;\Omega}\|\psi-\widetilde\psi^\star\|_{1,2;\Omega}.}
\end{align*}
Employing the very same arguments as before we obtain that
\begin{align*}
  \|\bzeta-\widetilde\bzeta^\star\|_{0,2;\Omega} &\lesssim \|\psi-\widetilde\psi^\star\|_{1,2;\Omega} + \|\bu-\bu_h\|_{0,4;\Omega}\|\psi\|_{0,4;\Omega} + \|\bu_h\|_{0,4;\Omega}\|\psi-\widetilde\psi^\star\|_{0,4;\Omega}
  \\
  &\lesssim (\|g\|_{\rH^{-1}_D(\Omega)}+\|\psi_D\|_{1/2,00;\Gamma_D})\|\bu-\bu_h\|_{0,4;\Omega} 
  + \|\psi-\widetilde\psi^\star\|_{1,2;\Omega}.
\end{align*}
Combining this estimate with the previous ones {concludes the proof.}
\end{proof}

\begin{remark}
  {Note that if $\kappa\in \rL^4(\Omega)$, then, $\kappa\psi \in \rL^2(\Omega)$. Therefore, duality arguments and the approximation properties of $\mP_h^k$ show that
    \[
      \min_{\chi_h\in\rQ_h}\|\kappa\psi-\chi_h\|_{\rH^{-1}_D(\Omega)} \lesssim h \|(1-\mP_h^k)(\kappa\psi)\|_{0,2;\Omega}.
    \]
  }
  If, in addition, $\kappa$ is piecewise constant, then $(1-\mP_h^k)(\kappa\psi)|_K = \kappa|_K(1-\mP_h^k)\psi|_K$ for all $K\in\cT_h$,  and therefore 
  $$\|(1-\mP_h^k)(\kappa\psi)\|_{0,2;\Omega} \lesssim \min_{v_h\in \rQ_h} \|\psi-v_h\|_{0,2;\Omega}\leq \min_{v_h\in \rQ_h} \|\psi-v_h\|_{0,4;\Omega}.$$
\end{remark}

\subsection{Mixed formulation}
Let us now rewrite a  regularised counterpart of the strong mixed form \eqref{eq:PBstrong}: 
\begin{subequations} \label{eq:PBstrong-star}
    \begin{align}
      \bzeta^\star &= \varepsilon \nabla \psi^\star - {\bu_h} \psi^\star && \text{in } \Omega, \label{eq:PB1-star} \\
        \kappa\psi^\star  - \vdiv\bzeta^\star &= \cQ g && \text{in } \Omega, \label{eq:PB2-star} \\
        \psi^\star &= \psi_{D} && \text{on } \Gamma_D, \label{eq:bc1-star} \\
        \bzeta^\star \cdot \bn &= 0 && \text{on } \Gamma_N.\label{eq:bc2-star}
    \end{align}
\end{subequations}
We remark that the flux and potential $(\bzeta^\star,\psi^\star)$ depend upon discretisation---through the dependence on the discrete map $\cQ$ and on the discrete advection velocity $\bu_h$---but they are not necessarily discrete. %{[this is no longer true, right? $\cQ$ seems  not associated with any discretisation yet \dots]}

Using appropriate integration by parts formulas in Banach spaces, we obtain a perturbed saddle-point problem: for given $\kappa\in \rL^2(\Omega)$, $\varepsilon \in \rL^\infty(\Omega)$, {$\cQ g\in \rL^{4/3}(\Omega)$}, and $\psi_D \in \rH^{1/2}_{00}(\Gamma_D)$, find $(\bzeta^\star,\psi^\star) \in \bH\times \rQ$ such that 
\begin{equation} \label{eq:weak}
\begin{aligned}
  a(\bzeta^\star,\bxi) &+ b(\bxi,\psi^\star) + d_{{\bu_h}}(\bxi,\psi^\star) &= F(\bxi) && \forall \bxi \in \bH, \\
    b(\bzeta^\star,\varphi) &- c(\psi^\star,\varphi) &= G(\varphi) && \forall \varphi \in \rQ,
\end{aligned}
\end{equation}
where (again for a fixed $\bu_h$) the bilinear forms $a:\bH\times \bH\to \bbR$, $b:\bH\times \rQ\to \bbR$, $c:\rQ\times \rQ\to \bbR$, $d_{{\bu_h}}:\bH\times \rQ\to \bbR$, and the linear functionals $F:\bH\to \bbR$ and $G:\rQ\to \bbR$ are defined as
\begin{equation}\label{eq:forms-and-functionals}
\begin{array}{cc}
\ds  a(\bzeta^\star,\bxi) := \int_\Omega \frac{1}{\varepsilon} \bzeta^\star\cdot \bxi\, ,\quad b(\bxi,\varphi) := \int_\Omega \varphi\, \vdiv \bxi\, , \quad c(\psi^\star,\varphi) := \int_\Omega \kappa\, \psi^\star\, \varphi \, ,\\[2ex]
\ds d_{{\bu_h}}(\bxi,\varphi) := \int_\Omega \frac{1}{\varepsilon} {{\bu_h}} \cdot \bxi\, \varphi \, , \quad F(\bxi) := \langle \bxi \cdot \bn, \psi_D \rangle \, , \quad G(\varphi) := -\int_\Omega \cQ g\, \varphi \, ,
\end{array}
\end{equation}
% \begin{subequations}
% \begin{align}
%     a(\bzeta,\bxi) &:= \int_\Omega \frac{1}{\varepsilon} \bzeta\cdot \bxi 
%     && \forall (\bzeta,\bxi) \in \bH \times \bH, \\
%     b(\bxi,\varphi) &:= \int_\Omega \varphi\, \vdiv \bxi 
%     && \forall (\bxi,\varphi) \in \bH \times \rQ, \label{b}\\
%     c(\psi,\varphi) &:= \int_\Omega \kappa\, \psi\, \varphi 
%     && \forall (\psi, \varphi) \in \rQ \times \rQ, \\
%     d_{\bu}(\bxi,\varphi) &:= \int_\Omega \frac{1}{\varepsilon} {\bu} \cdot \bxi\, \varphi 
%     && \forall (\bxi,\varphi) \in \bH \times \rQ, \\
%     F(\bxi) &:= \langle \bxi \cdot \bn, \psi_D \rangle 
%     && \forall \bxi \in \bH, \\
%     G(\varphi) &:= -\int_\Omega \cQ g\, \varphi 
%     && \forall \varphi \in \rQ,
% \end{align}
% \end{subequations}
 where $\langle \bullet,\bullet\rangle$ stands for the duality pairing between $\rH^{1/2}_{00}(\Gamma_D) $ and $\rH^{-1/2}_{00}(\Gamma_D)$. 

Equivalently, by defining the bilinear forms $A,A_{ {\bu_h}}:(\bH\times \rQ) \times  (\bH\times \rQ) \rightarrow \bbR$  as 
\begin{subequations}
\begin{align}\label{eq:A}
A((\bzeta^\star,\psi^\star),(\bxi,\varphi))&:=a(\bzeta^\star,\bxi)+ b(\bxi,\psi^\star)+ b(\bzeta^\star,\varphi) - c(\psi^\star,\varphi), \quad 
\end{align}
\text{and}
\begin{align}\label{eq:Au}
A_{ {\bu_h}}((\bzeta^\star,\psi^\star),(\bxi,\varphi))&:= A((\bzeta^\star,\psi^\star),(\bxi,\varphi))  + d_{{\bu_h}}(\bxi,\psi^\star), 
\end{align}
\end{subequations}
for all $(\bzeta^\star,\psi^\star),(\bxi,\varphi) \in \bH \times \rQ$, we can rewrite \eqref{eq:weak}  as 
$\text{find } (\bzeta^\star,\psi^\star) \in \bH \times \rQ \text{ such that}$
\begin{equation} \label{eq:Gweak}
  A_{ {{\bu_h}}}((\bzeta^\star,\psi^\star),(\bxi,\varphi))=F(\bxi)+G(\varphi) \qquad \forall (\bxi,\varphi) \in \bH \times \rQ.
\end{equation}
% Note that the last term is well-defined when using the map $\cQ:\rH^{-1}(\Omega) \to \mathbb{P}_k(\cT_h) \subset \rL^{4/3}(\Omega)$. We also denote by $B$ and $B^t$ the operators induced by the bilinear form $b(\bullet,\bullet)$
% \[B: \bH \to \rQ', \quad B(\bxi)(\varphi) := b(\bxi,\varphi), \quad B^t: \rQ \to \bH', \quad B^t(\varphi)(\bxi):=b(\bxi,\varphi)\]
% and define similarly the operators $A:\bH \to \bH'$ and $C:\rQ\to\rQ'$ through $a(\bullet,\bullet)$ and $c(\bullet,\bullet)$, respectively. 
% With these we regard \eqref{eq:weak} as a perturbed saddle-point problem  with operator $\begin{pmatrix} A & B^* \\ B & -C \end{pmatrix}$ in turn perturbed with $d_{\bu}(\bullet,\bullet)$ and having right-hand side $(F,G)^T$.
The well-posedness analysis follows from \cite[Theorem 3.4]{correa22}, along with the Banach--Ne\v{c}as--Babu\v{s}ka Theorem \cite[Theorem 2.6]{ern04}. In what follows, we set the necessary conditions on the bilinear forms to apply the aforementioned theorems. We start by proving the boundedness of the bilinear forms and functionals, which follows directly from the Cauchy--Schwarz and H\"older inequalities:
\begin{equation}\label{eq:bound-forms-and-functionals}
\begin{array}{cc}
\ds  \left|a(\bzeta^\star,\bxi)\right| 
    \leq \|a\| \|\bzeta^\star\|_\bH \|\bxi\|_\bH \,,\quad \left|b(\bxi,\varphi)\right|  
    \leq \|\bxi\|_{\bH} \|\varphi\|_{\rQ} \,,\quad \left|c(\psi^\star,\varphi)\right|
    \leq \|c\| \|\psi^\star\|_{\rQ} \|\varphi\|_\rQ, \\[2ex]
\left|d_{ {\bu_h}}(\bxi,\varphi)\right| 
    \leq \|d\| \|\bxi\|_{\bH} \|\psi\|_{\rQ} \,,\quad \left|F(\bxi)\right|
    \leq \|F\| \|\bxi\|_\bH \,,\quad
    \left|G(\varphi)\right|
    \leq \|G\| \|\varphi\|_\rQ.
    \end{array}
\end{equation}
% \begin{lemma}\label{lem:bound}
% The bilinear forms, linear functionals, and trilinear perturbation are all bounded in their respective norms:
% \begin{subequations}
% \begin{align}
%     \left|a(\bzeta,\bxi)\right| 
%     &= \left|\int_\Omega \frac{1}{\varepsilon} \bzeta\, \bxi \right|
%     \leq \frac{1}{\underline{\varepsilon}} \|\bzeta\|_{0,\Omega} \|\bxi\|_{0,\Omega}
%     \leq \|a\| \|\bzeta\|_\bH \|\bxi\|_\bH, \\
%     \left|b(\bxi,\varphi)\right| 
%     &= \left|\int_\Omega \varphi\, \vdiv \bxi\right|
%     \leq \|\varphi\|_{0,4;\Omega} \|\vdiv\bxi\|_{0,\frac{4}{3};\Omega} 
%     \leq \|b\| \|\varphi\|_{\rQ} \|\bxi\|_{\bH}, \\
%     \left|c(\psi,\varphi)\right| 
%     &= \left|\int_\Omega \kappa(\bar\psi)\, \psi\, \varphi \right|
%     \leq \|\kappa(\bar\psi)\|_{0,\Omega} \|\psi\|_{\rQ} \|\varphi\|_\rQ
%     \leq \|c\| \|\psi\|_{\rQ} \|\varphi\|_\rQ, \\
%    \label{inq:upper-bounds-d} \left|d_{ {\bu}}(\bxi,\varphi)\right| 
%     &= \left|\int_\Omega \frac{1}{\varepsilon} \, \varphi\,  {\bu} \cdot \bxi\right|
%     \leq \|a\| \|\psi\|_{0,4;\Omega} \| {\bu}\|_{0,4;\Omega} \|\bxi\|_{0,\Omega}
%     \leq \|a\| \| {\bu}\|_{0,4;\Omega} \|\psi\|_{\rQ} \|\bxi\|_{\bH}, \\
%     \left|F(\bxi)\right| 
%     &= \left|\langle \bxi \cdot \bn, \psi_D \rangle\right|
%     \leq \|\bxi \cdot \bn\|_{-\frac12,00;\Gamma_D} \|\psi_D\|_{\frac12,00;\Gamma_D}
%     \leq \|F\| \|\bxi\|_\bH, \\
%     \left|G(\psi)\right| 
%     &= \left|\int_\Omega \cQ g\, \psi\right|
%     \leq \|G\| \|\psi\|_\rQ,
% \end{align}\end{subequations}
 Here, the bounding constants are 
 \[
\|a\| := \frac{1}{\underline{\varepsilon}}, \; 
\|c\| := \|\kappa\|_{{0,2;\Omega}}, \; 
\|d\| := \frac{1}{\underline{\varepsilon}} \|{\bu_h}\|_{0,4;\Omega}, \; 
\|F\| := C_T \|\psi_D\|_{1/2,00;\Gamma_D}, \; 
\|G\| := \|\cQ g\|_{0,4/3;\Omega},
\]
 %$\|a\|:=\dfrac{1}{\underline{\varepsilon}}$, $\|c\|:=\|\kappa\|_{0,\Omega}$, $\|d\|:= \dfrac{1}{\underline{\varepsilon}} \|\bu\|_{0,4;\Omega}$, $\|F\|:=C_T \|\psi_D\|_{1/2,00;\Gamma_D}$, and $\|G\|:=\|\cQ g\|_{0,4/3;\Omega}$, 
where $C_T$ denotes the constant from the trace inequality. 
%\end{lemma}
% \begin{proof}
% It follows straightforwardly from H\"older and Cauchy--Schwarz inequalities in combination with the definition of the $\bH$ and $\rQ$ norms. 
% \end{proof}

The following result establishes the inf-sup condition for the bilinear form $b(\bullet,\bullet)$.
\begin{lemma}\label{lem:inf}
There exists a positive constant $\beta$ such that 
\begin{equation}
\sup_{\cero\neq\bxi \in \bH}\dfrac{b(\bxi,\varphi)}{\|\bxi\|_\bH}\geq \beta \|\varphi\|_\rQ \qquad \forall \varphi \in \rQ.
\end{equation}
\end{lemma}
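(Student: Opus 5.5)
The inf-sup condition for $b$ on $\bH\times\rQ$ with $\bH=\bH_N(\vdiv_{4/3};\Omega)$, $\rQ=\rL^4(\Omega)$ will be proved by constructing, for each $\varphi\in\rQ$, an explicit $\bxi\in\bH$ with $\vdiv\bxi$ equal to a suitable power of $\varphi$. Fix $0\neq\varphi\in\rL^4(\Omega)$ and set
\[
  f := |\varphi|^{2}\varphi \in \rL^{4/3}(\Omega), \qquad \int_\Omega f\varphi = \|\varphi\|_{0,4;\Omega}^4, \qquad \|f\|_{0,4/3;\Omega} = \|\varphi\|_{0,4;\Omega}^3 .
\]

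\textbf{Auxiliary problem.} Let $z\in\rH^1_D(\Omega)$ solve the mixed-boundary Poisson problem
\[
  \int_\Omega \nabla z\cdot\nabla v = \int_\Omega f v \quad\forall v\in\rH^1_D(\Omega),
\]
that is, $-\Delta z = f$ in $\Omega$, $z=0$ on $\Gamma_D$ and $\nabla z\cdot\bn=0$ on $\Gamma_N$. Well-posedness follows from Lax--Milgram. The Poincar\'e inequality gives coercivity since $|\Gamma_D|>0$. The right-hand side is a bounded functional on $\rH^1_D(\Omega)$ by \eqref{eq:dualAndSobolevEstimate}, since $\|f\|_{\rH^{-1}_D(\Omega)}\le C_\mathrm{Sob}\|f\|_{0,4/3;\Omega}$. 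This is essentially Proposition~\ref{prop:primal} with $\bu=\cero$, $\kappa=0$, $\varepsilon=1$, $\psi_D=0$. Hence $\|\nabla z\|_{0,2;\Omega}\lesssim \|f\|_{0,4/3;\Omega}$.

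\textbf{The test function.} Define $\bxi := -\nabla z$. Then $\bxi\in\bL^2(\Omega)$. Testing with $v\in C_c^\infty(\Omega)$ gives $\vdiv\bxi = f\in\rL^{4/3}(\Omega)$ in the distributional sense. Testing with general $v\in\rH^1_D(\Omega)$ and using the generalised Green formula gives $\bxi\cdot\bn=0$ on $\Gamma_N$, understood as an element of the dual of $\rH^{1/2}_{00}(\Gamma_N)$. Thus $\bxi\in\bH$, with
\[
  \|\bxi\|_\bH = \|\bxi\|_{0,2;\Omega} + \|\vdiv\bxi\|_{0,4/3;\Omega} \le (C+1)\,\|f\|_{0,4/3;\Omega} = (C+1)\,\|\varphi\|_{0,4;\Omega}^3 .
\]

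\textbf{Conclusion.} With this choice,
\[
  \sup_{\cero\neq\bxi\in\bH}\frac{b(\bxi,\varphi)}{\|\bxi\|_\bH} \;\ge\; \frac{\|\varphi\|_{0,4;\Omega}^4}{(C+1)\|\varphi\|_{0,4;\Omega}^3},
\]
which yields the statement with $\beta=(C+1)^{-1}$.

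\textbf{Main obstacle.} The delicate point is rigorously verifying the Neumann condition $\bxi\cdot\bn=0$ on $\Gamma_N$ in the Banach setting. Since $\vdiv\bxi$ lies only in $\rL^{4/3}$, the normal trace must be defined through a Green formula paired with $\rH^1$ (or $\rW^{1,4}$) functions. I would handle it by defining the normal trace through that Green formula, so that the weak formulation of the auxiliary problem gives $\langle\bxi\cdot\bn,v\rangle=0$ for all $v\in\rH^1_D(\Omega)$. The pairing $\int_\Omega v\,\vdiv\bxi$ is well defined because $\rH^1\subset\rL^4$.
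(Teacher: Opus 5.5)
Your proposal is correct and follows essentially the same route as the paper: take $\widehat\varphi=|\varphi|^2\varphi$, solve the mixed-boundary Poisson problem for $z\in\rH^1_D(\Omega)$, set $\bxi=-\nabla z$, and use $\int_\Omega\varphi\,\widehat\varphi=\|\varphi\|_{0,4;\Omega}\|\widehat\varphi\|_{0,4/3;\Omega}$ to get $\beta=\bigl(C_\mathrm{Sob}(1+c_P^2)^{1/2}+1\bigr)^{-1}$. Your treatment of the condition $\bxi\cdot\bn=0$ on $\Gamma_N$ through the Green formula tested against $\rH^1_D(\Omega)$ is a slightly more careful version of a step the paper simply asserts.
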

\begin{proof}
For a given $\varphi \in \rQ$, we set $\widehat{\varphi} := |\varphi|^{2} \varphi$. Then, the following holds (cf. \cite[Lemma 2.2]{gatica22}):
 \begin{equation}\label{eq:dual}
\widehat{\varphi}\in \rL^{4/3}(\Omega) \qan \int_\Omega \varphi\, \widehat\varphi = \|\varphi\|_{0,4;\Omega}\|\widehat{\varphi}\|_{0,4/3;\Omega}
 \end{equation}
Now, let $z \in \rH^1_D(\Omega)$ be the unique solution (by the Lax--Milgram lemma) of the auxiliary problem  
\begin{equation}\label{eq:aux-problem}
-\Delta z = \widehat{\varphi}\quad  \text{in } \Omega \,,\qquad z = 0 \quad  \text{on } \Gamma_D \,,\qquad \nabla z \cdot \bn = 0  \quad\text{on } \Gamma_N .  
\end{equation}
{Using H\"older's inequality, the Sobolev embedding and Poincar\'e's inequality we obtain the bound
\begin{equation}\label{eq:dep-z-phi}
  \|\nabla z\|_{0,2;\Omega} \leq C_\mathrm{Sob}(1+c_P^2)^{1/2} \|\widehat{\varphi}\|_{0,4/3;\Omega}.
\end{equation}
}
%Using Poincar\'e’s inequality (with constant $c_P$), H\"older’s inequality, and the Sobolev embedding $\|\varphi\|_{0,4;\Omega}\leq C_{\mathrm{Sob}} \|\varphi\|_{{1,2;}\Omega}$ for all $\varphi\in \rH^1(\Omega)$, we obtain the following bound  
%\begin{equation}\label{eq:dep-z-phi}
%  \|z\|_{{1;\Omega\Omega} \leq \frac{C_{\mathrm{Sob}}}{c_P^2} \|\widehat{\varphi}\|_{0,4/3;\Omega}.
%\end{equation}
Then, we define $\widehat{\bxi}:=-\nabla z \in \bL^2(\Omega)$, and observe that (see \eqref{eq:aux-problem})
$\vdiv \widehat{\bxi}=-\Delta z = \widehat{\varphi}$ in $\Omega$, with $\widehat{\bxi}\cdot \bn |_{\Gamma_N}=0$. Hence, $\widehat{\bxi} \in \bH$, and combined with \eqref{eq:dep-z-phi}, this implies
\begin{equation*}
  \|\widehat{\bxi}\|_\bH\leq \left({C_\mathrm{Sob}(1+c_P^2)^{1/2}} + 1\right)\|\widehat\varphi\|_{0,4/3;\Omega}.
\end{equation*}
% Now, let $\widehat{\bxi}:=-\nabla z \in \bL^2(\Omega)$. Therefore,
% from \eqref{eq:aux-problem} we have $\vdiv \widehat{\bxi}=-\Delta z = \widehat{\varphi}$ in $\Omega$, implying that $\vdiv \widehat{\bxi} \in \rL^{\frac{4}{3}}(\Omega)$. Also, $\widehat{\bxi}\cdot \bn |_{\Gamma_N}= - \nabla z \cdot \bn |_{\Gamma_N}= 0 $, yielding $\widehat{\bxi} \in \bH$. Hence 
%    \begin{align}
%     \|\widehat{\bxi}\|_\bH=\|\widehat{\bxi}\|_{0,\Omega}+\|\vdiv\widehat\bxi\|_{0,\frac{4}{3};\Omega}&=\|\nabla z\|_{0,\Omega}+\|\widehat\varphi\|_{0,\frac{4}{3};\Omega} \notag\\
%     %&\leq \|z\|_{1,\Omega}+\|\widehat\varphi\|_{0,\frac{4}{3};\Omega} \notag \\ 
%     &\leq \left(\frac{C_{\mathrm{Sob}}}{c^2_P} + 1\right)\|\widehat\varphi\|_{0,\frac{4}{3};\Omega}.\label{eq:beta}
%    \end{align} 
Finally, using the definition of $b(\bullet,\bullet)$ (cf. \eqref{eq:forms-and-functionals}), the second part of \eqref{eq:dual}, and the last estimate, we obtain 
\begin{align*}
\ds \sup_{\cero \neq\bxi \in \bH} \dfrac{b(\bxi,\varphi)}{\|\bxi\|_\bH}\geq \dfrac{b(\widehat\bxi,\varphi)}{\|\widehat\bxi\|_\bH}\geq \dfrac{\|\varphi\|_\rQ \|\widehat\varphi\|_{0,4/3;\Omega}}{({C_\mathrm{Sob}(1+c_P^2)^{1/2}} + 1)\|\widehat\varphi\|_{0,4/3;\Omega}}\geq \beta \|\varphi\|_\rQ,
\end{align*}
with $\beta:=\left( {C_\mathrm{Sob}(1+c_P^2)^{1/2}} + 1\right)^{-1}$. %Therefore, the proof is complete.
\end{proof}
Now, let $\bB:\bH\rightarrow\rQ'$ and $\bB^t:\rQ\rightarrow\bH'$ be the bounded linear operators induced by $b(\bullet,\bullet)$ 
\[
\begin{aligned}
\bB(\bxi)(\varphi)&:=b(\bxi,\varphi) && \forall \bxi\in \bH,\,\, \forall\varphi \in \rQ\\
\qan \bB^t(\varphi)(\bxi)&:=b(\bxi,\varphi) && \forall\varphi \in \rQ,\,\,\forall \bxi\in \bH,
    \end{aligned}
\]
we have the following kernel characterisations
\begin{subequations}\label{eq:V_W}
\begin{align}
\bV &: = \mathrm{ker}(\bB) = \{\bxi \in \bH: b(\bxi,\varphi) = 0 \quad \forall \varphi \in \rQ\} = \{   \bxi \in \bH: \vdiv \bxi = 0\quad  \mathrm{in}\ \Omega\}, \label{eq:KerV}\\
\rW &:= \mathrm{ker}(\bB^t) = \{\varphi \in \rQ: b(\bxi,\varphi) = 0 \quad \forall \bxi \in \bH\} = \{0\}. \label{eq:KerW}
\end{align}
\end{subequations}

On the other hand, it is clear that the bilinear forms $a(\bullet,\bullet)$ and $c(\bullet,\bullet)$ are symmetric. In addition, from the definition of $\bV$ and the boundedness of the permittivity, it readily follows that
\begin{equation}\label{eq:a-ellipticity}
\ds  a(\bxi,\bxi)  \geq \alpha_a\|\bxi\|^2_{\bH} \quad \forall \bxi \in \bV\, ,\qquad 
 c(\varphi,\varphi)  \geq 0 \quad \forall \varphi \in \rQ, 
\end{equation}
with $\alpha_a:=\dfrac{1}{{\overline{\varepsilon}}}$.
{The non-negativity of the bilinear form $c(\bullet,\bullet)$ follows from the assumption $\kappa\geq 0$.}
From the bounds of the involved bilinear forms \eqref{eq:bound-forms-and-functionals}, the inf-sup condition of the bilinear form $b(\bullet,\bullet)$ in Lemma~\ref{lem:inf}, the characterisation of the kernel in \eqref{eq:V_W}, the ellipticity of $a(\bullet,\bullet)$ on the kernel of $\bB$ \eqref{eq:a-ellipticity}, and using that $c(\bullet,\bullet)$ is symmetric and positive semidefinite, we have that the hypotheses of \cite[Theorem 3.4]{correa22} are satisfied for  $A(\bullet,\bullet)$, implying that there exists a constant $\alpha_{A}>0$ such that
\begin{equation} \label{eq:A-inf-sup}
  \sup_{\cero\neq(\bxi,\varphi) \in \bH \times \rQ}\dfrac{{A}((\bzeta^\star,\psi^\star),(\bxi,\varphi))}{\|(\bxi,\varphi)\|_{\bH\times\rQ}} \geq \alpha_\cA   \|(\bzeta^\star,\psi^\star)\|_{\bH\times\rQ} \quad \forall (\bzeta^\star,\psi^\star)\in{\bH\times\rQ},
\end{equation}
with $\alpha_{\cA} :=\dfrac{1}{2\tilde{c}}$, where $\tilde{c}:=\max\{c_3,c_4\}$, and
\begin{equation}\label{eq:C-A}
\begin{aligned}
c_1 &:= \frac{1}{{\alpha_a}} 
+ \frac{1}{\beta} \left(1 + \frac{\|a\|}{{\alpha_a}} \right) 
+ \frac{\|c\|}{\beta^2} \left(1 + \frac{\|a\|}{{\alpha_a}} \right)^2, \\
c_2 &:= \frac{1}{\beta} \left(1 + \frac{\|a\|}{{\alpha_a}} \right) 
\left\{ 1 + \frac{\|a\|}{\beta} 
+ \frac{\|a\|^2 \|c\|}{\beta^3} \left(1 + \frac{\|a\|}{{\alpha_a}} \right) \right\}, \\
c_3 &:= \frac{1}{{\alpha_a}} 
+ \frac{1}{\beta} \left(1 + \frac{\|a\|}{{\alpha_a}} \right) 
\left\{ 1 + \left(1 + \frac{\|a\|}{\beta} \right) 
\left(2 \|c\| \max\{c_1, c_2\} \right)^{1/2} \right\}, \\
c_4 &:= \frac{1}{\beta} \left(1 + \frac{\|a\|}{{\alpha_a}} \right) 
\left(1 + \frac{\|a\|}{\beta} \right) \left\{ 1 + \left(2 \|c\| \max\{c_1, c_2\} \right)^{1/2} \right\}.
\end{aligned}
\end{equation}
Moreover, using the bound of the bilinear form $d_{{\bu_h}}(\bullet,\bullet)$ (cf. \eqref{eq:bound-forms-and-functionals}), the definition of the bilinear forms $A(\bullet,\bullet)$ and $A_{\bu_h}(\bullet,\bullet)$ (cf. \eqref{eq:A} and \eqref{eq:Au}), and the global inf-sup condition \eqref{eq:A-inf-sup}, we can readily obtain
\begin{equation} 
\sup_{\cero\neq(\bxi,\varphi) \in \bH \times \rQ} \dfrac{ A_{{\bu_h}}((\bzeta^\star,\psi^\star),(\bxi,\varphi))}{\|(\bxi,\varphi)\|_{\bH\times\rQ}} \geq \left\{\alpha_{\cA} - \|a\|\,\| {{\bu_h}}\|_{0,4;\Omega}\right\}   \|(\bzeta^\star,\psi^\star)\|_{\bH\times\rQ} \quad \forall (\bzeta^\star,\psi^\star)\in{\bH\times\rQ}.
\end{equation}
Then, assuming that (see also Remark~\ref{rem:restrictive} below) 
\begin{equation}\label{eq:small-u}
  \| {{\bu_h}}\|_{0,4;\Omega} \leq \frac{\alpha_{\cA}}{2\|a\|},
\end{equation} 
%{(TF: please, someone check how the latter bound relates to the assumptions on the norm of $\bu_h$ in the primal system!)}
a global inf-sup stability is established for $A_{{\bu_h}}$: 
\begin{equation} \label{eq:Au-inf-sup}
  \sup_{\cero\neq(\bxi,\varphi) \in \bH \times \rQ}\dfrac{A_{{\bu_h}}((\bzeta^\star,\psi^\star),(\bxi,\varphi))}{\|(\bxi,\varphi)\|_{\bH\times\rQ}} \geq \frac{\alpha_{\cA}}{2}   \|(\bzeta^\star,\psi^\star)\|_{\bH\times\rQ} \quad \forall (\bzeta^\star,\psi^\star)\in{\bH\times\rQ}.
\end{equation}
Similarly, by the symmetry of {$A(\bullet,\bullet)$} and under the same assumption on {$\bu_h$} \eqref{eq:small-u}, we obtain 
\begin{equation} \label{eq:Au-inf-sup-sym}
  \sup_{\substack{0 \neq (\bzeta^\star,\psi^\star) \in \bH \times \rQ }}\dfrac{A_{{\bu_h}}((\bzeta^\star,\psi^\star),(\bxi,\varphi))}{\|(\bzeta^\star,\psi^\star)\|_{\bH\times\rQ}} \geq \frac{\alpha_\cA}{2} \|(\bxi,\varphi)\|_{\bH\times\rQ}   \quad \forall (\bxi,\varphi)\in{\bH\times\rQ}.
\end{equation}
In this way, we are now able to establish the well-posedness of \eqref{eq:weak} (equivalently \eqref{eq:Gweak}).
\begin{theorem}\label{th:solv}
  Given ${\bu_h}\in \bL^4(\Omega)$ satisfying \eqref{eq:small-u}, there exists a unique $(\bzeta^\star,\psi^\star)\in \bH \times \rQ$ solution to \eqref{eq:weak}, and there holds
    \begin{equation}\label{eq:stability}
        \|\bzeta^\star\|_\bH+\|\psi^\star\|_\rQ\leq \frac{2}{\alpha_{\cA}}\max\{1,C_T\}\left\{\|\psi_D\|_{1/2,00;\Gamma_D}+\|\cQ g\|_{0,4/3;\Omega}\right\}.
    \end{equation}
\end{theorem}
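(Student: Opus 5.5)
The plan is to apply the Banach--Ne\v{c}as--Babu\v{s}ka theorem \cite[Theorem 2.6]{ern04} to the bilinear form $A_{\bu_h}$ on the reflexive Banach space $\bH\times\rQ$. Since $\bu_h$ satisfies \eqref{eq:small-u}, we may take the inf-sup conditions \eqref{eq:Au-inf-sup} and \eqref{eq:Au-inf-sup-sym} as already established, both with constant $\alpha_\cA/2$. Boundedness of $A_{\bu_h}$ follows at once from the bounds \eqref{eq:bound-forms-and-functionals} on $a$, $b$, $c$ and $d_{\bu_h}$. The right-hand side functional $(\bxi,\varphi)\mapsto F(\bxi)+G(\varphi)$ belongs to $(\bH\times\rQ)'$, again by \eqref{eq:bound-forms-and-functionals}. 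The only point needing care is that $G$ is finite: $\cQ g\in\rQ_h$ is piecewise polynomial on a bounded domain, so $\cQ g\in\rL^{4/3}(\Omega)$.

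The two hypotheses of the theorem are then checked directly. Condition (BNB1) is exactly \eqref{eq:Au-inf-sup}. Condition (BNB2) asks that $A_{\bu_h}((\bzeta,\psi),(\bxi,\varphi))=0$ for all $(\bzeta,\psi)$ force $(\bxi,\varphi)=\cero$, and this follows from \eqref{eq:Au-inf-sup-sym}, because the supremum over trial functions dominates $\frac{\alpha_\cA}{2}\|(\bxi,\varphi)\|$. This gives existence and uniqueness of the solution of \eqref{eq:Gweak}, and hence of \eqref{eq:weak}.

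The a priori estimate comes from \eqref{eq:Au-inf-sup} applied to the solution itself:
\[
\frac{\alpha_\cA}{2}\|(\bzeta^\star,\psi^\star)\|_{\bH\times\rQ}\leq \sup_{\cero\neq(\bxi,\varphi)}\frac{F(\bxi)+G(\varphi)}{\|\bxi\|_\bH+\|\varphi\|_\rQ}\leq \max\{\|F\|,\|G\|\}.
\]
Inserting $\|F\|=C_T\|\psi_D\|_{1/2,00;\Gamma_D}$ and $\|G\|=\|\cQ g\|_{0,4/3;\Omega}$, and bounding the maximum by $\max\{1,C_T\}$ times the sum, yields \eqref{eq:stability}.

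The step that needs the most care is the adjoint inf-sup \eqref{eq:Au-inf-sup-sym}. The form $A$ is symmetric but $A_{\bu_h}$ is not, because $d_{\bu_h}$ only couples the test flux with the trial potential. The bound still holds since $|d_{\bu_h}(\bxi,\psi)|\leq \|a\|\|\bu_h\|_{0,4;\Omega}\|\bxi\|_\bH\|\psi\|_\rQ$ is symmetric in size. So one starts from the inf-sup of $A$ with the roles of the arguments exchanged, which holds by symmetry, and absorbs the perturbation using \eqref{eq:small-u}, exactly as in the derivation of \eqref{eq:Au-inf-sup}. A second point to confirm is that $F$ is bounded on $\bH$, i.e., that the normal trace of $\bxi\in\bH$ lies in $\rH^{-1/2}_{00}(\Gamma_D)$ with constant $C_T$. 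This holds because $\rH^1(\Omega)\subset\rL^4(\Omega)$, so the Green formula $\langle\bxi\cdot\bn,v\rangle=\int_\Omega\bxi\cdot\nabla v+\int_\Omega v\vdiv\bxi$ makes sense for $v\in\rH^1(\Omega)$.
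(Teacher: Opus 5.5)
Your proposal is correct and follows the paper's own argument. (BNB1) is \eqref{eq:Au-inf-sup}, (BNB2) follows from \eqref{eq:Au-inf-sup-sym}, and the stability bound comes from applying the inf-sup inequality to the solution together with the bounds on $F$ and $G$ in \eqref{eq:bound-forms-and-functionals}. You also supply details the paper leaves implicit: the estimate $\max\{\|F\|,\|G\|\}\le\max\{1,C_T\}(\|\psi_D\|_{1/2,00;\Gamma_D}+\|\cQ g\|_{0,4/3;\Omega})$, the perturbation argument behind the adjoint inf-sup, and the justification that $F$ is bounded on $\bH$.
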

\begin{proof} 
Since \eqref{eq:Au-inf-sup} and \eqref{eq:Au-inf-sup-sym}  guarantee that $A_{{\bu_h}}(\bullet,\bullet)$ satisfies the hypotheses of the BNB Theorem \cite[Theorem 2.6]{ern04}, the proof becomes a direct application of this theorem. Hence, the estimate \eqref{eq:stability} follows from \eqref{eq:Gweak} together with the boundedness of $F(\bullet)$  and $G(\bullet)$ (cf. \eqref{eq:bound-forms-and-functionals}).
\end{proof}
\begin{remark}\label{rem:restrictive}
    {We note here that the mixed formulation \eqref{eq:PBstrong-star} requires $\|u_h\| < \frac{\underline{\varepsilon}}{4C^*}$, whereas the primal problem \eqref{eq:PBstrong:primal} only assumes $\|u_h\| < \frac{\underline{\varepsilon}}{C}$. The constant $C$ depends  on $\Omega$,  as well as the Sobolev and Poincar\'e constants, while $C^*$ additionally depends on $\kappa$, the permittivity bounds, and the inf--sup constant $\beta$. Hence, the mixed smallness condition \eqref{eq:small-u} is more restrictive.}
\end{remark}

%%%%%%%%%%%%%%%%%%%%%%%%%%%%%%%%%%%%%%%%%%%%%%%%%%%%%%%%%%%%%%%%%%%%%%%%%%%%%%%%%%%%%%%%%%%%%
\section{Discrete problem}\label{sec:fe}
Let us denote by $\bM_h$ a finite-dimensional subspace of $\bL^4(\Omega)$ (for example, $[\rP_k(\cT_h)]^d$), and consider for this section that the given vector field $\bu_h \in \bM_h$. 

Following the usual approach, we obtain the Galerkin discretisation of \eqref{eq:PB1-star}-\eqref{eq:PB2-star} as a perturbed saddle-point problem that looks for  $(\bzeta_h,\psi_h) \in \bH_h\times \rQ_h$ such that 
\begin{equation} \label{eq:weak-h}
\begin{aligned}
    a(\bzeta_h,\bxi_h) &+ b(\bxi_h,\psi_h) + d_{\bu_h}(\bxi_h,\psi_h) &= F(\bxi_h) && \forall \bxi_h \in \bH_h, \\
    b(\bzeta_h,\varphi_h) &- c(\psi_h,\varphi_h) &= G(\varphi_h) && \forall \varphi_h \in \rQ_h,
\end{aligned}
\end{equation}

In order to analyse the discrete solvability of \eqref{eq:weak-h} we use the abstract discrete theory from \cite[Theorem 3.5]{correa22}. 

We remark that, for our particular choice of Raviart--Thomas elements (and similarly for other finite elements such as the Brezzi--Douglas--Marini (BDM) elements), the following property holds. Since we will later need a more explicit characterisation of the kernel of the discrete divergence operator, denoted by $\bV_h$, this property will play a key role
%We remark that, for our particular choice of Raviart--Thomas elements (and similarly for other finite elements such as The Brezzi--Douglas--Marini (BDM) elements), the discrete space $\bH_h$ is contained in $\bRT_k(\cT_h)$. In addition, the divergence of any function in $\mathbb{RT}_k(K)$ is in $\mathbb{P}_k(K)$, therefore, the following inclusion holds 
\begin{equation}\label{eq:kernel-assumption}
    \vdiv \bH_h \subseteq \rQ_h.
\end{equation}

Let us recall some preliminary results. First, we consider the quasi-interpolation operator defined in \cite[Theorem~2.2]{Ern18},
\[
\Pi^{\mathsf{RT}}_{\rqu}:\bH \rightarrow \bH_h,
\] 
which satisfies the following properties:
 \begin{itemize}
     \begin{subequations}
    \item For all $\bxi_h \in \bH_h$, we have
     \begin{align}\label{eq:quasi-invariant}
           \Pi^{\mathsf{RT}}_{\rqu}(\bxi_h)=\bxi_h.
     \end{align}
     \item For $1\leq p \leq \infty$, there exists $C_{\rqu}$, independent of $h$, such that
        \begin{align}\label{eq:quasi-bdd}
            \|\Pi^{\mathsf{RT}}_{\rqu} \bxi\|_{0,p;\Omega} \leq C_{\rqu} \| \bxi\|_{0,p;\Omega}, 
            % \label{eq:quasi-stable}
            % \text{and } \|\bxi-\Pi^{\mathsf{RT}}_{\rqu}(\bxi)\|_{0,p;\Omega} &\leq C_{\rqu} \underset{\bxi_h\in \bH_h}{\inf}\|\bxi-\bxi_h\|_{0,p;\Omega}, 
        \end{align}
      for all $\bxi \in \rL^p(\Omega)$.
        \end{subequations}
     \item The following commuting property (cf. \cite[Section 5.3]{Colmenares20}) holds 
        \begin{equation}\label{eq:commuting}
          \vdiv(\Pi^{\mathsf{RT}}_{\rqu}\bxi) = {\mP_h^k}(\vdiv\,\bxi)
         \quad \forall \bxi \in \bH,
        \end{equation}
where $\mP_h^k:\rL^1(\Omega)\rightarrow\rP_k(\cT_h)$ is  is the usual orthogonal projector
with respect to the $\rL^2(\Omega)$-inner product, that is, given $w \in \rL^1(\Omega)$,
$\mP^k_h(w)$ is the unique element in $\rP_k(\cT_h)$ satisfying
\[
\int_\Omega \mP^k_h(w) \,q_h \,=\, \int_\Omega w\, q_h \qquad\forall\, q_h\in \rP_k(\cT_h)\,.
\]
%the usual $\rL^2(\Omega)$-orthogonal projector.
%{TF: maybe explicitely mention that $\mP_h^k$ is well defined for $p<2$? ?}
 \end{itemize}
%{TF: Does someone has a reference for the next result? \
{The following result can be found in 
%for $p \in (1,\infty)$ it is in 
\cite[Eq. (4.4)]{gatica22}.}
{\begin{lemma}\label{lem:boundednessL2projector}
  The $\rL^2(\Omega)$-orthogonal projector onto the piecewise polynomial space $\rQ_h$ is bounded as a mapping from $\rL^p(\Omega)\to \rL^p(\Omega)$ for all $p\in[1,\infty]$.
\end{lemma}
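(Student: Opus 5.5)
The plan is to exploit the fact that $\mP_h^k$ acts elementwise: for $w\in\rL^1(\Omega)$ and $K\in\cT_h$, $(\mP_h^k w)|_K$ is the $\rL^2(K)$-orthogonal projection of $w|_K$ onto $\rP_k(K)$, because $\rQ_h=\rP_k(\cT_h)$ is discontinuous and the defining orthogonality relation decouples over elements. Hence it suffices to prove a local bound $\|\mP_h^k w\|_{0,p;K}\leq C\|w\|_{0,p;K}$ with $C$ independent of $K$ and $h$. Summing $p$-th powers over $K$ (or taking the maximum when $p=\infty$) then gives the global bound.

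For the local bound I would map to the reference element $\widehat K$ through the affine map $F_K(\hat x)=B_K\hat x+b_K$. Since $\rP_k$ is invariant under affine maps and the Jacobian $|\det B_K|$ is constant, the local $\rL^2$ projection commutes with pullback: $(\mP_K w)\circ F_K=\widehat{\mP}(w\circ F_K)$, where $\widehat{\mP}$ is the $\rL^2(\widehat K)$ projection onto $\rP_k(\widehat K)$. Moreover $\|v\|_{0,p;K}=|\det B_K|^{1/p}\|v\circ F_K\|_{0,p;\widehat K}$ for every $p$. The scaling factor is the same on both sides, so it cancels, and the problem reduces to showing that $\widehat{\mP}:\rL^p(\widehat K)\to\rL^p(\widehat K)$ is bounded. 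The operator norm is then uniform in $K$ without invoking shape regularity, because of the affine invariance.

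On $\widehat K$, choose an $\rL^2(\widehat K)$-orthonormal basis $\{\hat q_i\}_{i=1}^{N}$ of $\rP_k(\widehat K)$. Then $\widehat{\mP}\hat w=\sum_i\big(\int_{\widehat K}\hat w\,\hat q_i\big)\hat q_i$. Each coefficient satisfies $|\int\hat w\hat q_i|\leq\|\hat q_i\|_{0,\infty;\widehat K}\|\hat w\|_{0,1;\widehat K}\leq C\|\hat w\|_{0,p;\widehat K}$, by Hölder's inequality on the bounded set $\widehat K$. Each $\|\hat q_i\|_{0,p;\widehat K}$ is a fixed finite number. This gives $\|\widehat{\mP}\hat w\|_{0,p;\widehat K}\leq \widehat C(k,p)\|\hat w\|_{0,p;\widehat K}$, and since $\widehat K$ has finite measure the constant can even be taken uniform in $p\in[1,\infty]$.

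I do not expect a genuine obstacle. The one point to check carefully is the commutation of the local projection with the affine pullback. It holds because the change of variables multiplies every integral by the same constant $|\det B_K|$, so orthogonality in $\rL^2(K)$ against $\rP_k(K)$ is equivalent to orthogonality in $\rL^2(\widehat K)$ against $\rP_k(\widehat K)$. One should also note that $\mP_h^k$ is well defined on $\rL^p\subset\rL^1$ for $p<2$, since the test functions are bounded. Alternatively, one can simply cite \cite[Eq. (4.4)]{gatica22}.
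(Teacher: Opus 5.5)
Your proof is correct. It differs from the paper only in that the paper gives no argument: it simply refers to \cite[Eq.~(4.4)]{gatica22}.

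What you supply is the standard self-contained proof behind such statements. Since $\rQ_h=\rP_k(\cT_h)$ has no interelement continuity, $\mP_h^k$ decouples into local projections. Affine equivariance of the local $\rL^2$ projection, together with the identical Jacobian factor $|\det B_K|^{1/p}$ on both sides, shows that the $\rL^p(K)$ operator norm equals that of $\widehat{\mP}$ on $\rL^p(\widehat K)$. The latter is a finite-dimensional bound.

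Writing it out makes explicit what the later uses of the lemma actually require. These are the discrete inf-sup condition in Lemma~\ref{lem:inf-h} and the estimate in Theorem~\ref{thm:errorestimate}, both with $p=4/3$, and what they need is an $h$-independent constant. Your argument in fact shows more: the constant depends only on $k$ and the dimension, not on the shape-regularity parameters and not on $p$.

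This relies on the elements being affine images of a reference simplex, which is the paper's setting. For non-affinely mapped elements the Jacobian would not be constant, and you would need shape regularity to control the resulting weighted projection.
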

}

In order to analyse the discrete solvability of \eqref{eq:weak-h} we use \cite[Theorem 3.5]{correa22}. We begin by considering $\bB_h : \bH_h \to \rQ_h'$ and $\bB_h^t : \rQ_h \to \bH_h'$ to be the discrete bounded linear operators induced by $b(\bullet,\bullet)$, that is
\[
\begin{aligned}
\bB_h(\bxi_h)(\varphi_h) &:= b(\bxi_h, \varphi_h) 
&& \forall\, \bxi_h \in \bH_h, \; \forall\, \varphi_h \in \rQ_h, \\
\bB_h^t(\varphi_h)(\bxi_h) &:= b(\bxi_h, \varphi_h) 
&& \forall\, \varphi_h \in \rQ_h, \; \forall\, \bxi_h \in \bH_h.
\end{aligned}
\]

Using the property \eqref{eq:kernel-assumption} and proceeding similarly as in the continuous case, we have that the following discrete kernel characterisations hold
\begin{subequations}
\begin{align}
\bV_h &:= \ker(\bB_h) 
    = \left\{ \bxi_h \in \bH_h : b(\bxi_h,\varphi_h) = 0, \quad \forall\, \varphi_h \in \rQ_h \right\} 
    \equiv \left\{ \bxi_h \in \bH_h : \vdiv \bxi_h = 0  \right\}, \label{eq:KerV-h}\\
    \rW_h &:= \ker(\bB_h^t) 
    = \left\{ \varphi_h \in \rQ_h : b(\bxi_h,\varphi_h) = 0, \quad \forall\, \bxi_h \in \bH_h \right\} 
    \equiv \{0\}, \label{eq:KerW-h}
\end{align}\end{subequations}
thus $\bV_h\subseteq\bV$, and from \eqref{eq:a-ellipticity}, we obtain
\begin{equation}\label{eq:a-ellipticity-d}
    a(\bxi_h,\bxi_h)  \geq \alpha_a\|\bxi_h\|^2_{\bH} \quad \forall \bxi_h \in \bV_h,\qquad 
        c(\varphi_h,\varphi_h)  \geq 0 \quad \forall \varphi_h \in \rQ_h,
\end{equation}
with $\alpha_a$ the same constant as in \eqref{eq:a-ellipticity}.
\begin{lemma}\label{lem:inf-h}
There exists a positive constant $\beta_d>0$, independent of h, such that 
\begin{equation}\label{eq:inf-h}
    \sup_{\cero\neq\bxi_h \in \bH_h}
    \dfrac{b(\bxi_h,\varphi_h)}{\|\bxi_h\|_\bH}
    \geq \beta_d \|\varphi_h\|_\rQ 
    \qquad \forall \varphi_h \in \rQ_h.
\end{equation}
\end{lemma}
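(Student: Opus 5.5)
The plan is a Fortin-type argument: take the continuous inf-sup test function from Lemma~\ref{lem:inf} and project it onto $\bH_h$ with the quasi-interpolator $\Pi^{\mathsf{RT}}_{\rqu}$. Fix $\varphi_h\in\rQ_h$ and set $\widehat\varphi_h:=|\varphi_h|^2\varphi_h\in\rL^{4/3}(\Omega)$, so that $\int_\Omega\varphi_h\widehat\varphi_h=\|\varphi_h\|_{0,4;\Omega}\|\widehat\varphi_h\|_{0,4/3;\Omega}$ by \eqref{eq:dual}. Let $z\in\rH^1_D(\Omega)$ solve the auxiliary problem \eqref{eq:aux-problem} with datum $\widehat\varphi_h$, and put $\widehat\bxi:=-\nabla z\in\bH$. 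By \eqref{eq:dep-z-phi} we have $\|\widehat\bxi\|_{0,2;\Omega}\le C_\mathrm{Sob}(1+c_P^2)^{1/2}\|\widehat\varphi_h\|_{0,4/3;\Omega}$, and $\vdiv\widehat\bxi=\widehat\varphi_h$.

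Next define $\widehat\bxi_h:=\Pi^{\mathsf{RT}}_{\rqu}\widehat\bxi\in\bH_h$. By the commuting property \eqref{eq:commuting}, $\vdiv\widehat\bxi_h=\mP_h^k\widehat\varphi_h$. Since $\varphi_h\in\rQ_h$, the defining property of the $\rL^2$ projector gives
\[
b(\widehat\bxi_h,\varphi_h)=\int_\Omega\varphi_h\,\mP_h^k\widehat\varphi_h=\int_\Omega\varphi_h\widehat\varphi_h=\|\varphi_h\|_\rQ\|\widehat\varphi_h\|_{0,4/3;\Omega}.
\]
For the norm, \eqref{eq:quasi-bdd} with $p=2$ gives $\|\widehat\bxi_h\|_{0,2;\Omega}\le C_{\rqu}\|\widehat\bxi\|_{0,2;\Omega}$. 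Lemma~\ref{lem:boundednessL2projector} with $p=4/3$ gives $\|\vdiv\widehat\bxi_h\|_{0,4/3;\Omega}=\|\mP_h^k\widehat\varphi_h\|_{0,4/3;\Omega}\le C_{\mP}\|\widehat\varphi_h\|_{0,4/3;\Omega}$. Together,
\[
\|\widehat\bxi_h\|_\bH\le\bigl(C_{\rqu}C_\mathrm{Sob}(1+c_P^2)^{1/2}+C_{\mP}\bigr)\|\widehat\varphi_h\|_{0,4/3;\Omega}.
\]
Dividing the two displays yields \eqref{eq:inf-h} with $\beta_d:=\bigl(C_{\rqu}C_\mathrm{Sob}(1+c_P^2)^{1/2}+C_{\mP}\bigr)^{-1}$.

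The main obstacle is making sure every constant is independent of $h$ and that the operators act on the right spaces. Two points need checking.
\begin{itemize}
\item $\Pi^{\mathsf{RT}}_{\rqu}$ must be well defined on $\bH$, whose divergence lies only in $\rL^{4/3}$ rather than $\rL^2$, and must preserve the condition $\bxi\cdot\bn=0$ on $\Gamma_N$. Both are implicit in its stated domain $\bH\to\bH_h$.
\item The commuting identity \eqref{eq:commuting} requires $\mP_h^k$ to be applied to $\rL^1$ data, which is exactly how it was defined.
\end{itemize}
The $\rL^{4/3}$-stability of $\mP_h^k$ on shape-regular meshes is supplied by Lemma~\ref{lem:boundednessL2projector}. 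The constant $C_{\rqu}$ is $h$-independent by \eqref{eq:quasi-bdd}, so $\beta_d$ does not depend on $h$.
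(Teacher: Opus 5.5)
Your proposal is correct and is essentially the paper's own Fortin-type argument. You take the continuous test field $\bxi$ with $\vdiv\bxi=|\varphi_h|^2\varphi_h$ and apply $\Pi^{\mathsf{RT}}_{\rqu}$. The commuting property together with the $\rL^2$-orthogonality of $\mP_h^k$ against $\varphi_h\in\rQ_h$ then recovers $b(\Pi^{\mathsf{RT}}_{\rqu}\bxi,\varphi_h)=\|\varphi_h\|_\rQ\|\widehat\varphi_h\|_{0,4/3;\Omega}$, and you bound the norm via \eqref{eq:quasi-bdd} and Lemma~\ref{lem:boundednessL2projector}.

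The only difference is cosmetic. You reuse the explicit field $-\nabla z$ and bound its $\rL^2$ part directly, which gives $\beta_d=(C_{\rqu}C_{\mathrm{Sob}}(1+c_P^2)^{1/2}+C_{\mP})^{-1}$. The paper instead routes both parts through $\|\bxi\|_\bH\le\beta^{-1}\|\bar\varphi_h\|_{0,4/3;\Omega}$ and obtains $\beta_d=\beta/(C_{\rqu}+C_{\mP})$.
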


\begin{proof} 
 We first invoke the continuous inf-sup condition (cf.~Lemma~\ref{lem:inf}) where for a given $\varphi_h \in \rQ_h\subseteq \rQ$, we have $\bar{\varphi}_h\in\rL^{4/3}(\Omega)$ such that there exists $\bxi\in \bH,$ with $\vdiv\bxi=\bar{\varphi}_h$ and 
 \begin{equation}\label{eq:xi-estimate}    \|\bxi\|_\bH\leq\frac1\beta\|\bar{\varphi}_h\|_{0, 4/3;\Omega}.
 \end{equation}
 Next, we take $\bar{\bxi}_h\in \bH_h$ to be the quasi-interpolation of $\bxi$ (i.e. $\Pi^{\mathsf{RT}}_{\rqu}\bxi=\bar{\bxi}_h$). Then, using \eqref{eq:quasi-bdd} and the definition of the $\bH$-norm, we obtain
 \begin{equation}\label{eq:xi_h-estimate}
     \|\bar{\bxi}_h\|_{0;\Omega}\leq C_{\rqu}\|\bxi\|_{0;\Omega}\leq C_{\rqu}\|\bxi\|_\bH.
\end{equation}
On the other hand, by applying the commuting property of $\Pi^{\mathsf{RT}}_{\rqu}$ (cf. \eqref{eq:commuting}, for $p=4$), we have 
\begin{equation}
\vdiv\bar{\bxi}_h=\vdiv\Pi^{\mathsf{RT}}_{\rqu}\bxi=\mP_h^k(\vdiv\bxi),
\end{equation}
which together with \eqref{eq:xi_h-estimate}, \eqref{eq:xi-estimate}, the continuity of $\mP_h^k$ (with $C_{\mP}>0$, independent of $h$), the definition of the $\bH$-norm, and \eqref{eq:xi-estimate}, gives
\begin{equation}\label{eq:beta-h}
\begin{array}{ll}
  \ds \|\bar{\bxi}_h\|_\bH=\|\bar{\bxi}_h\|_{{0,2;\Omega}}+\|\vdiv\bar{\bxi}_h\|_{0,\frac43;\Omega} \leq \frac{C_{\rqu}}{\beta}\|\bar{\varphi}_h\|_{0,\frac43;\Omega}+\|\mP_h^k(\vdiv\bxi)\|_{0,\frac43;\Omega} \\
\ds \quad \leq \frac{C_{\rqu}}{\beta}\|\bar{\varphi}_h\|_{0,\frac43;\Omega}+C_\mP\|\vdiv\bxi\|_{0,\frac43;\Omega} \leq \frac{C_{\rqu}}{\beta}\|\bar{\varphi}_h\|_{0,\frac43;\Omega}+C_\mP\|\bxi\|_\bH  \leq \left(\frac{C_{\rqu}+C_\mP}{\beta}\right)\|\bar{\varphi}_h\|_{0,\frac43;\Omega}. 
\end{array}    
\end{equation}
Finally, according to the definition of $b(\bullet,\bullet)$ and \eqref{eq:beta-h}, we obtain 
\begin{align*}
 \sup_{\substack{0\neq\bxi_h \in \bH_h }}\dfrac{b(\bxi_h,\varphi_h)}{\|\bxi_h\|_\bH} & \geq  \dfrac{b(\bar{\bxi}_h,\varphi_h)}{\|\bar{\bxi}_h\|_\bH} \, = \, \dfrac{\displaystyle\int_\Omega \varphi_h \vdiv\bar{\bxi}_h}{\|\bar{\bxi}_h\|_\bH} \, = \, \dfrac{\displaystyle\int_\Omega \varphi_h \mP_h^k(\vdiv\bxi)}{\|\bar{\bxi}_h\|_\bH}
 \, = \, \dfrac{\displaystyle\int_\Omega \varphi_h \mP_h^k(\bar{\varphi}_h)}{\|\bar{\bxi}_h\|_\bH}\\
 & =\, \dfrac{\displaystyle\int_\Omega \varphi_h \bar{\varphi}_h}{\|\bar{\bxi}_h\|_\bH} \, \geq \,  \frac{\beta}{C_{\rqu}+C_\mP}\frac{\|\varphi_h\|_{\rQ}\|\bar{\varphi}_h\|_{0,\frac43;\Omega}}{\|\bar{\varphi}_h\|_{0,\frac43;\Omega}} \, = \, \frac{\beta}{C_{\rqu}+C_\mP}\|\varphi_h\|_{\rQ},
\end{align*}
with $\beta_d:=\dfrac{\beta}{C_{\rqu}+C_\mP}$.
\end{proof}

Consequently, by applying \cite[Theorem 3.5]{correa22} directly, we obtain that the discrete global inf–sup condition for $A(\bullet,\bullet)$ (cf. \eqref{eq:A}) holds with {$\alpha_{\cA,d} > 0$}, where the constant depends only on ${\alpha}, \,\beta_d, \, \|a\|$, and $\|c\|$ (cf.\eqref{eq:C-A}). As a result, we have the following discrete analogue of \eqref{eq:Au-inf-sup}, that is
 \begin{equation}
  \sup_{\substack{(\bxi_h,\varphi_h) \in \bH_h \times \rQ_h \\(\bxi_h,\varphi_h) \neq 0}}\dfrac{A_{{\bu}_h}((\bzeta_h,\psi_h),(\bxi_h,\varphi_h))}
        {\|(\bxi_h,\varphi_h)\|_{\bH\times\rQ}}\geq \left\{\alpha_{\cA,d} - \|a\|\|{\bu}_h\|_{0,4;\Omega}\right\}\|(\bzeta_h,\psi_h)\|_{\bH\times\rQ},
\end{equation}
for all $(\bzeta_h,\psi_h)\in\bH_h\times\rQ_h$.
And under the following discrete version of the data assumption  
\begin{equation}\label{eq:small-u-h}
    \| {\bu_h}\|_{0,4;\Omega} \leq \frac{\alpha_{\cA,d}}{2\|a\|},
\end{equation} 
we obtain
\begin{equation} \label{eq:Au-inf-sup-h}
  \sup_{\substack{0\neq(\bxi_h,\varphi_h) \in \bH_h \times \rQ_h }}
  \dfrac{A_{{\bu}_h}((\bzeta_h,\psi_h),(\bxi_h,\varphi_h))}
        {\|(\bxi_h,\varphi_h)\|_{\bH\times\rQ}}
  \geq \frac{\alpha_{A,d}}{2}\,
        \|(\bzeta_h,\psi_h)\|_{\bH\times\rQ}
  \qquad 
  \forall (\bzeta_h,\psi_h)\in \bH_h \times \rQ_h.
\end{equation}
%  Analogously, using the symmetry of $A(\bullet,\bullet)$ and the assumption on ${\bu}_h$ \eqref{eq:small-u-h}, we can assert that 
% \begin{equation} \label{eq:Au-inf-sup-sym-h}
%   \sup_{\substack{(\bzeta_h,\psi_h) \in \bH_h \times \rQ_h \\ (\bzeta_h,\psi_h) \neq 0}}
%     \frac{A_{{\bu}_h}\big((\bzeta_h,\psi_h),(\bxi_h,\varphi_h)\big)}
%          {\|(\bzeta_h,\psi_h)\|_{\bH\times\rQ}}
%     \geq \frac{\alpha_{A,d}}{2} 
%           \|(\bxi_h,\varphi_h)\|_{\bH\times\rQ}
%   \qquad 
%   \forall\,(\bxi_h,\varphi_h)\in \bH_h \times \rQ_h.
% \end{equation}
As a consequence of the above bound, we are now in a position to establish the solvability of \eqref{eq:weak-h} (equivalently, the discrete counterpart of \eqref{eq:Gweak}).
\begin{theorem}\label{th:solv-h}
Given ${\bu_h} \in \bM_h \subseteq \bL^4(\Omega)$ satisfying $\|\bu_h\|_{0,4;\Omega} \leq \frac{\alpha_{\cA,d}}{2\|a\|}$, there exists a unique $(\bzeta_h,\psi_h)\in \bH_h \times \rQ_h$ solution to \eqref{eq:weak-h}, and there holds
\begin{equation}\label{eq:estability-h}
\|\bzeta_h\|_\bH+\|\psi_h\|_\rQ\leq \frac{2}{\alpha_{\cA,d}}\max\{1,C_T\}\left\{\|\psi_D\|_{\frac12,00;\Gamma_D}+\|\cQ g\|_{0,\frac43;\Omega}\right\}.
\end{equation}
\end{theorem}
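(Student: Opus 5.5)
The argument mirrors the proof of Theorem~\ref{th:solv}, replacing the continuous spaces by $\bH_h\times\rQ_h$ and the constant $\alpha_\cA$ by $\alpha_{\cA,d}$. Since $\bH_h\times\rQ_h$ is finite dimensional, the discrete Banach--Ne\v{c}as--Babu\v{s}ka theorem \cite[Theorem 2.6]{ern04} reduces to a single condition: the discrete global inf-sup condition \eqref{eq:Au-inf-sup-h} alone gives injectivity of the linear operator $\bH_h\times\rQ_h\to(\bH_h\times\rQ_h)'$ induced by $A_{\bu_h}$. Because domain and codomain have the same finite dimension, injectivity implies bijectivity. In particular, the transposed (surjectivity) condition need not be proved separately, although it also follows from the symmetry of $A$ together with \eqref{eq:small-u-h}, exactly as \eqref{eq:Au-inf-sup-sym} was obtained.

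First I would recall how \eqref{eq:Au-inf-sup-h} is obtained under the hypothesis $\|\bu_h\|_{0,4;\Omega}\le \alpha_{\cA,d}/(2\|a\|)$. The ingredients for \cite[Theorem 3.5]{correa22} are:
\begin{itemize}
\item the discrete kernel characterisations \eqref{eq:KerV-h}--\eqref{eq:KerW-h}, which rely on \eqref{eq:kernel-assumption};
\item the ellipticity of $a$ on $\bV_h$ and the positive semidefiniteness of $c$, see \eqref{eq:a-ellipticity-d};
\item the $h$-independent discrete inf-sup condition of Lemma~\ref{lem:inf-h}.
\end{itemize}
These yield the inf-sup constant $\alpha_{\cA,d}$ for $A$. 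The perturbation $d_{\bu_h}$ is bounded by $\|a\|\,\|\bu_h\|_{0,4;\Omega}$ (cf. \eqref{eq:bound-forms-and-functionals}), and subtracting this bound leaves the constant $\alpha_{\cA,d}/2$. Existence and uniqueness of $(\bzeta_h,\psi_h)$ then follow from the dimension argument above.

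For the stability bound \eqref{eq:estability-h}, I apply \eqref{eq:Au-inf-sup-h} to the solution and use the discrete analogue of \eqref{eq:Gweak}:
\[
\frac{\alpha_{\cA,d}}{2}\|(\bzeta_h,\psi_h)\|_{\bH\times\rQ}\le \sup_{\cero\neq(\bxi_h,\varphi_h)}\frac{F(\bxi_h)+G(\varphi_h)}{\|(\bxi_h,\varphi_h)\|_{\bH\times\rQ}}\le \|F\|+\|G\|.
\]
Inserting $\|F\|=C_T\|\psi_D\|_{1/2,00;\Gamma_D}$ and $\|G\|=\|\cQ g\|_{0,4/3;\Omega}$, and bounding both coefficients by $\max\{1,C_T\}$, gives \eqref{eq:estability-h}.

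The only delicate point is that $\alpha_{\cA,d}$ must be independent of $h$, so that the smallness hypothesis and the stability constant are uniform in the mesh. This hinges on the $h$-independence of $\beta_d$ in Lemma~\ref{lem:inf-h}, which comes from the uniform bounds $C_{\rqu}$ and $C_\mP$. Since the constants in \eqref{eq:C-A} depend only on $\alpha_a$, $\beta_d$, $\|a\|$ and $\|c\|$, this uniformity carries over directly, and I expect no real obstacle here.
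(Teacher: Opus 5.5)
Your proposal is correct and follows essentially the same route as the paper: the discrete global inf-sup condition \eqref{eq:Au-inf-sup-h}, which holds under the smallness assumption on $\bu_h$, gives injectivity, and finite dimensionality upgrades this to bijectivity. The bound \eqref{eq:estability-h} then follows as in Theorem~\ref{th:solv} from $\|F\|=C_T\|\psi_D\|_{1/2,00;\Gamma_D}$ and $\|G\|=\|\cQ g\|_{0,4/3;\Omega}$; your additional remark on the $h$-independence of $\alpha_{\cA,d}$ is not needed for the statement itself, but it is consistent with the paper's use of Lemma~\ref{lem:inf-h}.
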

\begin{proof}
The proof follows from \eqref{eq:Au-inf-sup-h} and from the fact that, in finite-dimensional linear problems, surjectivity and injectivity are equivalent. In addition, \eqref{eq:estability-h} is obtained by using arguments similar to those in Theorem \ref{th:solv}.
\end{proof}

%%%%%%%%%%%%%%%%%%%%%%%%%%%%%%%%%
\section{Error estimates}
\label{sec:strang}
\begin{theorem}\label{thm:quasiopt}
    Let $(\bzeta^\star,\psi^\star), (\bzeta_h,\psi_h)$ be the unique solutions of \eqref{eq:weak} and \eqref{eq:weak-h}, respectively. Then, and assuming that \eqref{eq:small-u-h} holds, there exists $\hat{C}>0$, independent of $h$, such that  
\begin{equation}\label{eq:quasiopt}
\|\bzeta^\star-\bzeta_h\|_\bH+\|\psi^\star-\psi_h\|_\rQ%\\[1ex]
\ds   \leq \hat{C}\left\{\mathrm{dist}(\bzeta^\star,\bH_h)+\mathrm{dist}(\psi^\star,\rQ_h) \right\}.
\end{equation}
\end{theorem}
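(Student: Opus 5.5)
This is a standard C\'ea-type argument for a Petrov--Galerkin problem with a uniformly stable discrete form. It rests on three ingredients: Galerkin orthogonality, the discrete global inf-sup condition \eqref{eq:Au-inf-sup-h}, and the boundedness of $A_{\bu_h}$.

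\emph{Step 1: Galerkin orthogonality.} Since $\bH_h\subseteq\bH$ and $\rQ_h\subseteq\rQ$, we may test \eqref{eq:Gweak} with discrete functions. The discrete scheme \eqref{eq:weak-h} uses the same forms, the same velocity $\bu_h$ and the same functionals $F,G$, with the identical regularised load $\cQ g$. Subtracting the two problems gives
\[
A_{\bu_h}((\bzeta^\star-\bzeta_h,\psi^\star-\psi_h),(\bxi_h,\varphi_h))=0\qquad\forall (\bxi_h,\varphi_h)\in\bH_h\times\rQ_h .
\]
There is no consistency error, precisely because the continuous problem \eqref{eq:weak} already contains $\cQ g$ and $\bu_h$.

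\emph{Step 2: boundedness of $A_{\bu_h}$.} From \eqref{eq:bound-forms-and-functionals},
\[
|A_{\bu_h}((\btau,\chi),(\bxi,\varphi))|\le \|A_{\bu_h}\|\,\|(\btau,\chi)\|_{\bH\times\rQ}\|(\bxi,\varphi)\|_{\bH\times\rQ}
\]
with $\|A_{\bu_h}\|\le \|a\|+2+\|c\|+\|d\|$. The constant $\|d\|=\|\bu_h\|_{0,4;\Omega}/\underline\varepsilon$ is uniformly bounded thanks to \eqref{eq:small-u-h}. Hence $\|A_{\bu_h}\|$ is independent of $h$.

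\emph{Step 3: the C\'ea argument.} Fix arbitrary $(\btau_h,\chi_h)\in\bH_h\times\rQ_h$ and split
\[
(\bzeta^\star-\bzeta_h,\psi^\star-\psi_h)=(\bzeta^\star-\btau_h,\psi^\star-\chi_h)+(\btau_h-\bzeta_h,\chi_h-\psi_h).
\]
Apply \eqref{eq:Au-inf-sup-h} to the discrete part, then use Step 1 to replace it by the approximation-error part in the numerator, then Step 2. This gives
\[
\tfrac{\alpha_{\cA,d}}{2}\|(\btau_h-\bzeta_h,\chi_h-\psi_h)\|\le \sup_{(\bxi_h,\varphi_h)}\frac{A_{\bu_h}((\bzeta^\star-\btau_h,\psi^\star-\chi_h),(\bxi_h,\varphi_h))}{\|(\bxi_h,\varphi_h)\|}\le \|A_{\bu_h}\|\,\|(\bzeta^\star-\btau_h,\psi^\star-\chi_h)\|.
\]
The triangle inequality then yields
\[
\|(\bzeta^\star-\bzeta_h,\psi^\star-\psi_h)\|\le\Bigl(1+\frac{2\|A_{\bu_h}\|}{\alpha_{\cA,d}}\Bigr)\|(\bzeta^\star-\btau_h,\psi^\star-\chi_h)\|.
\]
Taking the infimum separately over $\btau_h$ and $\chi_h$, which is allowed because the product norm is a sum, gives \eqref{eq:quasiopt} with $\hat C=1+2\|A_{\bu_h}\|/\alpha_{\cA,d}$.

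The only delicate point is \emph{$h$-independence of the constants}. We must check that $\alpha_{\cA,d}$ from \cite[Theorem 3.5]{correa22} depends only on $\alpha_a$, $\beta_d$, $\|a\|$ and $\|c\|$, which holds since $\beta_d$ is $h$-independent by Lemma~\ref{lem:inf-h}. We must also check that $\|d\|$ is controlled by the smallness assumption \eqref{eq:small-u-h}. Both follow from what was established before Theorem~\ref{th:solv-h}, so no genuine obstacle is expected.
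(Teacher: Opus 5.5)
Your proposal is correct and follows essentially the same route as the paper. The paper also splits the error into an approximation part and a discrete part, uses exact Galerkin orthogonality (both problems share $\cQ g$ and $\bu_h$), continuity of the forms with $\|d\|\le \alpha_{\cA,d}/2$ from \eqref{eq:small-u-h}, the discrete inf-sup condition \eqref{eq:Au-inf-sup-h}, and the triangle inequality. The only cosmetic difference is that the paper bounds the $A$- and $d_{\bu_h}$-contributions separately, obtaining the constant $\max\{1,\|a\|,\|c\|,\alpha_{\cA,d}/2\}$ where you write $\|A_{\bu_h}\|$.
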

\begin{proof}
  %{Follows from the classical Babu\v{s}ka--Brezzi theory.} {[perhaps still write sketch of main steps]}
  %%%%%%%%%%%%%
 {The proof follows the same approach as  \cite[Theorem 4.7]{Camano21}. For a given $(\hat{\bxi}_h,\hat{\varphi}_h)\in \bH_h\times\rQ_h$, we decompose the errors as 
\begin{equation}
\begin{aligned}\label{eq:decompose}
\boldsymbol{e}_{\bzeta^\star}
:= \boldsymbol{\eta}_{\bzeta^\star} + \boldsymbol{\chi}_{\bzeta^\star}
&= (\bzeta^\star - \hat{\boldsymbol{\xi}}_h)
 + (\hat{\boldsymbol{\xi}}_h - \bzeta_h), \\
e_{\psi^\star}
:= \eta_{\psi^\star} + \chi_{\psi^\star}
&= (\psi^\star - \hat{\varphi}_h)
 + (\hat{\varphi}_h - \psi_h).
\end{aligned}
\end{equation}
Next, we apply the Galerkin orthogonality property to $A_{\boldsymbol{u}}(\bullet,\bullet)$ (cf.~\eqref{eq:Au} and \eqref{eq:Gweak})
and to its corresponding discrete formulation and substitute the decomposition of $\psi^\star$ from \eqref{eq:decompose}, to obtain 
\begin{equation}\label{eq:ortho}
A\bigl((\boldsymbol{e}_{\bzeta^\star}, e_{\psi^\star}),
       (\boldsymbol{\xi}_h,\varphi_h)\bigr)
+ d_{\bu_h}(\boldsymbol{\xi}_h;e_{\psi^\star}) = 0.
\end{equation}
Then, using the definition of $A_{\boldsymbol{u}_h}(\bullet,\bullet)$ in the above equality, together with \eqref{eq:decompose}, we get
\begin{equation*}
A_{\boldsymbol{u}_h}\bigl((\boldsymbol{\chi}_{\bzeta^\star},\chi_{\psi^\star}),
                          (\boldsymbol{\xi}_h,\varphi_h)\bigr)= -A\bigl((\boldsymbol{\eta}_{\bzeta^\star},\eta_{\psi^\star}),
(\boldsymbol{\xi}_h,\varphi_h)\bigr)-d_{\bu_h}(\boldsymbol{\xi}_h,\eta_{\psi^\star}).
\end{equation*}
Moreover, by expanding $A(\bullet,\bullet)$ (cf.\eqref{eq:A}), we have for all
$(\boldsymbol{\xi}_h,\varphi_h)\in \bH_h\times\rQ_h$
\begin{equation*}
A_{\boldsymbol{u}_h}
\bigl((\boldsymbol{\chi}_{\bzeta^\star},\chi_{\psi^\star}),
      (\boldsymbol{\xi}_h,\varphi_h)\bigr)
=
- a(\boldsymbol{\eta}_{\bzeta^\star},\boldsymbol{\xi}_h) - b(\boldsymbol{\eta}_{\bzeta^\star},\varphi_h) - b(\boldsymbol{\xi}_h,\eta_{\psi^\star}) - c(\eta_{\psi^\star},\varphi_h) - d_{\bu_h}(\boldsymbol{\xi}_h,\eta_{\psi^\star}).
\end{equation*}
 We now bound the right-hand side using the continuity of 
$a(\bullet,\bullet)$, $b(\bullet,\bullet)$, $c(\bullet,\bullet)$, and $d_{\bu_h}(\bullet,\bullet)$ (cf.~\eqref{eq:bound-forms-and-functionals}), 
the smallness assumption on $\bu_h$ \eqref{eq:small-u-h}, this gives
\begin{equation*}
|A_{\boldsymbol{u}_h}
\bigl((\boldsymbol{\chi}_{\bzeta^\star},\chi_{\psi^\star}),
      (\boldsymbol{\xi}_h,\varphi_h)\bigr)|\leq \max\!\left\{1,\|a\|,\|c\|,\tfrac{\alpha_{\cA,d}}{2}\right\}
\,\bigl\|(\eta_{\bzeta^\star},\eta_{\psi^\star})\bigr\|_{\bH\times\rQ}
\,\bigl\|(\boldsymbol{\xi}_h,\varphi_h)\bigr\|_{\bH\times\rQ} .
\end{equation*}
For the left-hand side, we take the supremum over $(\bxi_h,\varphi_h)$, and apply the discrete inf-sup condition of $A_{\bu_h}(\bullet,\bullet)$ (cf. \eqref{eq:Au-inf-sup-h}), which gives
\begin{equation}\label{eq:chi-bound}
\|(\boldsymbol{\chi}_{\bzeta^\star},\chi_{\psi^\star})\|
\leq
\tfrac{2}{\alpha_{\cA,d}}
\max\!\left\{1,\|a\|,\|c\|,\tfrac{\alpha_{\cA,d}}{2}\right\}
\|(\boldsymbol{\eta}_{\bzeta^\star},\eta_{\psi^\star})\|_{\bH\times\rQ} .
\end{equation}
Finally, combining \eqref{eq:decompose}, the triangle inequality, and \eqref{eq:chi-bound}, we obtain 
\begin{equation*}
\bigl\|(\boldsymbol{e}_{\bzeta^\star},e_{\psi^\star})\bigr\|
\lesssim
\bigl\|(\boldsymbol{\eta}_{\bzeta^\star},\eta_{\psi^\star})\bigr\|_{\bH\times\rQ},
\end{equation*}
and since $(\hat{\bxi}_h,\hat{\varphi}_h)\in \bH_h\times\rQ_h$ is arbitrary, the result follows.}
\end{proof}
%%%%%%%%%%%%%%%%%%%%%%%%%%%%%%%%%%%%%%%%%%%%%%%%%%%%%%%%%%%%%%%%%%%%%%%%%%%%%%%%%%%%%%%%%%%%%%%%%%%%%%%%%%%%%%%%%%%%%%%%%%%%%%%%%%

We are in a position to state our main result. It is a perturbed quasi-optimality result.
\begin{theorem}\label{thm:errorestimate}
  Let {$g\in\widetilde\rH^{-1}(\Omega)$} and $\psi_D\in \rH^{1/2}_{00}(\Gamma_D)$ be given. 
  Suppose that $\psi\in \rH^1_D(\Omega)$ solves~\eqref{eq:PBstrong:primal} and set $\bzeta = \nabla\psi-\bu\psi\in\rL^2(\Omega)$, i.e., $(\bzeta,\psi)$ solves~\eqref{eq:PBstrong}.
  If $(\bzeta_h,\psi_h)\in \bH_h\times \rQ_h$ is the solution of~\eqref{eq:weak-h}, 
  then, 
  \begin{align}
    \nonumber  &\|\psi-\psi_h\|_{0,4;\Omega} + \|\bzeta-\bzeta_h\|_{0,2;\Omega} 
    \\
    \nonumber&\quad \lesssim \min_{(v_h,\btau_h)\in\bH_h\times \rQ_h} (\|\psi-v_h\|_{0,4;\Omega} + \|\bzeta-\btau_h\|_{0,2;\Omega})
    \\
    &\qquad+(\|g\|_{\rH^{-1}_D(\Omega)}+\|\psi_D\|_{1/2,00;\Gamma_D})\|\bu-\bu_h\|_{0,4;\Omega} + {\min_{\chi_h\in\rQ_h}\|\kappa\psi-\chi_h\|_{0,4/3;\Omega}}.
  \end{align}
\end{theorem}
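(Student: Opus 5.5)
The plan is to insert the auxiliary regularised solution $(\bzeta^\star,\psi^\star)$ of \eqref{eq:weak} between the exact pair $(\bzeta,\psi)$ and the discrete pair $(\bzeta_h,\psi_h)$, and to use the triangle inequality
\begin{align*}
\|\psi-\psi_h\|_{0,4;\Omega} + \|\bzeta-\bzeta_h\|_{0,2;\Omega}
\leq \bigl(\|\psi-\psi^\star\|_{0,4;\Omega} + \|\bzeta-\bzeta^\star\|_{0,2;\Omega}\bigr)
+ \bigl(\|\psi^\star-\psi_h\|_{\rQ} + \|\bzeta^\star-\bzeta_h\|_{\bH}\bigr).
\end{align*}
Here we use that the $\bL^2$ norm is dominated by the $\bH$ norm. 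Before this, I would check that $\psi^\star$ from the primal problem \eqref{eq:aux:weak} and $\bzeta^\star=\varepsilon\nabla\psi^\star-\bu_h\psi^\star$ give the solution of the mixed problem \eqref{eq:weak}. This follows by uniqueness in Theorem~\ref{th:solv}: $\bzeta^\star\in\bL^2$, $\vdiv\bzeta^\star=\kappa\psi^\star-\cQ g\in\rL^{4/3}$, and the boundary conditions are encoded as in \eqref{eq:PBstrong-star}.

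The first bracket is controlled directly by the second estimate of Lemma~\ref{lem:quasiopt:primal}. That bound contains $\min_{\chi_h}\|\kappa\psi-\chi_h\|_{\rH^{-1}_D(\Omega)}$, which is at most $C_\mathrm{Sob}\min_{\chi_h}\|\kappa\psi-\chi_h\|_{0,4/3;\Omega}$ by \eqref{eq:dualAndSobolevEstimate}. It also contains $\min_{\btau_h}\|\bzeta-\btau_h\|_{0,2;\Omega}$, which already appears on the right-hand side of the claim.

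For the second bracket I apply Theorem~\ref{thm:quasiopt}, which gives a bound $\hat C(\mathrm{dist}_\bH(\bzeta^\star,\bH_h)+\mathrm{dist}_\rQ(\psi^\star,\rQ_h))$. The $\rQ$ part is straightforward: $\mathrm{dist}(\psi^\star,\rQ_h)\leq \|\psi-v_h\|_{0,4;\Omega}+\|\psi-\psi^\star\|_{0,4;\Omega}$, and the last term is handled by the previous step.

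The main obstacle is the $\bH$-distance of $\bzeta^\star$. This distance involves $\|\vdiv(\bzeta^\star-\btau_h)\|_{0,4/3;\Omega}$, but the exact $\vdiv\bzeta=\kappa\psi-g$ is not in $\rL^{4/3}$, so the path through $\bzeta$ is not available. My approach is to choose $\btau_h=\Pi^{\mathsf{RT}}_{\rqu}\bzeta^\star$. By \eqref{eq:commuting},
\begin{align*}
\vdiv(\bzeta^\star-\btau_h)=(1-\mP_h^k)\vdiv\bzeta^\star=(1-\mP_h^k)(\kappa\psi^\star),
\end{align*}
because $\cQ g\in\rQ_h$ is annihilated by $1-\mP_h^k$. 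Using Lemma~\ref{lem:boundednessL2projector}, this is bounded by $\min_{\chi_h}\|\kappa\psi^\star-\chi_h\|_{0,4/3;\Omega}$. That quantity is at most $\min_{\chi_h}\|\kappa\psi-\chi_h\|_{0,4/3;\Omega}+\|\kappa\|_{0,2;\Omega}\|\psi-\psi^\star\|_{0,4;\Omega}$, and the second term is again covered by Lemma~\ref{lem:quasiopt:primal}. This is exactly why the $\rL^{4/3}$ best approximation of $\kappa\psi$ appears in the statement.

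For the $\bL^2$ part of the $\bH$-distance, the quasi-interpolator is $\bL^2$-stable by \eqref{eq:quasi-bdd} and invariant on $\bH_h$ by \eqref{eq:quasi-invariant}. Hence
\begin{align*}
\|\bzeta^\star-\Pi^{\mathsf{RT}}_{\rqu}\bzeta^\star\|_{0,2;\Omega}\lesssim \min_{\btau_h}\|\bzeta^\star-\btau_h\|_{0,2;\Omega}\leq \min_{\btau_h}\|\bzeta-\btau_h\|_{0,2;\Omega}+\|\bzeta-\bzeta^\star\|_{0,2;\Omega},
\end{align*}
and the last term is once more controlled by Lemma~\ref{lem:quasiopt:primal}.

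Collecting these estimates gives the claim. The constants depend on $C_{\rqu}$, $C_\mP$, $C_\mathrm{Sob}$, $\|\kappa\|_{0,2;\Omega}$ and $\hat C$, but not on $h$.
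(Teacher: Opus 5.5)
Your proposal is correct and follows essentially the same route as the paper. You insert $(\bzeta^\star,\psi^\star)$ and apply Theorem~\ref{thm:quasiopt} with $\btau_h=\Pi^{\mathsf{RT}}_{\rqu}\bzeta^\star$, use the commuting property to reduce the divergence error to $(1-\mP_h^k)(\kappa\psi^\star)$ and bound it via Lemma~\ref{lem:boundednessL2projector}, then close with Lemma~\ref{lem:quasiopt:primal} and \eqref{eq:dualAndSobolevEstimate}; your explicit check that the primal-derived pair is the mixed solution of \eqref{eq:weak} makes precise a step the paper uses tacitly.
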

\begin{proof}
  Let $(\bzeta^\star,\psi^\star)\in \bH\times\rQ$ denote the unique solution of~\eqref{eq:weak}. From Theorem~\ref{thm:quasiopt} it follows that
  \begin{align*}
    \|\bzeta^\star-\bzeta_h\|_{0,2;\Omega} + \|\psi^\star-\psi_h\|_{0,4;\Omega} 
    &\leq \|\bzeta^\star-\bzeta_h\|_{\bH} + \|\psi^\star-\psi_h\|_{0,4;\Omega}
    \\
    &\lesssim \|(1-\Pi^{\mathsf{RT}}_{\rqu})\bzeta^\star\|_{\bH} + \|\psi^\star-\varphi_h\|_{0,4;\Omega}
  \end{align*}
  for all $\varphi_h\in\rQ_h$.
  Since $\Pi^{\mathsf{RT}}_{\rqu}$ is a projection, using the properties~\eqref{eq:quasi-invariant}--\eqref{eq:commuting} we obtain for any $\btau_h\in\bH_h$ that
  \begin{align*}
    \|(1-\Pi^{\mathsf{RT}}_{\rqu})\bzeta^\star\|_{\bH} &\leq \|(1-\Pi^{\mathsf{RT}}_{\rqu})(\bzeta^\star-\btau_h)\|_{0,2;\Omega} 
    + \|(1-\mP_h^k)\vdiv\bzeta^\star\|_{0,4/3;\Omega} \\
    &\lesssim \|\bzeta^\star-\btau_h\|_{0,2;\Omega} + \|(1-\mP_h^k)(\kappa\psi^\star-\cQ g)\|_{0,4/3;\Omega}\\
    &= \|\bzeta^\star-\btau_h\|_{0,2;\Omega} + \|(1-\mP_h^k)(\kappa\psi^\star)\|_{0,4/3;\Omega} \\
    &\leq \|\bzeta-\btau_h\|_{0,2;\Omega} + \|(1-\mP_h^k)(\kappa\psi)\|_{0,4/3;\Omega}
    \\
    &\qquad + \|\bzeta-\bzeta^\star\|_{0,2;\Omega} + \|(1-\mP_h^k)(\kappa(\psi-\psi^\star))\|_{0,4/3;\Omega}
%    \\
%    &\lesssim \|\bzeta-\btau_h\|_{0,2;\Omega} + \|(1-\mP_h)(\kappa\psi)\|_{0,2;\Omega}
%    + \|\bzeta-\bzeta^\star\|_{0,2;\Omega} + \|\psi-\psi^\star\|_{0,4;\Omega}.
  \end{align*}
  {Using Lemma~\ref{lem:boundednessL2projector} we stress that for any $\chi_h\in\rQ_h$,
    \begin{align*}
      \|(1-\mP_h^k)(\kappa\psi)\|_{0,4/3;\Omega} &= \|(1-\mP_h^k)(\kappa\psi-\chi_h)\|_{0,4/3;\Omega} \lesssim \|\kappa\psi-\chi_h\|_{0,4/3;\Omega}, \\
      \|(1-\mP_h^k)(\kappa\psi-\kappa\psi^\star)\|_{0,4/3;\Omega} &\lesssim \|\kappa(\psi-\psi^\star)\|_{0,4/3;\Omega}
      \lesssim \|\kappa\|_{0,2;\Omega} \|\psi-\psi^\star\|_{0,4;\Omega}.
  \end{align*}
  Combining all of the estimates above, together with the triangle inequality, shows that}
  \begin{align*}
    \|\bzeta^\star-\bzeta_h\|_{0,2;\Omega} + \|\psi^\star-\psi_h\|_{0,4;\Omega}
    &\lesssim \|\bzeta-\btau_h\|_{0,2;\Omega} + \|\psi-\varphi_h\|_{0,4;\Omega} + {\|\kappa\psi-\chi_h\|_{0,4/3;\Omega}}
    \\
    &\qquad+ \|\bzeta-\bzeta^\star\|_{0,2;\Omega} + \|\psi-\psi^\star\|_{0,4;\Omega},
  \end{align*}
  for all $(\btau_h,\varphi_h)\in\bH_h\times \rQ_h$ {and $\chi_h\in \rQ_h$}.
  Finally, {the triangle inequality yields}
  \begin{align*}
    \|\bzeta-\bzeta_h\|_{0,2;\Omega} + \|\psi-\psi_h\|_{0,4;\Omega} 
    &\leq \|\bzeta-\bzeta^\star\|_{0,2;\Omega} + \|\psi-\psi^\star\|_{0,4;\Omega} +  \|\bzeta^\star-\bzeta_h\|_{0,2;\Omega} + \|\psi^\star-\psi_h\|_{0,4;\Omega}, 
  \end{align*}
  and an application of Lemma~\ref{lem:quasiopt:primal} {and the estimate~\eqref{eq:dualAndSobolevEstimate}} conclude the proof.
\end{proof}

\section{Stenberg-type postprocessing}
\label{sec:stenbergL2}
In this section we provide a postprocessing technique that delivers an approximate solution to $
\psi^\star$ with asymptotic better accuracy than the approximate solution $\psi_h$. We follow 
\cite[Section 4]{fuhrer2024mixed} for the low-regular forcing case (see also, e.g.,   \cite{chen1995lp} for postprocessing of mixed formulations in $\rL^p$ spaces). 
{The analytical results rely on duality arguments and regularity properties of solutions to the dual problem.
For the latter we assume a regularity shift. In order to utilise the additional regularity we require an additional non-standard approximation property for the dual of the projection operator $\cQ$.
In subsection~\ref{sec:constructionQ} we show how to construct such an operator.
Then, we present duality results, the adapted Stenberg postprocessing scheme, and accuracy enhancement results in Subsection~\ref{sec:adjointAndStenberg}.
}

\subsection{Construction of $\cQ$}\label{sec:constructionQ}

We first recall from \cite{fuhrer2024mixed,fuhrer2024quasiinterpolatorsapplicationpostprocessingfinite} the construction of the weighted Cl\'ement quasi-interpolator $J_h: \rH_D^1(\Omega) \to \mathrm{P}_1(\mathcal{T}_h) \cap \rH_D^1({\Omega})$. 
Let $V_0$ denote the set of {interior vertices of $\mathcal{T}_h$ and let $V_N$ denote the set of vertices of $\mathcal{T}_h$ on the boundary $\Gamma\setminus\overline{\Gamma_D}$.
  We assume that each $z\in V_N$ has a neighbouring interior node $z_0\in V_0$ in the sense that there exists $K\in\mathcal{T}_h$ with $z,z_0\in V_K$ where $V_K$ denotes the set of vertices of $K$.
}
{Given the nodal basis $\{\eta_z\}_{z\in V_0\cup V_N}$} of $\mathrm{P}_1(\mathcal{T}_h) \cap \rH_D^1({\Omega})$, i.e., $\eta_z(z') = \delta_{z,z'}$ for all $z,z' \in {V_0\cup V_N}$, the quasi-interpolator $J_h$ is defined by
\[ v \mapsto J_h(v) = \sum_{z\in V_0} (\int_\Omega v\psi_z)\eta_z, \quad \text{for } v \in \rL^2(\Omega).\]
{For an interior vertex $z\in V_0$, let $\mathcal{T}_z$ denote the patch of all elements sharing the node $z\in V_0$. 
For a boundary vertex $z\in V_N$, we set $\mathcal{T}_z = \mathcal{T}_{z_0}$.}
{Further, $\{\psi_z\}_{z\in V_0\cup V_N}$}
is a collection of weight functions given by 
\[ \psi_z|_K =
\begin{cases}
  \tfrac{\alpha_{z,K}}{|K|}  & K \in {\mathcal{T}_z}, \\
0 & \text{otherwise}.
\end{cases}
\] 
For each {$z \in V_0\cup V_N$}, 
the {real values $(\alpha_{z,K})_{K\in\mathcal{T}_z}$} are chosen such that $\sum\limits_{K\in \mathcal{T}_z}\alpha_{z,K}\,s_K=z$, $\sum\limits_{K\in \mathcal{T}_z}\alpha_{z,K}=1$, where $s_K$ denotes the centre of mass of $K\in \mathcal{T}_h$. 
{Due to the simple geometric observation that not all centre of mass are colinear resp. coplanar, it follows that there exist $(\alpha_{z,K})_{K\in\mathcal{T}_z}$ satisfying the above relations, though they might not be unique, see~\cite[Remark~10]{fuhrer2024mixed}.
  If $z\in V_0$ then we can even choose all $\alpha_{z,K}$, $K\in\mathcal{T}_z$ to be non-negative and $\|\psi_z\|_{0,\infty;\Omega} \simeq \tfrac{1}{|\Omega_z|}$, see~\cite{fuhrer2024mixed}. 
If $z\in V_N$, then some coefficients might be negative whilst others are positive. Anyway, we can always choose them such that $\|\psi_z\|_{0,\infty;\Omega} \simeq \tfrac{1}{|\Omega_z|}$ holds true as well.}
{Note that}, by the above definition, we have $\psi_z \in \mathrm{P}_0(\mathcal{T}_h)$.

We also denote by 
$\eta_K = \Pi_{z\in V_K} \eta_z$ an element bubble function.
{Further, let $\eta_j$, $j=1,\dots,N:=\dim(\rQ_h)$ denote a basis of of $\rQ_h=\rP_k(\mathcal{T}_h)$}
and define the bubble function space
\begin{align*}
  \rP_k^b(K) = \{\eta_{K}\eta_j\,\vert\, j=1,\dots,N\} \subset \rH^1_0(K).
\end{align*}
Let $\chi_{K,j}\in \rP_k^b(K)$ ($j=1,\dots,N$) denote a dual basis in the sense that
\begin{align*}
  \int_K \chi_{K,j}\eta_k = \delta_{jk}, \quad j,k=1,\dots,N.
\end{align*}
Further, we define the bounded map 
\[B_hv = \sum_{K\in \mathcal{T}_h}\sum_{j=1}^N\int_K(v\eta_j)\chi_{K,j}.\]
With these ingredients, the operator $\cQ$ is defined as follows
\[ \cQ =  J_h' + (1-J_h)'B_h' : {\rH^{-1}_D(\Omega)} \to \rQ_h = \rP_k(\mathcal{T}_h) \subset {\rH^{-1}_D(\Omega)}.\]

%%%%%%%%%%%%%%%%%%%%%%%%%%%%%%%%%%%%%%%%%%%%%%%%%%%%%%%%
We now {prove} some properties of the map $\cQ$. %The proof in the $\rL^p$ context is quite similar to the Hilbert case from \cite[Theorem 12]{fuhrer2024mixed}. 
{The arguments closely follow the proofs of~\cite[Theorem~11 and~12]{fuhrer2024mixed} and~\cite[Theorem~18]{fuhrer2024quasiinterpolatorsapplicationpostprocessingfinite} and we therefore only focus on essential differences.}
%, after realising that $\rH^{-1}(\Omega) \subset \rW^{-(1+s),p}(\Omega)$. 
\begin{lemma}[Further properties of $\cQ$]\label{lem:prop-Q}
    In addition to the properties \eqref{eq:prop0-Q}, the map $\cQ$ satisfies
%    \begin{itemize}
%        \item[(a)] $\cQ \phi = \phi $ \quad for all $\phi \in \rP_0(\cT_h)$,
%        \item[(b)] $\|\cQ\phi\|_{0,p';\Omega} \lesssim \|\phi\|_{0,p';\Omega}$ \quad  for all $\phi \in \rL^{p'}(\Omega)$, 
%         \item[(c)] $\|(1-\cQ)\phi\|_{\rH^{-1}(\Omega)} \lesssim h^{\min\{s,s_\Omega\}}
%        \|\phi\|_{\widetilde{\rH}^{-1+\min\{s,s_\Omega\}}(\Omega)}$ \quad  for all $\phi \in \widetilde{\rH}^{-1+s}(\Omega)$, 
%      \item[(d)] 
  \begin{align*}
    \|(1-\cQ')\phi\|_{0,2;\Omega} &\lesssim h \|\nabla \phi\|_{0,2;\Omega} \quad  \forall \phi \in \rH^1_D(\Omega),
    \\
    \|(1-\cQ')\phi\|_{1,2;\Omega} &\lesssim h^s \|\phi\|_{1+s,2;\Omega} \quad  \forall \phi \in \rH^{1+s}(\Omega)\cap \rH^1_D(\Omega),\  s\in[0,1].
  \end{align*} 
%    \end{itemize}
\end{lemma}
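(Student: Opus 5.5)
We first compute the adjoint explicitly. Since $\cQ = J_h' + (1-J_h)'B_h'$, we have $\cQ' = J_h + B_h(1-J_h)$ as a map from $\rH^1_D(\Omega)$ into $\rH^1_D(\Omega)$. Here $J_h$ maps into $\rP_1(\cT_h)\cap\rH^1_D(\Omega)$, and $B_h$ maps into element bubbles in $\rH^1_0$, so the range is contained in $\rH^1_D(\Omega)$. Hence
\[
(1-\cQ')\phi = (1-B_h)(1-J_h)\phi .
\]
Both estimates therefore reduce to two ingredients: local stability of $B_h$ in $\rL^2$ and in $\rH^1$ (with an inverse estimate), and approximation properties of $J_h$.

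\emph{Step 1 (local bounds for $B_h$).} By scaling on the reference element, with $\|\chi_{K,j}\|_{0,\infty;K}\simeq |K|^{-1}$, we expect
\[
\|B_h v\|_{0,2;K}\lesssim \|v\|_{0,2;K}, \qquad \|\nabla B_h v\|_{0,2;K}\lesssim h_K^{-1}\|v\|_{0,2;K}.
\]
Here the sum over $j$ should be read as running over local basis functions of $\rP_k(K)$, so that only finitely many terms are involved per element.

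\emph{Step 2 (properties of $J_h$).} We use the properties of $J_h$ established in \cite{fuhrer2024mixed,fuhrer2024quasiinterpolatorsapplicationpostprocessingfinite}. The boundary-node weights $\psi_z$, $z\in V_N$, also satisfy $\|\psi_z\|_{0,\infty;\Omega}\simeq|\Omega_z|^{-1}$, and the moment conditions $\sum_K\alpha_{z,K}=1$ and $\sum_K\alpha_{z,K}s_K=z$ make $J_h$ reproduce affine functions locally away from $\Gamma_D$. This yields local $\rH^1$-stability and the Bramble--Hilbert type bounds
\[
\|(1-J_h)\phi\|_{0,2;K}\lesssim h_K\|\nabla\phi\|_{0,2;\omega_K}, \qquad \|(1-J_h)\phi\|_{1,2;\Omega}\lesssim h^s\|\phi\|_{1+s,2;\Omega},
\]
for $s\in[0,1]$. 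The second bound is obtained for $s=0$ (stability) and $s=1$ (exactness on $\rP_1$), and then by interpolation for intermediate $s$. Near $\Gamma_D$, the fact that $\phi$ vanishes there is used through a Poincar\'e/Friedrichs argument on patches touching $\Gamma_D$.

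\emph{Step 3 (combination).} For the $\rL^2$ estimate we write
\[
\|(1-B_h)(1-J_h)\phi\|_{0,2;\Omega}\lesssim\|(1-J_h)\phi\|_{0,2;\Omega}\lesssim h\|\nabla\phi\|_{0,2;\Omega}.
\]
For the $\rH^1$ estimate, set $w=(1-J_h)\phi$. Then
\[
\|\nabla B_h w\|_{0,2;K}\lesssim h_K^{-1}\|w\|_{0,2;K}\lesssim h_K^{-1}h_K^{1+s}|\phi|_{1+s,2;\omega_K}
\]
when $s=1$, and $\lesssim \|\nabla\phi\|_{0,2;\omega_K}$ when $s=0$. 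Summing over elements with finite patch overlap and then interpolating between $s=0$ and $s=1$ gives the claimed $h^s$ bound, since $(1-\cQ')$ is linear.

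The main obstacle is Step 2 near the boundary. It has to be checked that the non-standard treatment of $V_N$ vertices, where $\mathcal{T}_z=\mathcal{T}_{z_0}$ and some weights may be negative, still gives exact reproduction of $\rP_1$ on elements away from $\Gamma_D$. It also has to be checked that elements touching $\Gamma_D$, where $\eta_z$ is absent, are controlled via the boundary condition of $\phi$. I would first verify that $J_h p = p$ for affine $p$ on every element not touching $\overline{\Gamma_D}$, and then handle the remaining elements with a local Friedrichs inequality.
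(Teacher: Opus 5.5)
Your route is the paper's: $\cQ'=J_h+B_h(1-J_h)$, local $\rL^2$-stability of $B_h$ by scaling plus an inverse estimate, affine reproduction of $J_h$ (for $z\in V_N$ the two moment conditions give $\int_\Omega p\,\psi_z=p(z)$ for affine $p$), Bramble--Hilbert, and interpolation in $s$. Writing $1-\cQ'=(1-B_h)(1-J_h)$ is a tidier form of the paper's remark that $\cQ'$ preserves affine functions.

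The genuine gap is the step you flag yourself. On elements $K$ touching $\overline{\Gamma_D}$, a local Friedrichs inequality gains only one power of $h$: $\|(1-J_h)\phi\|_{0,2;K}+h_K\|\nabla(1-J_h)\phi\|_{0,2;K}\lesssim h_K\|\nabla\phi\|_{0,2;\Omega_K}$. It cannot be iterated, because $\nabla\phi$ does not vanish on $\Gamma_D$. This suffices for the first estimate and for $s=0$. At $s=1$, however, Step 3 needs $\|(1-J_h)\phi\|_{0,2;K}\lesssim h_K^2|\phi|_{2,2;\Omega_K}$ on every element. Summed over the $\mathcal{O}(h)$-wide layer along $\Gamma_D$, the Friedrichs bound gives only $\mathcal{O}(h^{1/2})\|\phi\|_{2,2;\Omega}$, so interpolation would not reach $h^s$.

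The missing idea is to keep the affine comparison on these elements and control the defect through nodal values. Take an affine $p$ with $\|\phi-p\|_{0,2;\Omega_K}\lesssim h_K^2|\phi|_{2,2;\Omega_K}$ and $\|\phi-p\|_{0,\infty;K}\lesssim h_K^{2-d/2}|\phi|_{2,2;\Omega_K}$ (Bramble--Hilbert and $\rH^2\subset C^0$, $d\le 3$). Then $(p-J_hp)|_K=\sum_{z\in V_K\cap\overline{\Gamma_D}}p(z)\lambda_z$, with $\lambda_z$ the hat function at $z$. Since $\phi(z)=0$, we have $|p(z)|=|p(z)-\phi(z)|$, whence $\|p-J_hp\|_{0,2;K}+h_K\|\nabla(p-J_hp)\|_{0,2;K}\lesssim h_K^2|\phi|_{2,2;\Omega_K}$. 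The paper itself only asserts the $h_K^2\|D^2v\|$ bound via affine preservation, which fails on exactly these elements, so you were right to single this out.

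Second, the lemma is stated ``in addition to \eqref{eq:prop0-Q}'', and the paper's proof verifies \eqref{eq:prop0-Q} for this particular $\cQ$. Your ingredients cover the boundedness parts: $\rL^2$-stability of $J_h$ and $B_h$, and the case $s=0$, which gives $\cQ'$ bounded on $\rH^1_D(\Omega)$ and hence $\cQ$ bounded on $\rH^{-1}_D(\Omega)$. The projection property, however, is never checked. It follows from the dual basis: $\int_K B_hw\,q=\int_K w\,q$ for all $q\in\rP_k(K)$. Hence for $\phi\in\rQ_h$ and $v\in\rH^1_D(\Omega)$ one gets $\int_\Omega\cQ\phi\,v=\int_\Omega\phi\,J_hv+\int_\Omega\phi\,(1-J_h)v=\int_\Omega\phi\,v$, i.e.\ $\cQ\phi=\phi$ and therefore $\cQ^2=\cQ$.
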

\begin{proof} 
  First, let us verify the projection property. 
  By construction we have that $\int_K B_h(1-J_h)v \eta_j = \int_K (1-J_h)v\eta_j$, $j=1,\dots,N$. 
  Therefore, 
  \begin{align*}
    \int_\Omega \cQ \phi v = \int_\Omega \phi(J_h v+ B_h(1-J_h)v) = \int_\Omega \phi v 
    \quad\forall \phi \in \rQ_h, v\in \rH^1_D(\Omega). 
  \end{align*}
  That is, $\cQ \phi = \phi$ for all $\phi\in\rQ_h$.

  Second, we prove boundedness of $\cQ'$ and therefore $\cQ$ as an operator from $\rL^2(\Omega)$ to $\rL^2(\Omega)$. 
  To do so, note that by standard scaling arguments one proves that $\|B_h v\|_{0,2;K} \lesssim \|v\|_{0,2;K}$ for all $v\in \rL^2(K)$. 
  Then, with the arguments from~\cite[Theorem~11]{fuhrer2024mixed} it follows that
  \begin{align*}
    \|J_h v\|_{0,2;K} \lesssim \|v\|_{0,2;\Omega_K}
  \end{align*}
  where $\Omega_K\subset \Omega$ is the domain associated to the patch of $K$ if $K$ does not contain vertices from $V_N$ or the domain associated to the second-order patch of $K$ if $K$ contains vertices of $V_N$. Combining the latter estimates we conclude that
  \begin{align*}
    \|\cQ'v\|_{0,2;K} \lesssim \|v\|_{0,2;\Omega_K}
    \quad\text{and further}\quad
    \|\cQ'v\|_{0,2;\Omega} \lesssim \|v\|_{0,2;\Omega} \quad\forall v\in \rL^2(\Omega). 
  \end{align*}
  
  Third, we stress that by construction $J_h$ preserves affine functions (on patches). Therefore, see~\cite[Theorem~11]{fuhrer2024mixed}, 
  this implies that
  \begin{align*}
    \|(1-J_h)v\|_{0,2;K} + h_K\|\nabla(1-J_h)v\|_{0,2;K} \lesssim h_K \|\nabla v\|_{0,2;\Omega_K}, \quad \forall v\in \rH^1_D(\Omega). 
  \end{align*}
  as well as 
  \begin{align*}
    \|(1-J_h)v\|_{0,2;K} + h_K\|\nabla(1-J_h)v\|_{0,2;K} \lesssim h_K^2 \|D^2 v\|_{0,2;\Omega_K}, \quad \forall v\in \rH^2(\Omega)\cap \rH^1_D(\Omega).
  \end{align*}
  Together with an inverse inequality we obtain that
  \begin{align*}
    \|\nabla Q_h'v\|_{0,2;K} &\leq \|\nabla J_hv\|_{0,2;K} + \|\nabla B_h(1-J_h)v\|_{0,2;K} 
    \lesssim \|\nabla J_hv\|_{0,2;K} + h_K^{-1}\|B_h(1-J_h)v\|_{0,2;K} 
    \\
    &\lesssim \|v\|_{0,2;\Omega_K} + h_K^{-1}\|(1-J_h)v\|_{0,2;K}
    \lesssim \|v\|_{0,2;\Omega_K} \quad\forall v\in \rH^1_D(\Omega).
  \end{align*}
  We conclude that $\cQ'\colon \rH^1_D(\Omega)\to \rH^1_D(\Omega)$ is bounded which implies the boundedness of $\cQ$ in the dual norm. 

  Finally, to see the stated approximation properties we stress that $\cQ'$ preserves, such as $J_h$, affine functions on patches. Therefore, with the very same arguments to show the approximation properties for $J_h$ can be used to show the ones for $\cQ'$.
  The estimate in the non-integer Sobolev spaces follows by interpolation,  which in turn finishes the proof.
\end{proof}

\noindent\textbf{Alternative construction of $\cQ$ {for $k>0$}.}
{For higher polynomial orders ($k>0$) one can choose $J_h$ to be, e.g., the Scott--Zhang quasi-interpolator into $\mathrm{P}_1(\mathcal{T}_h)\cap \rH^1_D(\Omega)$ in the definitions above.}

\subsection{Adjoint problem and accuracy enhancement}\label{sec:adjointAndStenberg}
Let us first develop Aubin--Nitsche-type estimates. 
To that end, consider the adjoint problem
\begin{subequations} \label{eq:PBstrong:adjoint}
    \begin{align}
        \kappa\phi  - \vdiv\varepsilon\nabla \phi-\bu_h\cdot\nabla\phi &= f && \text{in } \Omega,\\
        \psi &= 0 && \text{on } \Gamma_D, \\
        \varepsilon\nabla \phi \cdot \bn &= 0 && \text{on } \Gamma_N.
    \end{align}
\end{subequations}

We make the following standing assumption throughout this section:
Suppose that $\kappa$, $\varepsilon$, $\bu_h$, and $\Omega$ are such that there exists $0<s_\Omega\leq 1$, $C_\Omega>0$ such that for any $f\in \rL^2(\Omega)$, the unique weak solution of~\eqref{eq:PBstrong:adjoint} satisfies
\begin{align}\label{eq:regularity:adjoint}
  {\|\varepsilon\nabla \phi\|_{s_\Omega,2;\Omega}} \lesssim \|\phi\|_{1+s_\Omega,2;\Omega} \leq C_\Omega \|f\|_{0,2;\Omega}.
\end{align}

\begin{lemma}\label{lem:adjoint:continuous}
  Let $\psi$ and $\psi^\star$ denote the unique solutions of Problems~\eqref{eq:primal:weak} and~\eqref{eq:aux:weak}, respectively.
  Then, there exist constants $C_1>0$, $C_2>0$ such that
  \begin{align*}
    \|\psi-\psi^\star\|_{0,2;\Omega} &\leq C_1(\|g\|_{\rH^{-1}_D(\Omega)}+\|\psi_D\|_{1/2,00;\Gamma_D})\|\bu-\bu_h\|_{0,4;\Omega}
  \\
  &\qquad + C_2 h^{s_\Omega}\biggl(\min_{\btau_h\in\bH_h}\|\bzeta-\btau_h\|_{0,2;\Omega} + \min_{\chi_h\in\rQ_h}\|\kappa\psi-\chi_h\|_{\rH^{-1}_D(\Omega)}\biggr)
  \end{align*}
  where $\bzeta = \varepsilon\nabla \psi - \bu \psi $.
\end{lemma}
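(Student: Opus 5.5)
We split the error exactly as in the proof of Lemma~\ref{lem:quasiopt:primal}: let $\widetilde\psi^\star$ solve the auxiliary problem with velocity $\bu_h$ but the original load $g$, and write $\psi-\psi^\star = (\psi-\widetilde\psi^\star) + (\widetilde\psi^\star-\psi^\star)$.

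For the first difference we do not use duality. We use $\|\psi-\widetilde\psi^\star\|_{0,2;\Omega}\le\|\psi-\widetilde\psi^\star\|_{1,2;\Omega}$ and reuse the energy estimate from the proof of Lemma~\ref{lem:quasiopt:primal}. That estimate already gives the bound $C_1(\|g\|_{\rH^{-1}_D(\Omega)}+\|\psi_D\|_{1/2,00;\Gamma_D})\|\bu-\bu_h\|_{0,4;\Omega}$. The gain $h^{s_\Omega}$ is therefore only needed for the regularisation error $e:=\widetilde\psi^\star-\psi^\star\in\rH^1_D(\Omega)$. Subtracting the two auxiliary problems shows that $e$ satisfies
\[
\int_\Omega(\kappa e\varphi+\varepsilon\nabla e\cdot\nabla\varphi-\bu_h e\cdot\nabla\varphi)=\langle (1-\cQ)g,\varphi\rangle\qquad\forall\varphi\in\rH^1_D(\Omega).
\]

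Next comes the Aubin--Nitsche step. We take $\phi$ to be the solution of the adjoint problem \eqref{eq:PBstrong:adjoint} with $f=e$. This adjoint problem is exactly the formal adjoint of the bilinear form above, since $-\bu_h e\cdot\nabla\phi$ pairs with $-\bu_h\cdot\nabla\phi$. Testing the adjoint equation with $e$ and the error equation with $\phi$ gives
\[
\|e\|_{0,2;\Omega}^2=\langle(1-\cQ)g,\phi\rangle=\langle g,(1-\cQ')\phi\rangle .
\]
By the projection property, $\cQ'$ is also a projection, and the range of $1-\cQ'$ annihilates $\rQ_h$. As in Lemma~\ref{lem:quasiopt:primal}, we insert $g=\kappa\widetilde\psi^\star-\vdiv\widetilde\bzeta^\star$. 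We then subtract an arbitrary $\vdiv\btau_h$ with $\btau_h\in\bH_h$ (allowed because $\vdiv\bH_h\subset\rQ_h$) and an arbitrary $\chi_h\in\rQ_h$. This yields
\[
\|e\|^2_{0,2;\Omega}=\int_\Omega(\widetilde\bzeta^\star-\btau_h)\cdot\nabla(1-\cQ')\phi+\langle\kappa\widetilde\psi^\star-\chi_h,(1-\cQ')\phi\rangle ,
\]
where the first term comes from integration by parts, using $\btau_h\cdot\bn=0$ on $\Gamma_N$ and the conormal condition. Both terms are bounded by
\[
\bigl(\|\widetilde\bzeta^\star-\btau_h\|_{0,2;\Omega}+\|\kappa\widetilde\psi^\star-\chi_h\|_{\rH^{-1}_D(\Omega)}\bigr)\,\|(1-\cQ')\phi\|_{1,2;\Omega}.
\]
Lemma~\ref{lem:prop-Q} with $s=s_\Omega$, combined with the regularity shift \eqref{eq:regularity:adjoint}, gives $\|(1-\cQ')\phi\|_{1,2;\Omega}\lesssim h^{s_\Omega}\|\phi\|_{1+s_\Omega,2;\Omega}\lesssim h^{s_\Omega}\|e\|_{0,2;\Omega}$. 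Dividing by $\|e\|_{0,2;\Omega}$ then yields the $h^{s_\Omega}$ factor.

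Finally, we replace $(\widetilde\bzeta^\star,\widetilde\psi^\star)$ by $(\bzeta,\psi)$ using the triangle inequality. The extra terms are
\[
\|\bzeta-\widetilde\bzeta^\star\|_{0,2;\Omega}+\|\kappa\|_{0,2;\Omega}\|\psi-\widetilde\psi^\star\|_{1,2;\Omega},
\]
where the second one uses \eqref{eq:dualAndSobolevEstimate}. Both were already bounded in the proof of Lemma~\ref{lem:quasiopt:primal} by $(\|g\|_{\rH^{-1}_D(\Omega)}+\|\psi_D\|_{1/2,00;\Gamma_D})\|\bu-\bu_h\|_{0,4;\Omega}$. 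Multiplied by $h^{s_\Omega}\lesssim 1$, they are absorbed into the $C_1$ term.

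The main obstacle is the duality identity itself. We must justify that the adjoint problem with the advection term $-\bu_h\cdot\nabla\phi$ and $\bu_h\in\rL^4$ is well posed and is the true adjoint, so that the cross terms cancel exactly. We also need $\phi\in\rH^1_D(\Omega)$ regular enough for Lemma~\ref{lem:prop-Q} to apply. Both rest on the standing assumption \eqref{eq:regularity:adjoint} and on the coercivity argument of Proposition~\ref{prop:primal} applied to the transposed form.
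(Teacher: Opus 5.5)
Your proposal is correct and follows essentially the same route as the paper. Both use the splitting via $\widetilde\psi^\star$, the energy bound from Lemma~\ref{lem:quasiopt:primal} for $\psi-\widetilde\psi^\star$, and, for $\widetilde\psi^\star-\psi^\star$, a duality argument with the adjoint problem \eqref{eq:PBstrong:adjoint}, Lemma~\ref{lem:prop-Q} and the shift \eqref{eq:regularity:adjoint}; the only cosmetic difference is that the paper first bounds by $\|(1-\cQ)g\|_{\rH^{-1}_D(\Omega)}\|(1-\cQ')\phi\|_{1,2;\Omega}$ and then estimates $\|(1-\cQ)g\|_{\rH^{-1}_D(\Omega)}$ as in Lemma~\ref{lem:quasiopt:primal}, whereas you move $1-\cQ$ onto the test function and subtract $\vdiv\btau_h$ and $\chi_h$ directly, which is the transposed form of the same estimate.
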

\begin{proof}
  We write $\psi-\psi^\star = (\psi-\widetilde\psi^\star)+(\widetilde\psi^\star-\psi^\star)$ where $\widetilde\psi^\star$ is defined as in the proof of Lemma~\ref{lem:quasiopt:primal}.
  Recall that in the proof we have shown that
  \begin{align*}
    \|\psi-\widetilde\psi^\star\|_{1,2;\Omega} \lesssim (\|g\|_{\rH^{-1}_D(\Omega)}+\|\psi_D\|_{1/2,00;\Gamma_D})\|\bu-\bu_h\|_{0,4;\Omega}.
  \end{align*}
  By the Poincar\'e--Friedrichs inequality, the term $\|\psi-\widetilde\psi^\star\|_{0,2;\Omega}$ can be estimated by the same upper bound. 

  Consider the solution $\phi\in \rH_D^1(\Omega)$ of the adjoint problem~\eqref{eq:PBstrong:adjoint} with $f = \widetilde\psi^\star-\psi^\star$. Then, integrating by parts twice, we obtain that
  \begin{align*}
    \|\widetilde\psi^\star-\psi^\star\|_{0,2;\Omega}^2 &= \int_\Omega(\widetilde\psi^\star-\psi^\star)(-\vdiv(\varepsilon\nabla \phi)-\bu_h\cdot\nabla \phi + \kappa\phi) = \langle (1-\cQ)g, \phi\rangle
    \\
    &= \langle (1-\cQ)g, (1-\cQ')\phi\rangle \leq \|(1-\cQ)g\|_{\rH_D^{-1}(\Omega)} \|(1-\cQ')\phi\|_{1,2;\Omega}.
  \end{align*}
  Here, we have used the projection property $(1-\cQ)(1-\cQ) = (1-\cQ)$.
  The last term is further estimated using the approximation property of $\cQ'$ and the elliptic regularity assumption~\eqref{eq:regularity:adjoint} leading to
  \begin{align*}
    \|\widetilde\psi^\star-\psi^\star\|_{0,2;\Omega}^2 \lesssim \|(1-\cQ)g\|_{\rH_D^{-1}(\Omega)}h^{s_\Omega}\|\widetilde\psi^\star-\psi^\star\|_{0,2;\Omega}.
  \end{align*}
  Putting all estimates together we conclude that
  \begin{align*}
    \|\psi-\psi^\star\|_{0,2;\Omega} \leq \|\psi-\widetilde\psi^\star\|_{0,2;\Omega} + \|\widetilde\psi^\star-\psi^\star\|_{0,2;\Omega}
    \lesssim \|\bu-\bu_h\|_{0,4;\Omega} + h^{s_\Omega} \|(1-\cQ)g\|_{\rH_D^{-1}(\Omega)}.
  \end{align*}
  {Estimating $\|(1-\cQ)g\|_{\rH_D^{-1}(\Omega)}$ as in the proof of Lemma~\ref{lem:quasiopt:primal} gives
  \begin{align*}
    \|(1-\cQ)g\|_{\rH^{-1}_D(\Omega)} &\lesssim \|\widetilde\bzeta^\star-\btau_h\|_{0,2;\Omega} + \|\kappa\widetilde\psi^\star-\chi_h\|_{\rH^{-1}_D(\Omega)} \\
    &\leq \|\bzeta-\btau_h\|_{0,2;\Omega} + \|\kappa\psi-\chi_h\|_{\rH^{-1}_D(\Omega)} + \|\bzeta-\widetilde\bzeta^\star\|_{0,2;\Omega} 
    + \|\kappa(\psi-\widetilde\psi^\star)\|_{\rH^{-1}_D(\Omega)}
    \\
    &\lesssim \|\bzeta-\btau_h\|_{0,2;\Omega} + \|\kappa\psi-\chi_h\|_{\rH^{-1}_D(\Omega)} + \|\bzeta-\widetilde\bzeta^\star\|_{0,2;\Omega} 
    + \|\kappa(\psi-\widetilde\psi^\star)\|_{0,4/3;\Omega}
  \end{align*}
  for all $\btau_h\in \bH_h$, $\chi_h\in\rQ_h$, and with $\widetilde\bzeta_h = \varepsilon\nabla\widetilde\psi^\star - \bu_h\widetilde\psi^\star$.
  Finally, $\|\kappa(\psi-\widetilde\psi^\star)\|_{0,4/3;\Omega} \leq \|\kappa\|_{0,2;\Omega}\|\psi-\widetilde\psi^\star\|_{0,4;\Omega}
  \lesssim \|\psi-\widetilde\psi^\star\|_{1,2;\Omega}$.
  The proof is concluded by noting that as in Lemma~\ref{lem:quasiopt:primal}
  \begin{align*}
    \|\bzeta-\widetilde\bzeta^\star\|_{0,2;\Omega} + \|\psi-\widetilde\psi^\star\|_{1,2;\Omega} \lesssim \|\bu-\bu_h\|_{0,4;\Omega}
  \end{align*}
  and $h^{s_\Omega} \lesssim 1$.}
\end{proof}

Additionally to the assumption on the regularity shift for the adjoint problem~\eqref{eq:PBstrong:adjoint} we suppose that the solution to the adjoint problem enjoys the approximation estimate
\begin{align}\label{eq:regularity:adjoint:approx}
  \|(1-\mP_h^0)(\kappa\phi-\bu_h\cdot\nabla\phi)\|_{0,4/3;\Omega} \leq C h^{s_\Omega} \|\phi\|_{1+s_\Omega,2;\Omega}.
\end{align}
To see that the latter estimate is likely to hold, consider for simplicity that $\kappa$ and $\bu_h$ are piecewise constants. Then, 
\begin{align*}
  \|(1-\mP_h^0)(\kappa\phi-\bu_h\cdot\nabla\phi)\|_{0,4/3;\Omega} \leq \|\kappa\|_{0,2;\Omega}\|(1-\mP_h^0)\phi\|_{0,4;\Omega} + \|\bu_h\|_{0,4;\Omega} \|(1-\mP_h^0)\nabla\phi\|_{0,2;\Omega}. 
\end{align*}
Employing the regularity shift assumption for the adjoint problem and a Sobolev embedding we conclude that
\begin{align*}
  \|(1-\mP_h^0)\phi\|_{0,4;\Omega} + \|(1-\mP_h^0)\nabla\phi\|_{0,2;\Omega} \lesssim h^{s_\Omega}(\|\phi\|_{s_\Omega,4;\Omega} + \|\nabla \phi\|_{s_\Omega,2;\Omega}) \lesssim h^{s_\Omega} \|\phi\|_{1+s_\Omega,2;\Omega}.
\end{align*}

\begin{lemma}\label{lem:supercloseness}
  Let $\psi^\star$ denote the unique solution of Problem~\eqref{eq:aux:weak} and let 
  $(\bzeta_h,\phi_h)\in\bH_h\times\rQ_h$ denote the discrete solution of~\eqref{eq:weak-h}.
  Then, there exists constant $C>0$ such that
  \begin{align*}
    \|\mP_h^0(\psi^\star-\psi_h)\|_{0,2;\Omega} &\leq C h^{s_\Omega}
    \Big(\min_{(v_h,\btau_h)\in\bH_h\times \rQ_h} (\|\psi-v_h\|_{0,4;\Omega} + \|\bzeta-\btau_h\|_{0,2;\Omega})
    \\
    &\qquad+(\|g\|_{\rH^{-1}_D(\Omega)}+\|\psi_D\|_{1/2,00;\Gamma_D})\|\bu-\bu_h\|_{0,4;\Omega} + \min_{\chi_h\in\rQ_h}\|\kappa\psi-\chi_h\|_{0,4/3;\Omega}\Big),
  \end{align*}
  where $\bzeta = \varepsilon\nabla \psi - \bu \psi $.
\end{lemma}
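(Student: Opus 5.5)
We will argue by an Aubin--Nitsche duality for the discrete error, using the adjoint problem~\eqref{eq:PBstrong:adjoint} with right-hand side $f:=\mP_h^0(\psi^\star-\psi_h)\in\rL^2(\Omega)$. Let $\phi\in\rH^1_D(\Omega)$ be its solution, and set $\bxi:=\varepsilon\nabla\phi\in\bH$. We have $\bxi\cdot\bn=0$ on $\Gamma_N$ and $\vdiv\bxi=\kappa\phi-\bu_h\cdot\nabla\phi-f$. This divergence lies in $\rL^{4/3}(\Omega)$, because $\kappa\in\rL^2$, $\phi\in\rL^4$ and $\bu_h\in\rL^4$. By~\eqref{eq:regularity:adjoint} we have $\|\bxi\|_{s_\Omega,2;\Omega}+\|\phi\|_{1+s_\Omega,2;\Omega}\lesssim\|f\|_{0,2;\Omega}$. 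The error pair $(\boldsymbol{e}_\zeta,e_\psi):=(\bzeta^\star-\bzeta_h,\psi^\star-\psi_h)$ satisfies the Galerkin orthogonality~\eqref{eq:ortho}, that is, $A_{\bu_h}((\boldsymbol{e}_\zeta,e_\psi),(\btau_h,v_h))=0$ for all discrete test pairs. The plan is to express $\|f\|_{0,2;\Omega}^2=\int_\Omega e_\psi f$ through $A_{\bu_h}((\boldsymbol{e}_\zeta,e_\psi),(\bxi,\phi))$ plus correction terms.

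First, test $A_{\bu_h}$ against the continuous pair $(\bxi,-\phi)$, with the sign adjusted so that the adjoint equation is reproduced. Using $a(\boldsymbol{e}_\zeta,\bxi)=\int_\Omega\boldsymbol{e}_\zeta\cdot\nabla\phi=-\int_\Omega\phi\,\vdiv\boldsymbol{e}_\zeta$ (valid since $\phi|_{\Gamma_D}=0$ and $\boldsymbol{e}_\zeta\cdot\bn|_{\Gamma_N}=0$), and $b(\bxi,e_\psi)+d_{\bu_h}(\bxi,e_\psi)=\int_\Omega e_\psi(\vdiv\bxi+\bu_h\cdot\nabla\phi)$, one obtains $\int_\Omega e_\psi(\kappa\phi-f)$. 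The $b$- and $c$-terms of the second equation then cancel, leaving the identity $A_{\bu_h}((\boldsymbol{e}_\zeta,e_\psi),(\bxi,-\phi))=-\int_\Omega e_\psi f$. Because $f$ is piecewise constant, $\int_\Omega e_\psi f=\|f\|^2_{0,2;\Omega}$.

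Second, subtract the discrete test pair $(\Pi^{\mathsf{RT}}_{\rqu}\bxi,-\mP_h^k\phi)$ using orthogonality. The key terms are handled as follows.
\begin{itemize}
\item $a(\boldsymbol{e}_\zeta,\bxi-\Pi^{\mathsf{RT}}_{\rqu}\bxi)\lesssim\|\boldsymbol{e}_\zeta\|_{0,2}h^{s_\Omega}\|\bxi\|_{s_\Omega,2}$, using the approximation property of the quasi-interpolator for $\rH^{s_\Omega}$ fields.
\item $b(\bxi-\Pi^{\mathsf{RT}}_{\rqu}\bxi,e_\psi)=\int_\Omega e_\psi(1-\mP_h^k)\vdiv\bxi$, by the commuting property~\eqref{eq:commuting}. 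Since $(1-\mP_h^k)f=0$, this equals $\int_\Omega e_\psi(1-\mP_h^k)(\kappa\phi-\bu_h\cdot\nabla\phi)$. Because $k\ge0$, this is bounded by $\|e_\psi\|_{0,4}\,\|(1-\mP_h^0)(\kappa\phi-\bu_h\cdot\nabla\phi)\|_{0,4/3}$, which~\eqref{eq:regularity:adjoint:approx} controls by $h^{s_\Omega}\|e_\psi\|_{0,4}\|f\|_{0,2}$.
\item $d_{\bu_h}(\bxi-\Pi^{\mathsf{RT}}_{\rqu}\bxi,e_\psi)\lesssim\|\bu_h\|_{0,4}\|e_\psi\|_{0,4}h^{s_\Omega}\|\bxi\|_{s_\Omega,2}$.
\item $b(\boldsymbol{e}_\zeta,\phi-\mP_h^k\phi)=\int_\Omega\vdiv\boldsymbol{e}_\zeta\,(1-\mP_h^k)\phi$. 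Here we use $\vdiv\boldsymbol{e}_\zeta=\kappa e_\psi+(1-\mP_h^k)(\kappa\psi^\star)$ up to discrete parts; the latter identity follows from the two equations and $\cQ g\in\rQ_h$. This term is then bounded by $\|\vdiv\boldsymbol{e}_\zeta\|_{0,4/3}\|(1-\mP_h^k)\phi\|_{0,4}\lesssim h^{s_\Omega}\|\boldsymbol{e}_\zeta\|_{\bH}\|f\|_{0,2}$, using a Sobolev embedding of $\rH^{1+s_\Omega}$.
\item $c(e_\psi,\phi-\mP_h^k\phi)$ is bounded in the same way.
\end{itemize}
Dividing by $\|f\|_{0,2;\Omega}$ yields $\|f\|_{0,2;\Omega}\lesssim h^{s_\Omega}(\|\boldsymbol{e}_\zeta\|_{\bH}+\|e_\psi\|_{0,4;\Omega})$.

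Finally, bound $\|\boldsymbol{e}_\zeta\|_{\bH}+\|e_\psi\|_{0,4;\Omega}$ by combining Theorem~\ref{thm:quasiopt} with the intermediate estimates in the proof of Theorem~\ref{thm:errorestimate}. That proof in fact bounds the full $\bH$-norm of $\bzeta^\star-\bzeta_h$ via $\|(1-\Pi^{\mathsf{RT}}_{\rqu})\bzeta^\star\|_\bH$, and then applies Lemma~\ref{lem:quasiopt:primal} together with~\eqref{eq:dualAndSobolevEstimate}. This produces exactly the bracket on the right-hand side of the statement.

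The main obstacle is the $\rL^p$ bookkeeping in the terms involving $\phi-\mP_h^k\phi$ and $\vdiv\boldsymbol{e}_\zeta$. We need $\|(1-\mP_h^k)\phi\|_{0,4}\lesssim h^{s_\Omega}\|\phi\|_{1+s_\Omega,2}$, which requires the embedding $\rH^{s_\Omega}\subset\rL^4$ to be available, or else a loss to be absorbed. The fallback is to bound these terms by $h\|\nabla\phi\|$-type estimates on the $\rL^2$ part. A further difficulty is justifying the $\rH^{s_\Omega}$ approximation property of $\Pi^{\mathsf{RT}}_{\rqu}$ for $\bxi$, whose divergence is only in $\rL^{4/3}$.
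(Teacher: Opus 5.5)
Your proposal is essentially the paper's proof. It uses the same duality with $f=\mP_h^0(\psi^\star-\psi_h)$, Galerkin orthogonality against the quasi-interpolated adjoint pair, the commuting property with $(1-\mP_h^k)f=0$ and \eqref{eq:regularity:adjoint:approx} for the divergence term, and the bound on $\|(\bzeta^\star-\bzeta_h,\psi^\star-\psi_h)\|_{\bH\times\rQ}$ taken from the proof of Theorem~\ref{thm:errorestimate}; your term-by-term estimates are just the continuity of $A_{\bu_h}$ written out, and using $\mP_h^k\phi$ instead of $\mP_h^0\phi$ changes nothing.

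One sign needs fixing: with $\bxi=\varepsilon\nabla\phi$ the cancellation requires the test pair $(\bxi,\phi)$, which gives $-\int_\Omega e_\psi f$, not $(\bxi,-\phi)$ (the paper's $(\br,\phi)$ with $\br=-\varepsilon\nabla\phi$ has the analogous slip and should read $(\br,-\phi)$). The obstacles you flag are harmless: $\|(1-\mP_h^0)\phi\|_{0,4;\Omega}\lesssim h^{s_\Omega}\|\phi\|_{1+s_\Omega,2;\Omega}$ follows by interpolating between the bounds $\|\phi\|_{1,2;\Omega}$ and $h\|\phi\|_{2,2;\Omega}$, and the $\rL^2$ estimate for $\bxi-\Pi^{\mathsf{RT}}_{\rqu}\bxi$ reduces, via \eqref{eq:quasi-invariant}--\eqref{eq:quasi-bdd}, to a best approximation of an $\rH^{s_\Omega}$-regular field that never involves the divergence.
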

\begin{proof}
  Let $\phi\in \rH_D^1(\Omega)$ denote the unique solution of~\eqref{eq:PBstrong:adjoint} with 
  $$f := \mP_h^0(\psi^\star-\psi_h).$$
  Set $\br = -\varepsilon\nabla \phi$ and note that $\vdiv\br \in \rL^{4/3}(\Omega)$, hence, $\br\in \bH$.
  We have that
  \begin{align*}
    \|f\|_{0,2;\Omega}^2 &= \int_\Omega (\psi^\star-\psi_h)f = \int_\Omega (\psi^\star-\psi_h)(\vdiv\br +\frac1\varepsilon \bu_h\cdot\br + \kappa \phi)
    \\
    &= A_{\bu_h}( (\bzeta^\star-\bzeta_h,\psi^\star-\psi_h),(\br,\phi)),
  \end{align*}
  where in the last identity we used that {$\varepsilon\nabla \phi + \br = \cero$} and, therefore, $\int_\Omega(\bzeta^\star-\bzeta_h)(\varepsilon\nabla\phi+\br) = 0$. 
  Employing Galerkin orthogonality we further obtain with continuity of $A_{\bu_h}(\cdot,\cdot)$ that
  \begin{align*}
    \|f\|_{0,2;\Omega}^2 &= A_{\bu_h}( (\bzeta^\star-\bzeta_h,\psi^\star-\psi_h),(\br-\br_h,\phi-\phi_h))
    \\
    &\lesssim \|(\bzeta^\star-\bzeta_h,\psi^\star-\psi_h)\|_{\bH\times\rQ}
    \|(\br-\br_h,\phi-\phi_h)\|_{\bH\times \rQ}
  \end{align*}
  for all $(\br_h,\phi_h)\in \bH_h\times \rQ_h$. For the remainder of the proof we consider $\br_h = \Pi^{\mathsf{RT}}\br$ and $\phi_h = \mP_h^0\phi$.
  First, observe that
  \begin{align*}
    \|(\bzeta^\star-\bzeta_h,\psi^\star-\psi_h)\|_{\bH\times\rQ} 
    &\lesssim 
    \min_{(v_h,\btau_h)\in\bH_h\times \rQ_h} (\|\psi-v_h\|_{0,4;\Omega} + \|\bzeta-\btau_h\|_{0,2;\Omega})
    \\
    &\quad+(\|g\|_{\rH^{-1}_D(\Omega)}+\|\psi_D\|_{1/2,00;\Gamma_D})\|\bu-\bu_h\|_{0,4;\Omega} + \min_{\chi_h\in\rQ_h}\|\kappa\psi-\chi_h\|_{0,4/3;\Omega}, 
  \end{align*}
  which has already be shown in the proof of Theorem~\ref{thm:errorestimate}.
  
  It remains to estimate $\|(\br-\br_h,\phi-\phi_h)\|_{\bH\times \rQ}$. 
  Clearly, by assumption~\eqref{eq:regularity:adjoint} and the Sobolev embedding, 
  \begin{align*}
    \|\br-\br_h\|_{0,2;\Omega} + \|\psi-\psi_h\|_{0,4;\Omega} \lesssim h^{s_\Omega} \|\phi\|_{1+s_\Omega,2;\Omega} \lesssim h^{s_\Omega} \|f\|_{0,2;\Omega}.
  \end{align*}
  Note that 
  $$\vdiv(\br-\br_h) = {(1-\mP_h^k)}\vdiv\br = {(1-\mP_h^k)}(f+\bu_h\cdot\nabla \phi-\kappa\phi) = {(1-\mP_h^k)}(\bu_h\cdot\nabla \phi-\kappa\phi),$$ 
  with the latter identity following from the fact that $f$ is piecewise constant. 
  Employing assumption~\eqref{eq:regularity:adjoint:approx} we conclude that
  \begin{align*}
    \|\vdiv(\br-\br_h)\|_{0,4/3;\Omega} &= \|{(1-\mP_h^k)}(\bu_h\cdot\nabla \phi-\kappa\phi)\|_{0,4/3;\Omega} \\
    &\lesssim \|{(1-\mP_h^0)}(\bu_h\cdot\nabla \phi-\kappa\phi)\|_{0,4/3;\Omega}\lesssim h^{s_\Omega}\|f\|_{0,2;\Omega}.
  \end{align*}
  Putting the last two estimates together we infer that
  \begin{align*}
    \|(\br-\br_h,\phi-\phi_h)\|_{\bH\times \rQ} \lesssim h^{s_\Omega} \|f\|_{0,2;\Omega}.
  \end{align*}
  Overall, we see that
  \begin{align*}
    \|f\|_{0,2;\Omega}^2 &\lesssim \|(\bzeta^\star-\bzeta_h,\psi^\star-\psi_h)\|_{\bH\times\rQ}
    \|(\br-\br_h,\phi-\phi_h)\|_{\bH\times \rQ} 
    \\
    &\lesssim h^{s_\Omega}\|f\|_{0,2;\Omega}
    \Big(\min_{(v_h,\btau_h)\in\bH_h\times \rQ_h} (\|\psi-v_h\|_{0,4;\Omega} + \|\bzeta-\btau_h\|_{0,2;\Omega})
    \\
    &\qquad\qquad+(\|g\|_{\rH^{-1}_D(\Omega)}+\|\psi_D\|_{1/2,00;\Gamma_D})\|\bu-\bu_h\|_{0,4;\Omega} + \min_{\chi_h\in\rQ_h}\|\kappa\psi-\chi_h\|_{0,4/3;\Omega}\Big),
  \end{align*}
  which gives the stipulated estimate.
\end{proof}

Let us now consider a discrete auxiliary local problem on each $K \in \cT_h$ (which arises from the pseudo potential flux constitutive equation) similarly to the classical Stenberg postprocess \cite{stenberg1991postprocessing}, but adapted to our case with advecting velocity. We define $\psi^\sharp_h (K) \in \mathrm{P}_{k+1}(K)$ by  
\begin{equation}\label{eq:postprocess} \begin{split}
  \int_K \varepsilon \nabla \psi^\sharp_h \cdot \nabla v_h &= \int_K \bzeta_h\cdot \nabla v_h + \int_K(\bu_h\cdot\nabla v_h) \psi_h
%+ \int_{\Gamma_N}(\bzeta_h\cdot \bn)v_h 
\qquad \forall v_h \in \mathrm{P}_{k+1}(K),
\\
  \Pi_K^0\psi^\sharp_h &= \Pi_K^0\psi_h.  
\end{split}\end{equation}

\begin{theorem}\label{thm:postprocess}
  Under the aforegoing assumptions, 
  \begin{align*}
    \|\psi-\psi^\sharp_h\|_{0,2;\Omega} &\leq C(\|g\|_{\rH^{-1}_D(\Omega)}+\|\psi_D\|_{1/2,00;\Gamma_D})\|\bu-\bu_h\|_{0,4;\Omega}
    \\
    &+C h^{s_\Omega}
    \Big(\min_{{(\btau_h,v_h)}\in\bH_h\times \rQ_h} (\|\psi-v_h\|_{0,4;\Omega} + \|\bzeta-\btau_h\|_{0,2;\Omega})
    \\
    &\qquad+(\|g\|_{\rH^{-1}_D(\Omega)}+\|\psi_D\|_{1/2,00;\Gamma_D})\|\bu-\bu_h\|_{0,4;\Omega} + \min_{\chi_h\in\rQ_h}\|\kappa\psi-\chi_h\|_{0,4/3;\Omega}\Big)
    \\
    &+Ch\min_{v_h\in \mathrm{P}_{k+1}(\cT_h)} \|\nabla(\psi-v_h)\|_{0,2;\Omega}.
  \end{align*}
\end{theorem}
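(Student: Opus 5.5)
We split the error by the triangle inequality as
\[
\|\psi-\psi^\sharp_h\|_{0,2;\Omega}\le \|\psi-\psi^\star\|_{0,2;\Omega}+\|\psi^\star-\psi^\sharp_h\|_{0,2;\Omega}.
\]
The first term is handled directly by Lemma~\ref{lem:adjoint:continuous}. It yields the $\|\bu-\bu_h\|_{0,4;\Omega}$ contribution and an $h^{s_\Omega}$ times best-approximation contribution. The $\rH^{-1}_D$ norm of $\kappa\psi-\chi_h$ is bounded by $C_\mathrm{Sob}\|\kappa\psi-\chi_h\|_{0,4/3;\Omega}$ via \eqref{eq:dualAndSobolevEstimate}. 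For the second term we work element by element. On each $K$ we write
\[
\psi^\star-\psi^\sharp_h=(1-\Pi^0_K)(\psi^\star-\psi^\sharp_h)+\Pi^0_K(\psi^\star-\psi_h),
\]
using the constraint $\Pi^0_K\psi^\sharp_h=\Pi^0_K\psi_h$. Summing over $K$, the second piece is exactly $\|\mP_h^0(\psi^\star-\psi_h)\|_{0,2;\Omega}$, which Lemma~\ref{lem:supercloseness} bounds by $h^{s_\Omega}$ times the bracket in the statement.

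For the oscillatory piece we use the Poincar\'e inequality on $K$:
\[
\|(1-\Pi^0_K)(\psi^\star-\psi^\sharp_h)\|_{0,2;K}\lesssim h_K\|\nabla(\psi^\star-\psi^\sharp_h)\|_{0,2;K}.
\]
Let $v_h\in \rP_{k+1}(K)$ be arbitrary and set $w_h:=\psi^\sharp_h-v_h\in\rP_{k+1}(K)$. From \eqref{eq:postprocess} and the identity $\varepsilon\nabla\psi^\star=\bzeta^\star+\bu_h\psi^\star$ we obtain
\[
\int_K\varepsilon\nabla(\psi^\star-\psi^\sharp_h)\cdot\nabla w_h=\int_K(\bzeta^\star-\bzeta_h)\cdot\nabla w_h+\int_K(\bu_h\cdot\nabla w_h)(\psi^\star-\psi_h).
\]
Testing with $w_h$ and using $\varepsilon\ge\underline\varepsilon$, H\"older's inequality, and the local inverse estimate $\|\nabla w_h\|_{0,4;K}\lesssim h_K^{-n/4}\|\nabla w_h\|_{0,2;K}$ gives
\[
\|\nabla w_h\|_{0,2;K}\lesssim \|\nabla(\psi^\star-v_h)\|_{0,2;K}+\|\bzeta^\star-\bzeta_h\|_{0,2;K}+\|\bu_h\|_{0,4;K}\,\|\nabla w_h\|_{0,4;K}\,\|\psi^\star-\psi_h\|_{0,2;K}/\|\nabla w_h\|_{0,2;K}.
\]
This advective term is the main obstacle. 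The inverse estimate loses a power of $h_K$, so we instead bound it as follows. We move $\bu_h$ onto the error using $\|\bu_h\|_{0,4}$ and estimate $\|\psi^\star-\psi_h\|_{0,4;K}$ times $\|\nabla w_h\|_{0,2;K}$. This gives
\[
\|\nabla w_h\|_{0,2;K}\lesssim\|\nabla(\psi^\star-v_h)\|_{0,2;K}+\|\bzeta^\star-\bzeta_h\|_{0,2;K}+\|\bu_h\|_{0,4;\Omega}\|\psi^\star-\psi_h\|_{0,4;K}.
\]
Here $\|\bu_h\|_{0,4;\Omega}$ is small by \eqref{eq:small-u-h}.

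Multiplying by $h_K$ and summing, we obtain
\[
h\|\nabla_h(\psi^\star-\psi^\sharp_h)\|_{0,2;\Omega}\lesssim h\min_{v_h}\|\nabla(\psi^\star-v_h)\|_{0,2;\Omega}+h\bigl(\|\bzeta^\star-\bzeta_h\|_{0,2}+\|\psi^\star-\psi_h\|_{0,4}\bigr).
\]
The last group is controlled through Theorems~\ref{thm:quasiopt} and~\ref{thm:errorestimate}, as in the proof of Lemma~\ref{lem:supercloseness}, with $h\le h^{s_\Omega}$ up to constants. It therefore merges into the $h^{s_\Omega}$ bracket. Finally we replace $\psi^\star$ by $\psi$ in the gradient best-approximation term using
\[
h\|\nabla(\psi-\psi^\star)\|_{0,2;\Omega}\lesssim h\,\|\psi-\psi^\star\|_{1,2;\Omega},
\]
which Lemma~\ref{lem:quasiopt:primal} bounds by terms already present, namely the $\|\bu-\bu_h\|$ term and $h$ times best-approximation terms, again absorbed using $h\lesssim h^{s_\Omega}$.
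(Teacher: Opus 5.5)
Your argument is essentially the paper's: the same three-way split into $\psi-\psi^\star$, $\mP_h^0(\psi^\star-\psi_h)$ and the elementwise oscillation, handled by Lemma~\ref{lem:adjoint:continuous}, Lemma~\ref{lem:supercloseness}, and a local Poincar\'e inequality plus the stability bound for \eqref{eq:postprocess}. The paper dismisses that stability bound as ``standard arguments''; you actually derive it. The inverse-estimate detour is unnecessary, since the H\"older $4$--$2$--$4$ split is immediate.

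There is one slip to fix. Once you replace $\|\bu_h\|_{0,4;K}$ by $\|\bu_h\|_{0,4;\Omega}$, summing over $K$ produces $(\sum_K\|\psi^\star-\psi_h\|_{0,4;K}^2)^{1/2}$. That quantity is only bounded by $(\#\cT_h)^{1/4}\|\psi^\star-\psi_h\|_{0,4;\Omega}$. Instead, keep the local norm and use Cauchy--Schwarz over the elements, $\sum_K\|\bu_h\|_{0,4;K}^2\|\psi^\star-\psi_h\|_{0,4;K}^2\le\|\bu_h\|_{0,4;\Omega}^2\|\psi^\star-\psi_h\|_{0,4;\Omega}^2$. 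Alternatively, sum the local identities and test globally with the broken $w_h$, which is what the paper's global bound implicitly does.
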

\begin{proof}
  Note that
  \begin{align*}
    \|\psi-\psi^\sharp_h\|_{0,2;\Omega} \leq \|\psi-\psi^\star\|_{0,2;\Omega} + \|\mP_h^0(\psi^\star-\psi^\sharp_h)\|_{0,2;\Omega}
    + \|(1-\mP_h^0)(\psi^\star-\psi^\sharp_h)\|_{0,2;\Omega}.
  \end{align*}
  The first term on the right-hand side is estimated with Lemma~\ref{lem:adjoint:continuous}. For the second term 
  we recall that $\mP_h^0\psi^\sharp_h = \mP_h^0\psi_h$ by construction of the postprocessed solution. Then, we employ Lemma~\ref{lem:supercloseness}.
  For the last term, note that $\|(1-\mP_h^0)(\psi^\star-\psi^\sharp_h)\|_{0,2;\Omega} \lesssim h \|\nabla(\psi^\star-\psi^\sharp_h)\|_{0,2;\Omega}$.
  Some standard arguments then show that {with $\bzeta^\star = \varepsilon\nabla\psi^\star-\bu_h\psi^\star$}
  \begin{align*}
    \|\nabla(\psi^\star-\psi^\sharp_h)\|_{0,2;\Omega} \lesssim \min_{v_h\in \mathrm{P}_{k+1}(\cT_h)} \|\nabla(\psi^\star-v_h)\|_{0,2;\Omega}
    + \|\bzeta^\star-\bzeta_h\|_{0,2;\Omega} + \|\bu_h\|_{0,4;\Omega}\|\psi^\star-\psi_h\|_{0,4;\Omega}.
  \end{align*}
  A triangle inequality and uniform boundedness of $\|\bu_h\|_{0,4;\Omega}$ further yields
  \begin{align*}
    &\min_{v_h\in \mathrm{P}_{k+1}(\cT_h)} \|\nabla(\psi^\star-v_h)\|_{0,2;\Omega}
    + \|\bzeta^\star-\bzeta_h\|_{0,2;\Omega} + \|\bu_h\|_{0,4;\Omega}\|\psi^\star-\psi_h\|_{0,4;\Omega}
    \\
    &\lesssim \min_{v_h\in \mathrm{P}_{k+1}(\cT_h)} \|\nabla(\psi-v_h)\|_{0,2;\Omega}
    + \|\bzeta-\bzeta_h\|_{0,2;\Omega} + \|\psi-\psi_h\|_{0,4;\Omega}
  + \|\bzeta-\bzeta^\star\|_{0,2;\Omega} + \|\psi-\psi^\star\|_{0,4;\Omega}.
  \end{align*}
  The last two terms are estimated by Lemma~\ref{lem:quasiopt:primal}.

  Combining all the results and noting that $h\lesssim h^{s_\Omega}$ concludes the proof.
\end{proof}

{Note that if $s_\Omega=1$, and $(\bzeta,\psi)$ is sufficiently smooth, as well as $\|\bu-\bu_h\|_{0,4;\Omega} = \mathcal{O}(h^{k+2})$, then, 
\begin{equation}\label{eq:ideal}
    \|\psi-\psi^\sharp_h\|_{0,2;\Omega} = \mathcal{O}(h^{k+2}),
\end{equation}
by the last theorem.}

%%%%%%%%%%%%%%%%%%%%%%
\section{Numerical experiments}\label{sec:results}
We present simple computational tests that illustrate the properties of the proposed mixed finite element method as well as the postprocess mechanism. All examples, including the action of the map $\cQ$ and the local Stenberg-type postprocess, are implemented in the open source finite element library \texttt{FEniCS} \cite{alnaes15}.

\medskip
\noindent\textbf{Example 1.} First we assess the convergence of the finite element discretisation using manufactured solutions of different regularity on simple domains. We consider on $\Omega = (0,1)^2$ the smooth and singular  solutions to the primal governing PDE
\begin{equation}\label{eq:manuf}\psi_{\mathrm{ex}}(x,y) = \sin(\pi x)\sin(\pi y) \quad \text{and}\quad 
\psi_{\mathrm{ex}}(x,y) = |x-y|^{\frac34}\sin(\pi x)\sin(\pi y),\end{equation}
and use a smooth and divergence-free vector field as external velocity 
\[ \bu_{\mathrm{ex}}(x,y) = \begin{pmatrix} \cos(\pi x)\sin(\pi y) \\ -\sin(\pi x)\cos(\pi y)\end{pmatrix}.\]
An exact mixed variable $\bzeta_{\mathrm{ex}}$, an exact source $g_{\mathrm{ex}}$, and exact Dirichlet datum $\psi_D$ and (non-homogeneous) Neumann datum $\bzeta\cdot \bn = \zeta_N$ are manufactured from these exact primal solutions and considering for this test simply unitary parameters $\varepsilon = \kappa = 1$.  

A sequence of successively refined uniform meshes is constructed and on each refinement level we compute $\cQ g_{\mathrm{ex}}$ and with it we generate approximate solutions $(\bzeta_h,\psi_h) \in \bH_h\times \rQ_h$  with the mixed finite element method \eqref{eq:weak-h} (using lowest-order Raviart--Thomas spaces), and taking $\bu_h$ as the $\bL^2(\Omega)$ projection of $\bu$ onto {$\bM_h = [\rP_1(\cT_h)]^2$ (required  by the postprocess as indicated in \eqref{eq:ideal})}. With the obtained discrete pseudo potential flux, at each mesh refinement level we also compute a post-processed double layer potential $\psi^\sharp_h$ from the auxiliary problem \eqref{eq:postprocess}. 
Errors  between the discrete and exact solutions are defined in the following way 
\[e_{\vdiv_{4/3}}(\bzeta):= \|\bzeta_h-\bzeta_{\mathrm{ex}}\|_{\vdiv_{4/3},\Omega}, \quad 
e_{0,4}(\psi):= \|\psi_h-\psi_{\mathrm{ex}}\|_{0,4;\Omega}, \quad e_{0}(\psi^\sharp):= \|\psi^\sharp_h-\psi_{\mathrm{ex}}\|_{0,2;\Omega},\]
and we compute experimental order of convergence as usual 
\[\texttt{EoC}= \frac{\log[e_{i+1}(\bullet)/e_i(\bullet)]}{\log(h_{i+1}/h_i)}, \]
where $e_i(\bullet)$ and $h_i$ denote an individual error and mesh size associated with the $i$-th refinement level, respectively. We remark that for the second expression in \eqref{eq:manuf} the exact flux $\bzeta_{\mathrm{ex}}$ is not regular enough to compute the full $e_{\vdiv_{4/3}}(\bzeta)$ error, so in that case we only report $e_{0}(\bzeta):=\|\bzeta_h-\bzeta_{\mathrm{ex}}\|_{0,2;\Omega}$. %Similarly, for the rough case the postprocessed double layer potential is not in $\rL^4(\Omega)$, so we compute only $e_{0}(\psi^\sharp):= \|\psi^\sharp_h-\psi_{\mathrm{ex}}\|_{0,\Omega}$. 

The results from the convergence tests are reported in Table 
\ref{tab:convergence-mixed} for  mixed boundary conditions--the Neumann boundary is the right edge of the domain whereas the Dirichlet part is the remainder of the boundary. 
The first part of the table focuses on the smooth case, where we observe optimal convergence for the flux and potential in the natural norms for the mixed regularised problem---we recall that we are only using here the lowest-order case---and we also see the asymptotic second-order convergence of the locally postprocessed solution, as predicted by Theorem~\ref{thm:postprocess}. 

\begin{table}[t!]
    \centering
   {\begin{tabular}{|r|c|c|c|c|c|c|c|}
    \hline
     \multicolumn{8}{|c|}{$g\in C^\infty(\Omega)$ \vphantom{$\int^X_X$}}\\
    \hline
     \texttt{DoFs} & $h$ & $e_{\vdiv_{4/3}}(\bzeta)$ & \texttt{EoC} & $e_{0,4}(\psi)$ & \texttt{EoC} & ${e_{0}(\psi^\sharp)}$ & \texttt{EoC} \\
             \hline
  24 & 0.7071 & 7.28e+00 & $\star$ & 4.89e-01 & $\star$ & 3.49e-01 &$\star$ \\
     88 & 0.3536 & 4.37e+00 & 0.736 & 1.79e-01 & 1.442 & 5.33e-02 & 2.703 \\
   336 & 0.1768 & 1.90e+00 & 1.205 & 8.72e-02 & 1.040 & 1.05e-02 & 2.348 \\
  1312 & 0.0884 & 9.00e-01 & 1.075 & 4.36e-02 & 1.000 & 2.53e-03 & 2.050 \\
  5184 & 0.0442 & 4.80e-01 & 0.908 & 2.18e-02 & 1.000 & 6.36e-04 & 1.991 \\
 20608 & 0.0221 & 2.67e-01 & 0.843 & 1.09e-02 & 1.000 & 1.60e-04 & 1.992 \\
 82176 & 0.0110 & 1.49e-01 & 0.844 & 5.45e-03 & 1.000 & 4.01e-05 & 1.996 \\
 \hline
    \multicolumn{8}{|c|}{$g\in \rH^{-1}(\Omega)$ \vphantom{$\int^X_X$}}\\
    \hline
    \texttt{DoFs} & $h$ & $e_{0}(\bzeta)$ & \texttt{EoC} & $e_{0,4}(\psi)$ & \texttt{EoC} & ${e_{0}(\psi^\sharp)}$ & \texttt{EoC} \\
        \hline 
       24 & 0.7071 & 7.28e-01 & $\star$ & 1.92e-01 & $\star$ & 1.56e-01 & $\star$ \\
    88 & 0.3536 & 6.90e-01 & 0.077 & 9.57e-02 & 1.005 & 5.88e-02 & 1.408 \\
   336 & 0.1768 & 5.03e-01 & 0.458 & 4.48e-02 & 1.096 & 1.76e-02 & 1.738 \\
  1312 & 0.0884 & 3.92e-01 & 0.357 & 2.25e-02 & 0.992 & 5.62e-03 & 1.647 \\
  5184 & 0.0442 & 3.20e-01 & 0.296 & 1.14e-02 & 0.978 & 2.14e-03 & 1.396 \\
 20608 & 0.0221 & 2.65e-01 & 0.268 & 5.79e-03 & 0.980 & 9.23e-04 & 1.212 \\
 82176 & 0.0110 & 2.22e-01 & 0.257 & 2.93e-03 & 0.982 & 4.21e-04 & 1.133 \\
 \hline
    \end{tabular}}
    
\medskip    
\caption{Example 1. Error history for the lowest-order mixed finite element scheme including regularisation through $\cQ$ and postprocessing, when subjected to a smooth and a rough   load that is not in $\rL^2(\Omega)$ (top and bottom, respectively). Here we use mixed boundary conditions.
}
    \label{tab:convergence-mixed}
\end{table}

The bottom part of both tables shows only the $\bL^2$ contribution of the flux error, the potential and postprocessed potential. For these unknowns we see, respectively, approximate rates of convergence of $O(h^{1/4})$, $O(h)$ and $O(h^{5/4})$, which are in complete agreement with the results from \cite[Section 5.2]{fuhrer2024mixed}. In Figure~\ref{fig:ex01} we show the discrete solutions obtained for both smooth and rough loads, indicating accurate approximations of flux and potential, as well as well-resolved postprocessed potential (with sharper gradients). 

\begin{figure}
    \centering
    \includegraphics[width=0.325\linewidth]{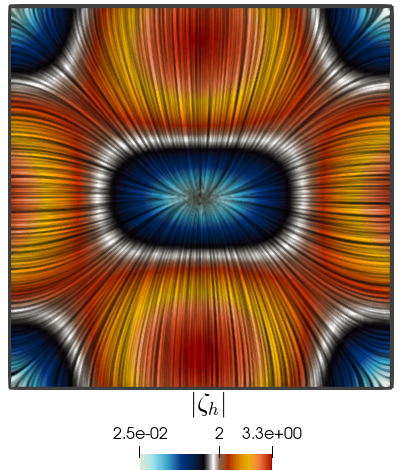}
    \includegraphics[width=0.325\linewidth]{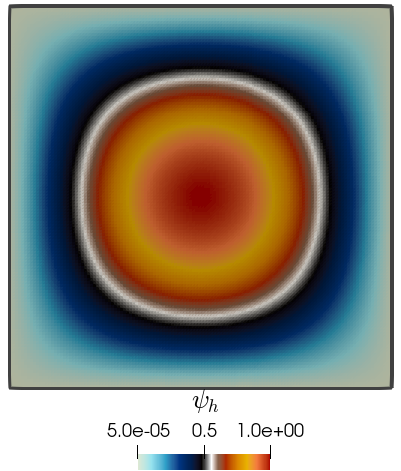}
    \includegraphics[width=0.325\linewidth]{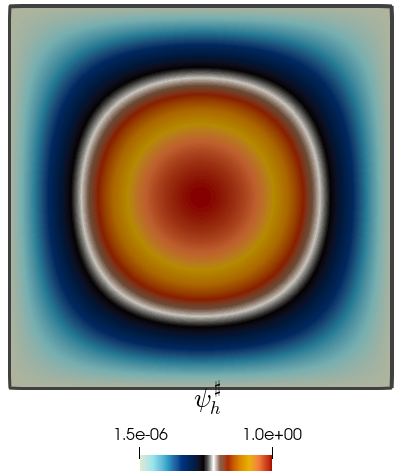}\\
    \includegraphics[width=0.325\linewidth]{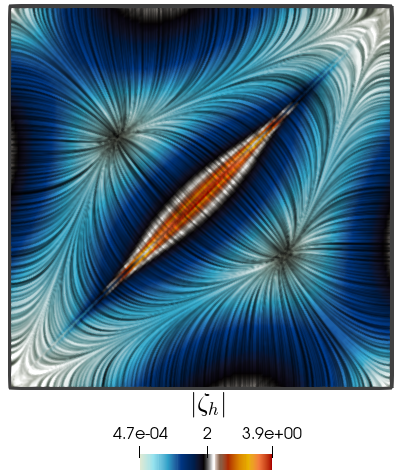}
    \includegraphics[width=0.325\linewidth]{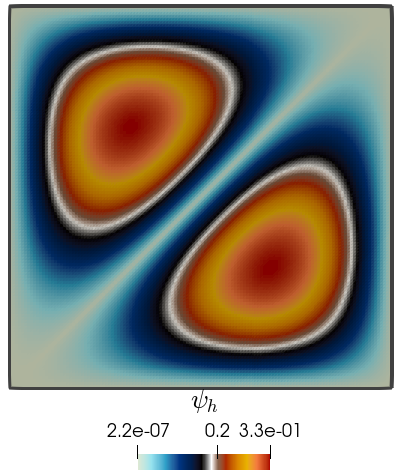}
    \includegraphics[width=0.325\linewidth]{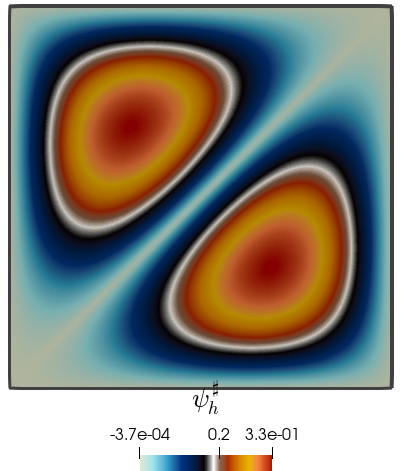}
    \caption{Example 1. Approximate solutions (line integral contours and magnitude for the pseudo potential flux and double layer potential profile) for the lowest-order mixed finite element scheme, and postprocessed potential. We show results using smooth and rough loadings (top and bottom rows, respectively).}
    \label{fig:ex01}
\end{figure}

\medskip
\noindent\textbf{Example 2.} In this test we  examine the behaviour of the postprocess without the regularisation. For that we follow \cite[Section 6.3]{fuhrer2022minres} and use the domain $\Omega=(-1,1)^2$, the given advection velocity $\bu_h$ a projection of \[ \bu_{\mathrm{ex}}(x,y) = \begin{pmatrix} \cos(\frac{\pi}{2} x)\sin(\frac{\pi}{2} y) \\ -\sin(\frac{\pi}{2} x)\cos(\frac{\pi}{2} y)\end{pmatrix},\]
and the manufactured potential 
\[ \psi_{\mathrm{ex}}(x,y) = x|x|^{65/128}(1-x^2)(1-y^2),\]
with the now space-dependent coefficients 
\[\varepsilon(x,y) = \exp(-xy), \quad \kappa(x,y) = \frac12 + \sin(xy)^2. \]
With these functions the load is such $g_{\mathrm{ex}} \in \rL^2(\Omega)$ but $g_{\mathrm{ex}}\notin \rH^s(\Omega)$ for $s \geq \frac{1}{128}$. This smoothness implies that we can also use $g_{\mathrm{ex}}$ directly without $\cQ$. We report on the error history of the case with and without $\cQ$, which confirms an order of convergence similar to the aforementioned reference: $O(h)$ for the $\bL^2(\Omega)$ norm of the flux and for the $\rL^4(\Omega)$ norm of the potential, whereas for the postprocessed solution we obtain approximately $O(h^{5/3})$ and $O(h^2)$ for the methods without and with regularisation $\cQ$, respectively. The regularised method is then appropriate to use even if there is no need to regularise the load.

\begin{table}[t!]
    \centering
   {\begin{tabular}{|r|c|c|c|c|c|c|c|}
    \hline
     \texttt{DoFs} & $h$ & $e_{0}(\bzeta)$ & \texttt{EoC} & $e_{0,4}(\psi)$ & \texttt{EoC} & ${e_{0}(\psi^\sharp)}$ & \texttt{EoC} \\
     \hline
     \multicolumn{8}{|c|}{with load $g_{\mathrm{ex}}$\vphantom{$\int_X$}}\\
             \hline
     24 & 1.4142 & 9.39e-01 & $\star$ & 1.59e-01 & $\star$ & 1.65e-01 & $\star$ \\ 
    88 & 0.7071 & 7.45e-01 & 0.335 & 1.36e-01 & 0.227 & 1.14e-01 & 0.540 \\
   336 & 0.3536 & 4.40e-01 & 0.758 & 7.69e-02 & 0.817 & 3.65e-02 & 1.641 \\
  1312 & 0.1768 & 2.32e-01 & 0.926 & 4.04e-02 & 0.928 & 1.07e-02 & 1.774 \\
  5184 & 0.0884 & 1.18e-01 & 0.971 & 2.05e-02 & 0.983 & 3.19e-03 & 1.743 \\
 20608 & 0.0442 & 5.98e-02 & 0.983 & 1.03e-02 & 0.996 & 9.99e-04 & 1.674 \\
 82176 & 0.0221 & 3.02e-02 & 0.986 & 5.13e-03 & 0.999 & 3.27e-04 & 1.610 \\
             \hline
     \multicolumn{8}{|c|}{with load $\cQ g_{\mathrm{ex}}$\vphantom{$\int_X$}}\\
             \hline
               24 & 1.4142 & 1.02e+00 & $\star$ & 2.49e-01 & $\star$ & 2.81e-01 &$\star$ \\ 
    88 & 0.7071 & 7.13e-01 & 0.517 & 1.61e-01 & 0.629 & 1.56e-01 & 0.850\\ 
   336 & 0.3536 & 4.20e-01 & 0.764 & 8.25e-02 & 0.967 & 5.40e-02 & 1.531 \\
  1312 & 0.1768 & 2.19e-01 & 0.941 & 4.11e-02 & 1.004 & 1.55e-02 & 1.800 \\
  5184 & 0.0884 & 1.12e-01 & 0.969 & 2.06e-02 & 1.001 & 4.14e-03 & 1.904 \\
 20608 & 0.0442 & 5.68e-02 & 0.976 & 1.03e-02 & 1.001 & 1.08e-03 & 1.945 \\
 82176 & 0.0221 & 2.88e-02 & 0.980 & 5.13e-03 & 1.000 & 2.77e-04 & 1.955 \\
\hline
    \end{tabular}}
    
\medskip    
\caption{Example 2. Error history for the lowest-order mixed finite element scheme  subjected to an  $\rL^2(\Omega)$ load and including or not the regularisation through $\cQ$ (top and bottom, respectively). Here we use pure Dirichlet boundary conditions.}
    \label{tab:convergence-2}
\end{table}

\begin{figure}
    \centering
    \includegraphics[width=0.325\linewidth]{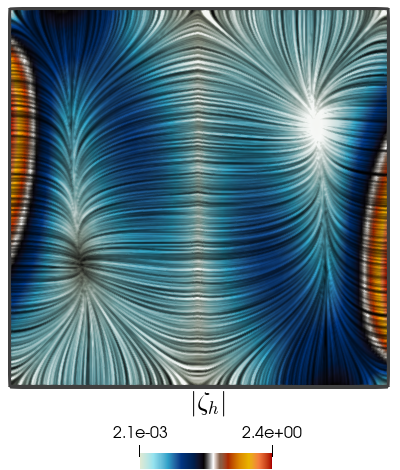}
    \includegraphics[width=0.325\linewidth]{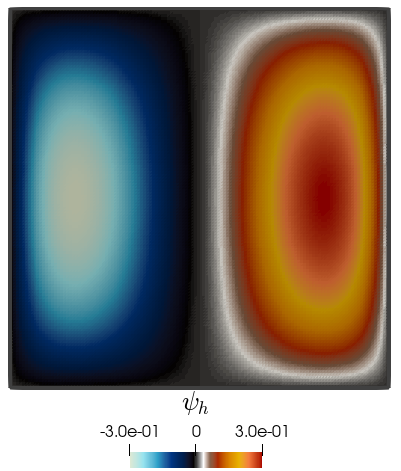}
    \includegraphics[width=0.325\linewidth]{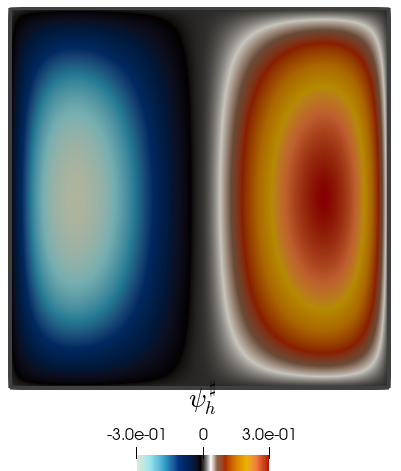}
    \caption{Example 2. Approximate solutions for the lowest-order mixed finite element scheme and postprocessed potential using the regularisation $\cQ$.}
    \label{fig:ex02}
\end{figure}

\medskip 
\noindent\textbf{Example 3.} 
Finally, we address the advection diffusion problem with a line Dirac delta function as the source term, which can be relevant not only in the Poisson--Boltzmann context but also in mono-phasic flow in porous media and tissue perfusion \cite{li2021regularity,cao2025regularity}. We consider the polygonal domain enclosed by the points $(0,0)$, $(1,0)$, $(1.25,0.75)$, $(0.5,1.25)$, $(-0.25,0.75)$ and consider that there is a line fracture $\gamma$ joining the points $(0.4,0.25)$ and $(0.6,0.85)$. We construct a sequence of non-structured triangulations that are conforming with the line fracture, and that are slightly agglomerated near the line. We use a smooth advecting velocity going from left to right with higher magnitude near the domain centre:  
\[ \eta(x,y) = U_0 \sigma \tanh\bigl(\frac{y-\frac12}{\sigma}\bigr) (1+ \theta \sin (2\pi x)), \quad \text{and}\quad \bu(x,y) = \mathrm{rot}\,\eta(x,y),\]
and set the parameters $U_0 =\in\{ 0.25,0.0025\}$, $\sigma = 0.08$, $\theta = 0.12$ and the remaining model coefficients are $\varepsilon = 10^{-3}$ and $\kappa =1$. We use pure Dirichlet boundary conditions setting $\psi = 0$ on $\partial\Omega$. The source term is $\delta_{\gamma}$ and in this case its regularity is $\rH^{-\frac12-\epsilon}(\Omega)$ for $\epsilon >0$. 

Since we do not know the closed-form solution, we use a numerical convergence rate computing errors between approximate solutions and a reference numerical solution (obtained on a finer mesh). The convergence history for this test is reported in Table~\ref{table:line}. The error decay seems to correspond to the behaviour in Example 2,  thanks to the conformity and smoothness of the mesh closer to the line fracture.  We  show snapshots of  the approximate solutions for two advecting intensities $U_0$, as well as an example of the unstructured mesh. The top panels in Figure~\ref{fig:ex03} illustrate how the higher velocity is able to advect the higher potential produced by the source $\delta_{\gamma}$ whereas the second row shows flux and potential that remain concentrated near $\gamma$. 

\begin{table}[t!]
    \centering
\begin{tabular}{|r|c|c|c|c|c|c|c|}
    \hline
     \texttt{DoFs} & $h$ & $e_{0}(\bzeta)$ & \texttt{EoC} & $e_{0,4}(\psi)$ & \texttt{EoC} & $e_{0}(\psi^\sharp)$ & \texttt{EoC} \\
             \hline
       64 & 0.7906 & 8.94e-03 & $\star$ & 2.79e-02 & $\star$ & 1.13e-01 & $\star$\\ 
   179 & 0.3953 & 5.87e-03 & 0.606 & 1.91e-02 & 0.543 & 3.29e-02 & 1.774 \\
   649 & 0.2146 & 3.79e-03 & 0.718 & 1.16e-02 & 0.824 & 1.21e-02 & 1.631 \\
  2778 & 0.1166 & 2.28e-03 & 0.829 & 5.26e-03 & 1.293 & 5.39e-03 & 1.332 \\
 10515 & 0.0622 & 1.20e-03 & 1.026 & 2.20e-03 & 1.383 & 2.23e-03 & 1.402 \\
 40304 & 0.0325 & 5.16e-04 & 1.299 & 7.64e-04 & 1.635 & 7.71e-04 & 1.640 \\
\hline
    \end{tabular}
    
\medskip    
\caption{Example 3. Error history for the lowest-order mixed finite element scheme  subjected to a line source, including   the regularisation through $\cQ$. This test is without closed-form manufactured solution, and it uses pure Dirichlet boundary conditions.}
    \label{table:line}
\end{table}

\begin{figure}[t!]
    \centering
    \includegraphics[width=0.325\linewidth]{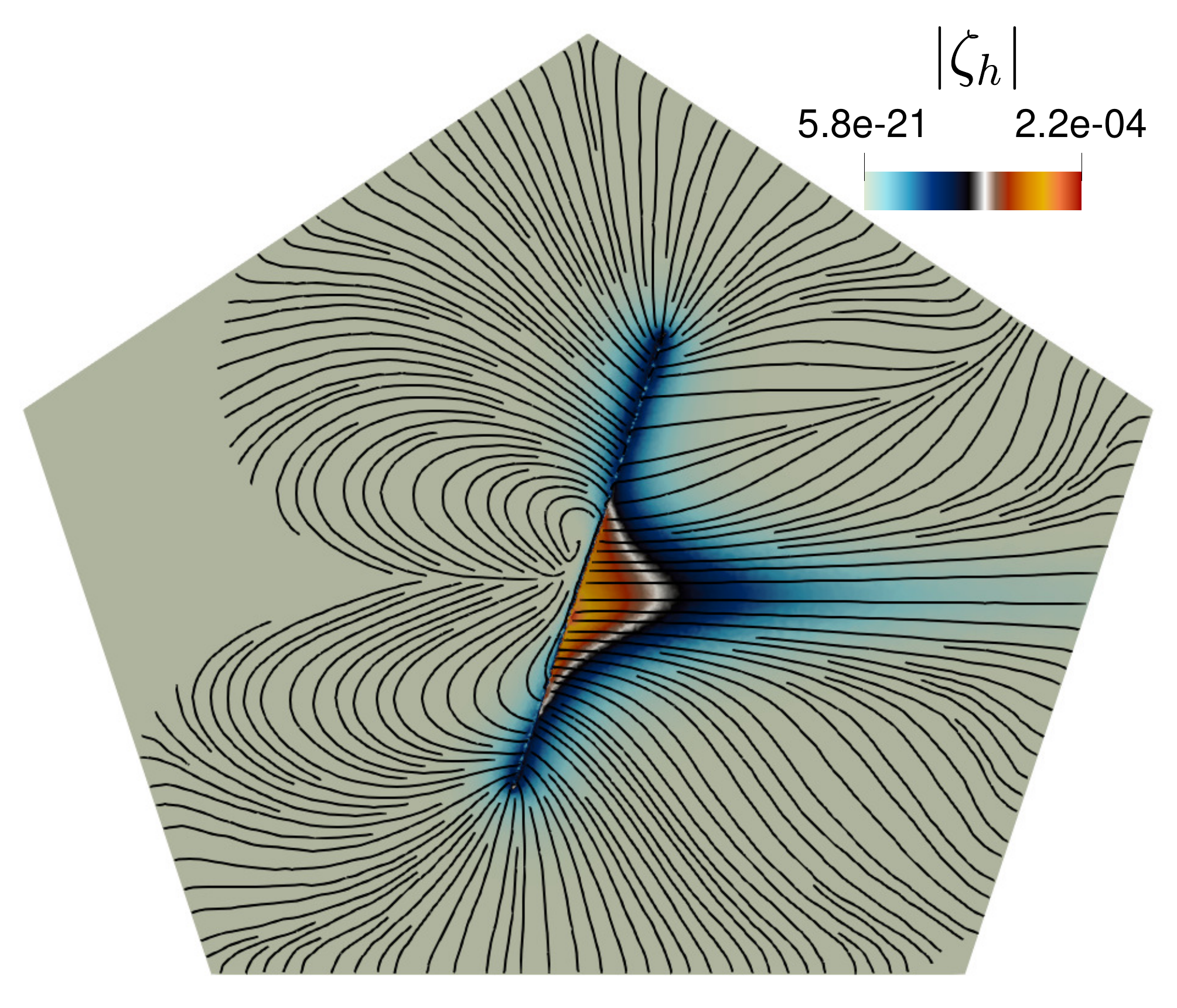}
    \includegraphics[width=0.325\linewidth]{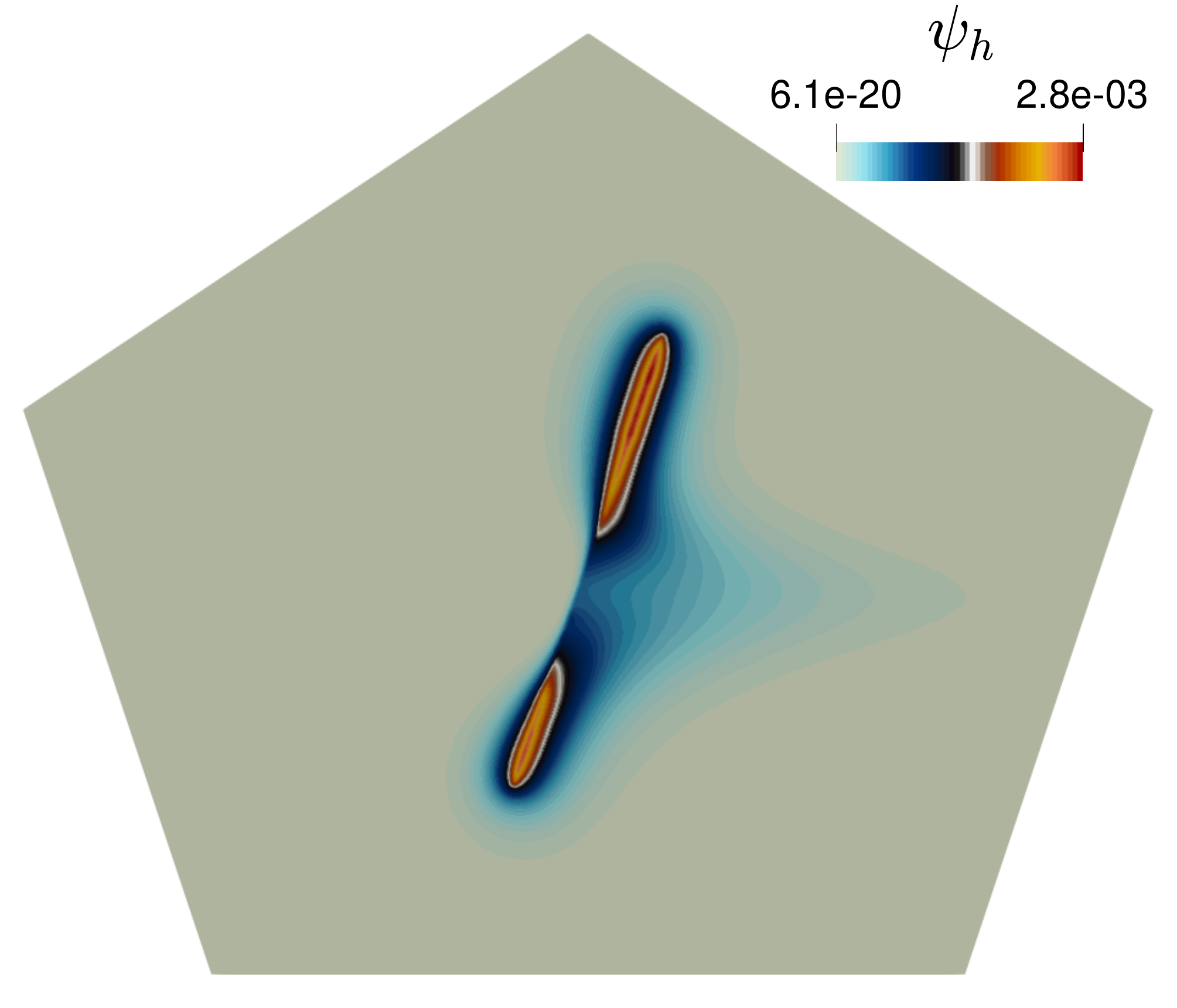}
    \includegraphics[width=0.325\linewidth]{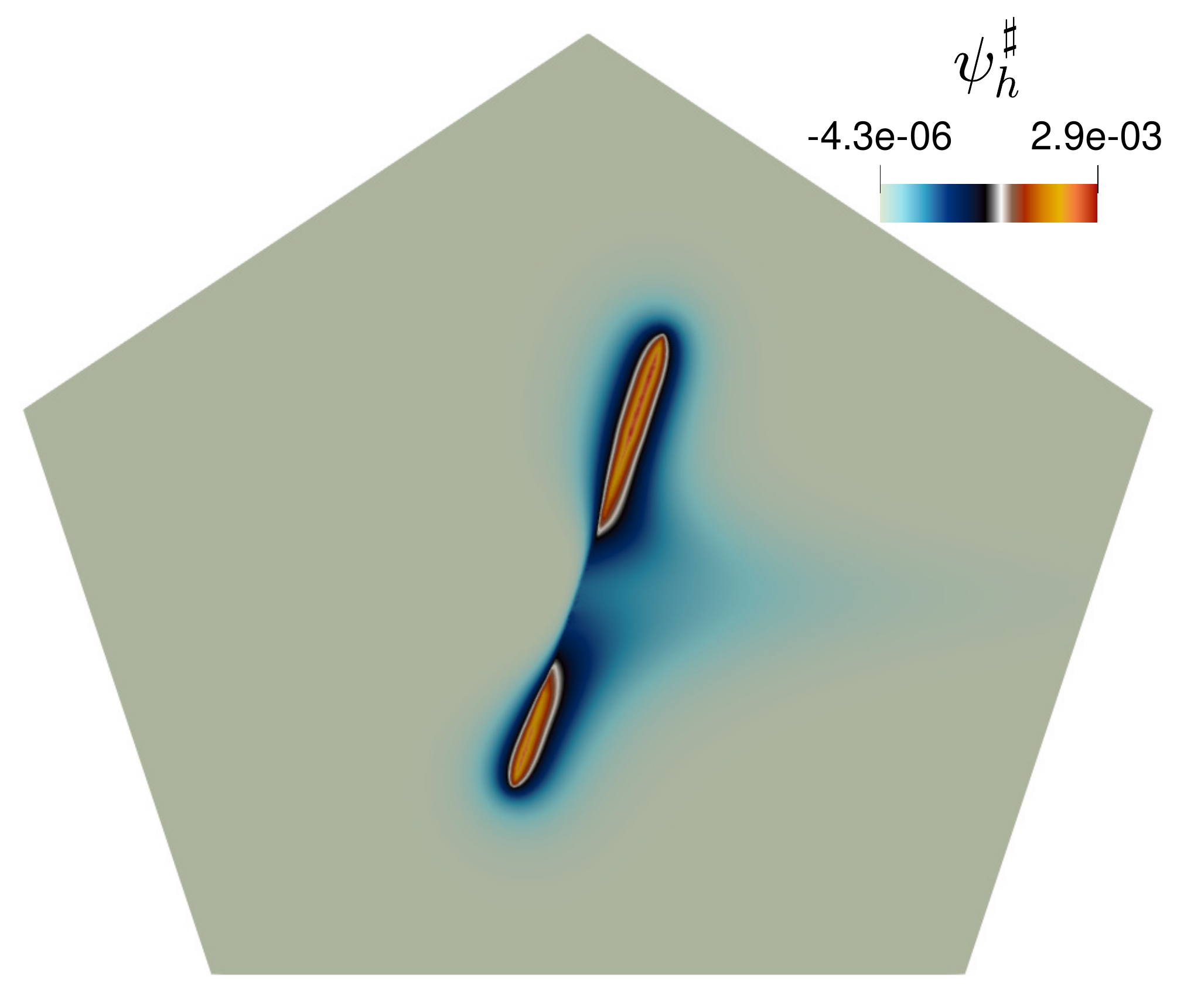}\\
  \includegraphics[width=0.325\linewidth]{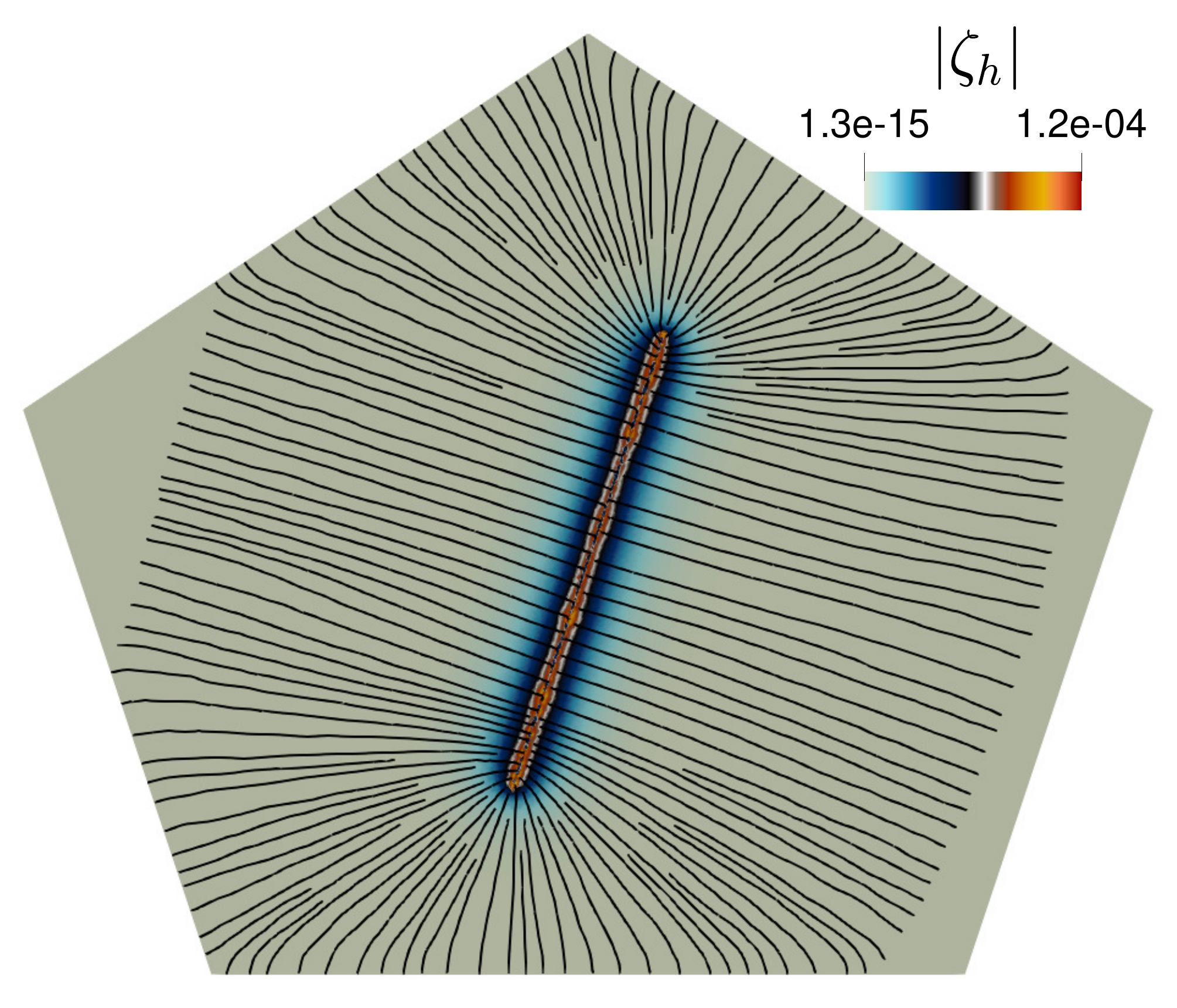}
    \includegraphics[width=0.325\linewidth]{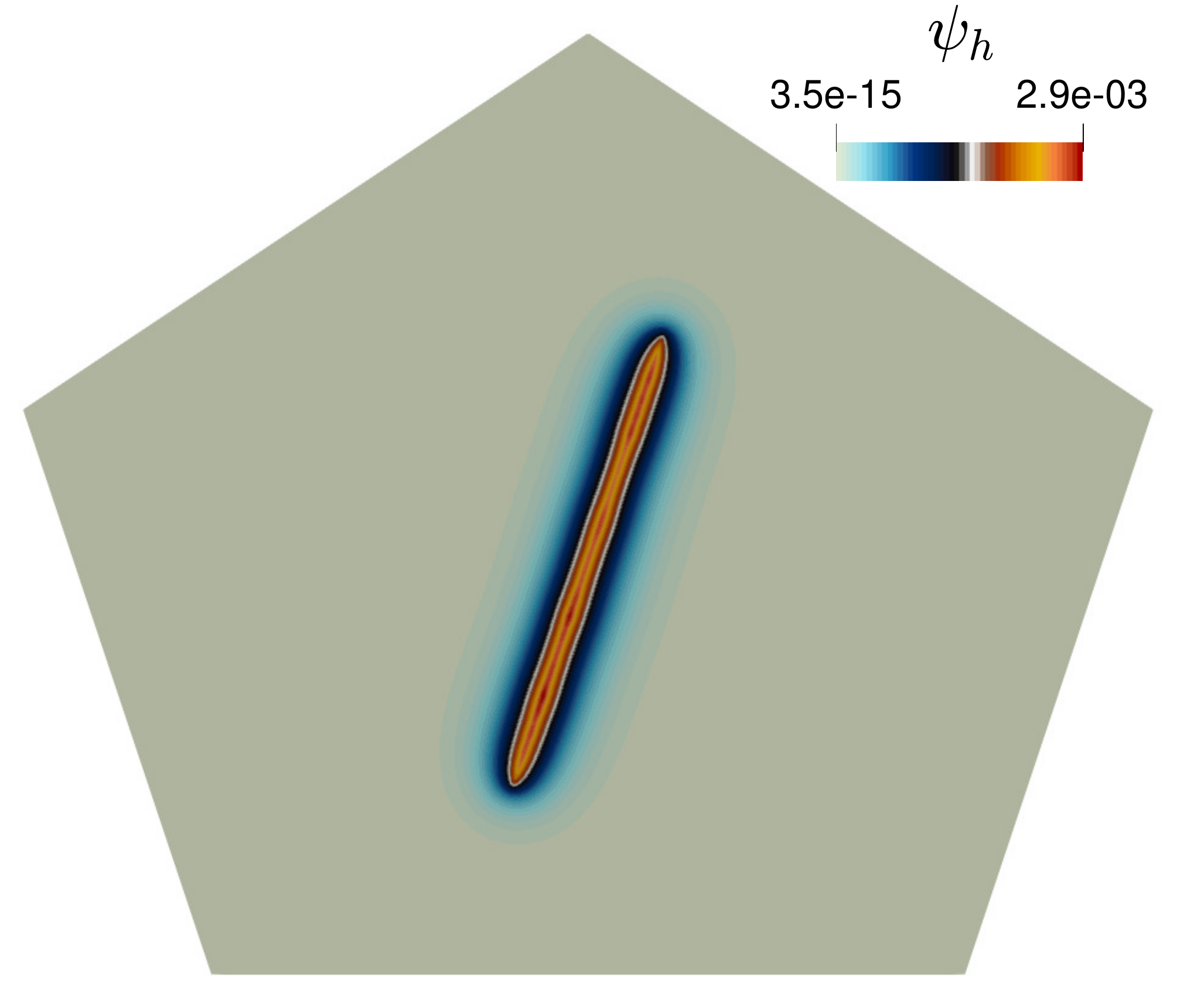}
    \includegraphics[width=0.325\linewidth]{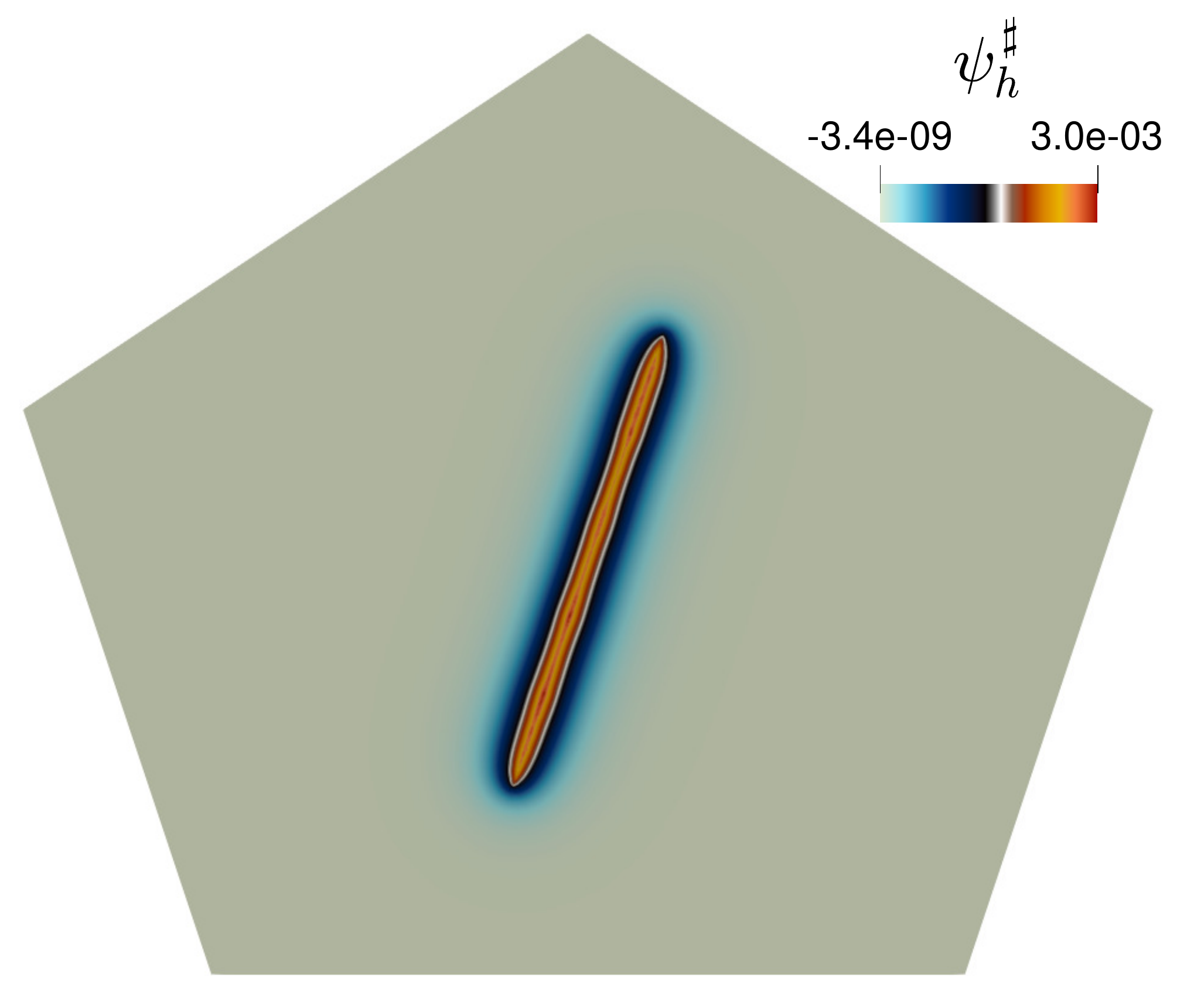}\\[2ex]
    \includegraphics[width=0.325\linewidth]{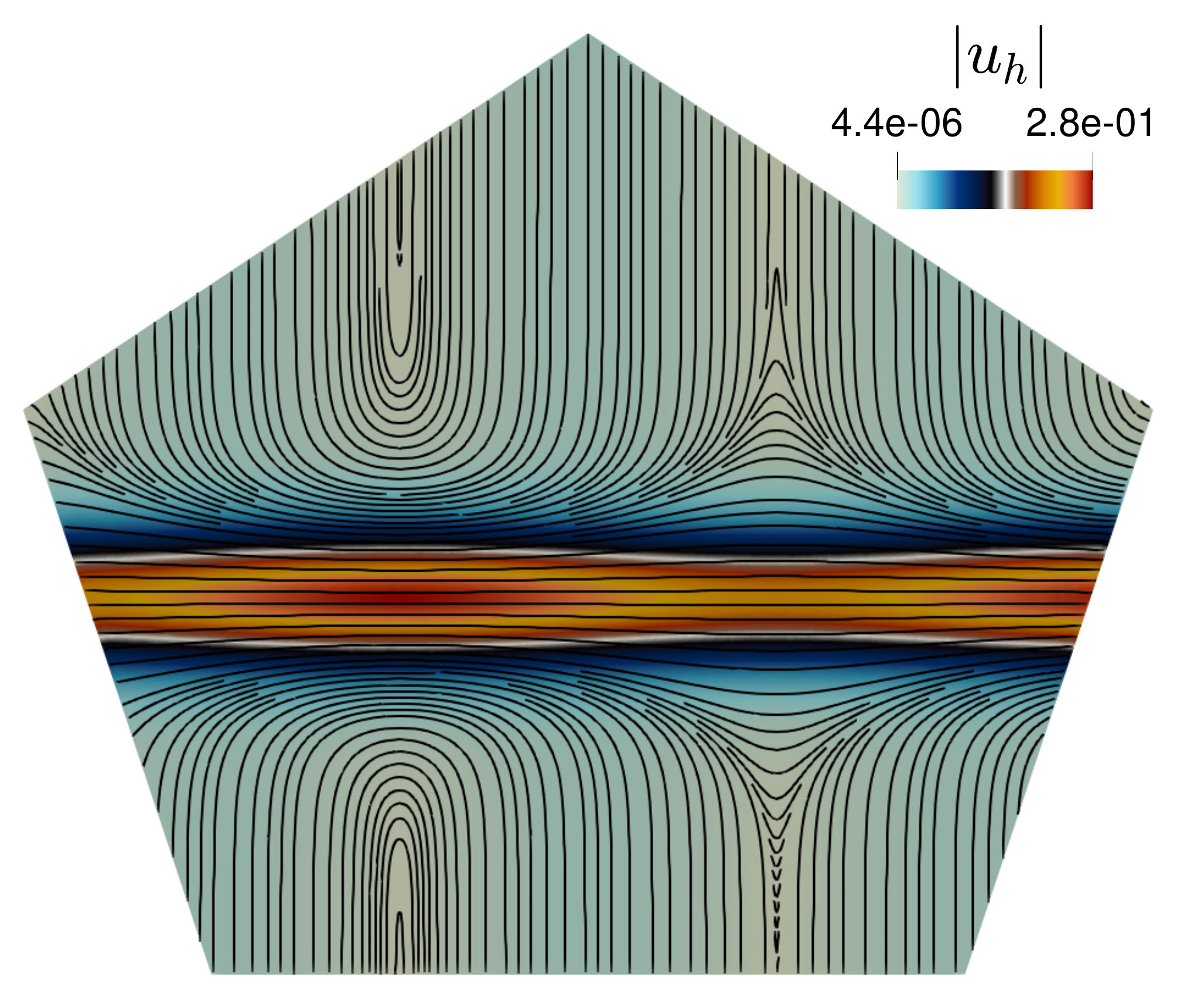}
    \includegraphics[width=0.325\linewidth]{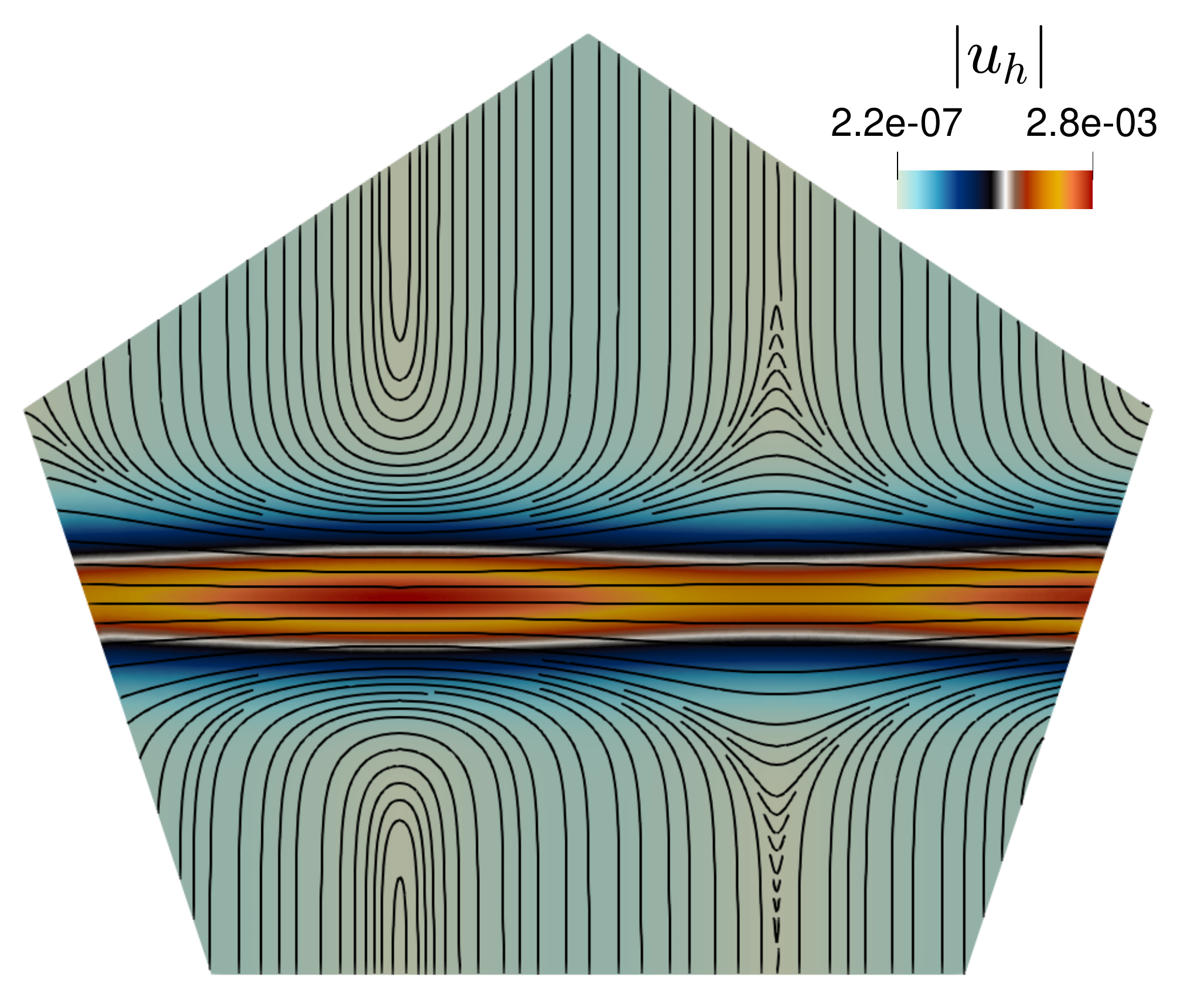}
        \includegraphics[width=0.325\linewidth]{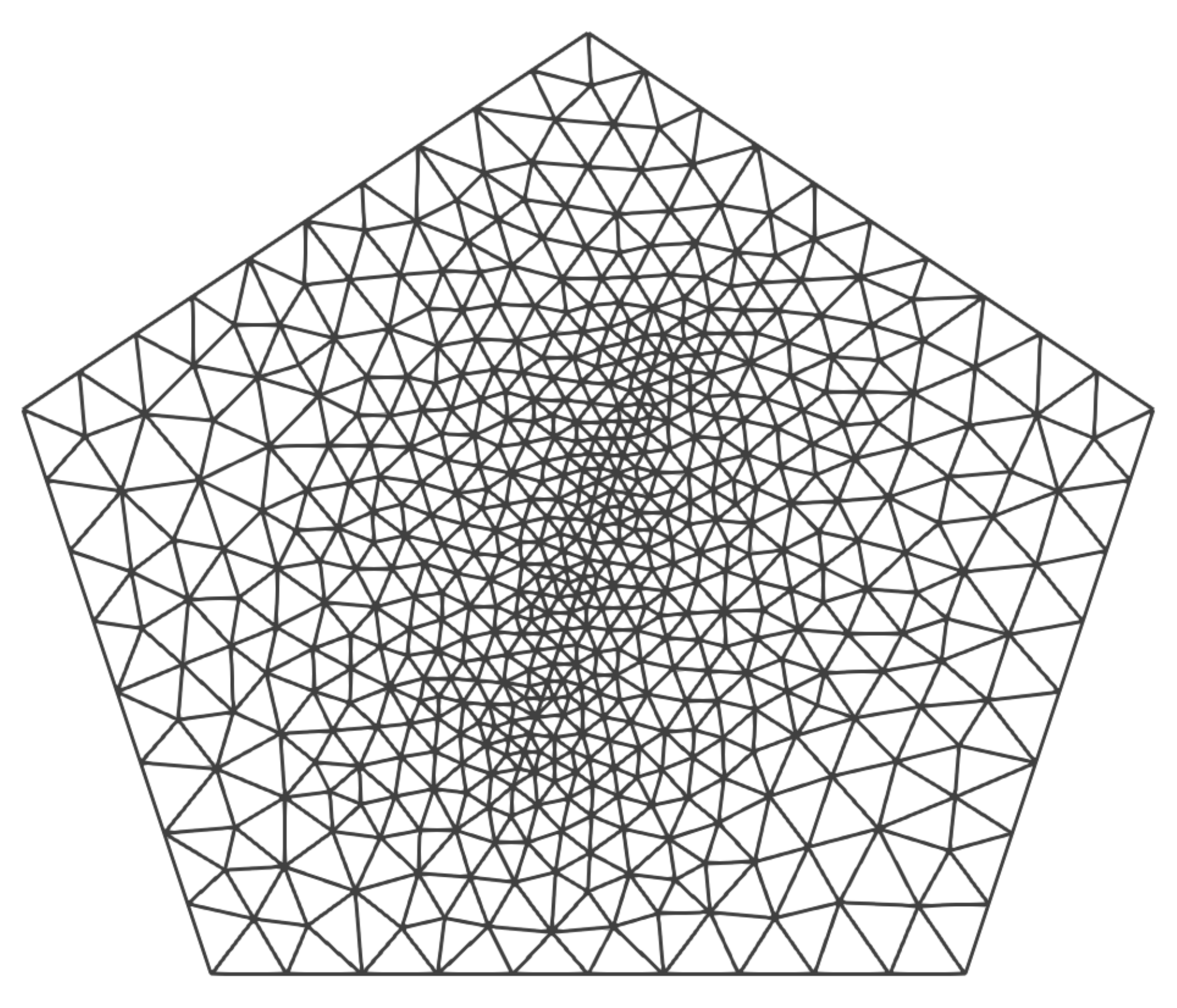}           
    \caption{Example 3. Approximation of pseudo potential flux and streamlines, double layer potential and postprocessed potential for the case of $U_0 = 0.25$ (top row) and $U_0 = 0.0025$ (middle row). The bottom panels show the approximate velocity field and streamlines for the high and low intensity, and a sample coarse mesh after 3 refinements.}
    \label{fig:ex03}
\end{figure}

%%%%%%%%%%%%%%%%%%%%%%%%%%%%%%%%%%%%%
\section{Conclusion}\label{sec:concl}
We have developed a mixed finite element method for the linearised Poisson--Boltzmann equations assuming an advective velocity in $\bL^4(\Omega)$. One of the challenges is to derive error estimates when the load is in $\rH^{-1}(\Omega)$, and this is tackled using a quasi-interpolation operator, here conveniently redefined for $\rL^p(\Omega)$ spaces and mixed boundary conditions. The continuous and discrete problems were shown to be well-posed and we established quasi-optimality and convergence rates. A postprocess of the double layer potential allows us to get superconvergence. 

For us, a natural extension to this work is to perform the full coupling analysis with the Navier--Stokes equations (see, for example, \cite{caucao2020new,correa24,Colmenares20,Camano21}), and to incorporate the unbounded nonlinearity in the reaction function $\kappa$ as done for the primal coupling in \cite{alsohaim25}. This requires to add box constraints in the double layer potential space, and to use a fixed-point structure to handle a perturbed saddle-point problem when the nonlinearity is in the lower-diagonal block.  We will also address a posteriori error estimates as in \cite{cao2025regularity}, and  other types of singular forcing terms including Dirac point sources following \cite{koppl2014optimal,drelichman2020weighted}.

%%%%%%%%%%%%%%%%%%%%%%%
\smallskip 
\subsection*{Acknowledgements} This research has been supported by   ANID through the FONDECYT project 1210391 (TF) and \textit{Concurso de Subvenci\'on a la Instalaci\'on en la Academia, convocatoria 2025}, Project number 85250057 (SVF); by the Australian Research Council through the \textit{Future Fellowship} grant FT220100496 (RRB); by the Swedish Research Council under grant no. 2021-06594, Institut Mittag--Leffler in Djursholm, Sweden (RRB); and by the Center of Advanced Study (CAS) at the Norwegian Academy of Science and Letters under the program \textit{Mathematical Challenges in Brain Mechanics} (RRB). 
%%%%%%%%%%%%%%%%%%%%%%

\bibliographystyle{siam}
\bibliography{refs}
\end{document}